\DeclareSymbolFont{cyrletters}{OT2}{wncyr}{m}{n}
\DeclareMathSymbol{\Be}{\mathalpha}{cyrletters}{"42}                          
\DeclareMathSymbol{\Che}{\mathalpha}{cyrletters}{"51}                          
\DeclareMathSymbol{\Sha}{\mathalpha}{cyrletters}{"58}                          
\DeclareRobustCommand\widecheck[1]{{\mathpalette\@widecheck{#1}}}
\def\@widecheck#1#2{%
    \setbox\z@\hbox{\m@th$#1#2$}%
    \setbox\tw@\hbox{\m@th$#1%
       \widehat{%
          \vrule\@width\z@\@height\ht\z@
          \vrule\@height\z@\@width\wd\z@}$}%
    \dp\tw@-\ht\z@
    \@tempdima\ht\z@ \advance\@tempdima2\ht\tw@ \divide\@tempdima\thr@@
    \setbox\tw@\hbox{%
       \raise\@tempdima\hbox{\scalebox{1}[-1]{\lower\@tempdima\box
\tw@}}}%
    {\ooalign{\box\tw@ \cr \box\z@}}}
\theoremstyle{plain}      \newtheorem{thm}{Theorem}[section]                   
\theoremstyle{plain}                   
\theoremstyle{plain}      \newtheorem{lem}[thm]{Lemma}                         
\theoremstyle{plain}                             
\theoremstyle{plain}      \newtheorem{cor}[thm]{Corollary}                     
\theoremstyle{plain}                         
\theoremstyle{plain}      \newtheorem{prop}[thm]{Proposition}                  
\theoremstyle{plain}      \newtheorem{conjecture}[thm]{Conjecture}             
\theoremstyle{definition} \newtheorem{rmk}[thm]{Remark}                        
\theoremstyle{definition}                      
\theoremstyle{definition} \newtheorem{df}[thm]{Definition}                     
\theoremstyle{definition}                   
\theoremstyle{definition} \newtheorem{eg}[thm]{Example}                        
\theoremstyle{definition}                        
\theoremstyle{definition}                        
\theoremstyle{definition}                      
\theoremstyle{definition}                    
\theoremstyle{definition}                  
\theoremstyle{definition}                        
\theoremstyle{definition}                       
\theoremstyle{definition}                    
\theoremstyle{definition}                  
\theoremstyle{definition} \newtheorem{construction}[thm]{Construction}         
\theoremstyle{definition}              
\theoremstyle{definition}                  
\theoremstyle{definition} \newtheorem{prop-df}[thm]{Proposition-Definition}    
\theoremstyle{definition} \newtheorem{thm-df}[thm]{Theorem-Definition}
\theoremstyle{definition}
\newtheorem*{construction*}{Construction}                                      
\newtheorem*{conjecture*}{Conjecture}                                          
\newtheorem*{hypothesis*}{Hypothesis}                                          
\newtheorem*{convention*}{Convention}                                          
\newtheorem*{notation*}{Notation}                                              
\newtheorem*{prop*}{Proposition}                                               
\newtheorem*{summary*}{Summary}                                                
\newtheorem*{qt*}{Question}                                                    
\newtheorem*{rmk*}{Remark}                                                     
\newtheorem*{fact*}{Fact}                                                      
\newtheorem*{lizi*}{Example}                                                   
\newtheorem*{df*}{Definition}                                                  
\theoremstyle{plain}
\newtheorem*{thm*}{Theorem}                                                    
\newcommand{\bconst}{\begin{construction}}                                     
\newcommand{\econst}{\end{construction}}                                       
\newcommand{\benum}{\begin{enumerate}[label={{\upshape(\alph*)}}]}             
\newcommand{\benuma}{\begin{enumerate}[label={{\upshape(\arabic*)}}]}          
\newcommand{\benumr}{\begin{enumerate}[label={{\upshape(\roman*)}}]}           
\newcommand{\eenum}{\end{enumerate}}
\newcommand{\bconj}{\begin{conjecture}}
\newcommand{\econj}{\end{conjecture}}
\newcommand{\bconjnn}{\begin{conjecture*}}
\newcommand{\econjnn}{\end{conjecture*}}
\newcommand{\begs}{\begin{eg}\hfill\benuma}                                    
\newcommand{\eegs}{\eenum\end{eg}}                                             
\newcommand{\brmks}{\begin{rmk}\hfill\benuma}                                  
\newcommand{\ermks}{\eenum\end{rmk}}                                           
\newcommand{\bdfs}{\begin{df}\hfill\benuma}                                    
\newcommand{\edfs}{\eenum\end{df}}                                             
\newcommand{\bitem}{\begin{itemize}}                                           
\newcommand{\eitem}{\end{itemize}}                                             
\newcommand{\be}{\begin{equation}}                                             
\newcommand{\ee}{\end{equation}}                                               
\newcommand{\benn}{\begin{equation*}}                                          
\newcommand{\eenn}{\end{equation*}}                                            
\newcommand{\bqt}{\begin{qt*}\rm}                                              
\newcommand{\eqt}{\end{qt*}}                                                   
\newcommand{\bqtr}{\begin{qt*}\rm\coLR}                                        
\newcommand{\eqtr}{\end{qt*}}                                                  
\newcommand{\beac}{\begin{equation}\begin{array}{c}}                           
\newcommand{\eeac}{\end{array}\end{equation}}                                  
\newcommand{\beqn}{\begin{eqnarray*}}
\newcommand{\eeqn}{\end{eqnarray*}}
\newcommand{\bdf}{\begin{df}}
\newcommand{\bdfhf}{\begin{df}\hfill}
\newcommand{\edf}{\end{df}}
\newcommand{\brmk}{\begin{rmk}}
\newcommand{\brmkhf}{\begin{rmk}\hfill}
\newcommand{\ermk}{\end{rmk}}
\newcommand{\BA}{\mathbf{A}}
   \newcommand{\BL}{\mathbf{L}}
   \newcommand{\BP}{\mathbf{P}}
  \newcommand{\CH}{\mathcal{H}}
\newcommand{\BBC}{\mathbb{C}}  
\newcommand{\BBG}{\mathbb{G}}
\newcommand{\BBQ}{\mathbb{Q}}
  \newcommand{\BBZ}{\mathbb{Z}}
\renewcommand{\P}{\mathbb{P}}                                                  
\newcommand{\Z}{\mathbb{Z}}                                                    
\newcommand{\QZ}{\mathbb{Q}/\mathbb{Z}}                                        
\newcommand{\Qp}{\mathbb{Q}_{p}}                                               
\newcommand{\Zp}{\mathbb{Z}_{p}}                                               
\newcommand{\coLR}{\textcolor[rgb]{1.00,0,0}}                                  
\DeclareMathOperator{\lcm}{lcm}	
\DeclareMathOperator{\ssym}{SSym}		
\DeclareMathOperator{\Hom}{Hom}                                                
\DeclareMathOperator{\ext}{Ext}                                                
\DeclareMathOperator{\Div}{Div}                                                
\DeclareMathOperator{\Pic}{Pic}                                                
\DeclareMathOperator{\br}{Br}                                                  
\DeclareMathOperator{\Image}{Im}                                               
\renewcommand{\Im}{\Image}                                                     
\DeclareMathOperator{\Ker}{Ker}                                                
\renewcommand{\ker}{\Ker}                                                      
\DeclareMathOperator{\Id}{Id}                                                  
\DeclareMathOperator{\Nrd}{Nrd}                                                
\DeclareMathOperator{\Spec}{Spec}                                              
\DeclareMathOperator{\Supp}{Supp}                                              
\newcommand{\tors}{\mathrm{tors}}                                              
\newcommand{\cts}{\mathrm{cont}}                                               
\newcommand{\nr}{{\mathrm{nr}}}                                                
\newcommand{\Zar}{\mathrm{Zar}}                                                
\newcommand{\et}{\mathrm{\acute{e}t}}                                          
\DeclareMathOperator{\Inv}{Inv}                                                
\DeclareMathOperator{\CHOW}{CH}                                                
\newcommand{\gm}{\BBG_m}                                                       
\DeclareMathOperator{\GL}{\mathbf{GL}}                                         
\DeclareMathOperator{\SL}{\mathbf{SL}}                                         
\DeclareMathOperator{\PGL}{\mathbf{PGL}}                                       
\DeclareMathOperator{\SU}{\mathbf{SU}}                                         
\DeclareMathOperator{\Sp}{\mathbf{Sp}}                                         
\DeclareMathOperator{\Res}{Res}                                                
\DeclareMathOperator{\Inf}{Inf}                                                
\DeclareMathOperator{\Cor}{Cor}   
\newcommand{\hz}{H_{\Zar}}                                                     
\newcommand{\rmnorm}{\mathrm{norm}}
\theoremstyle{definition}
\newcommand\addtag{\refstepcounter{equation}\tag{\theequation}}
\definecolor{tianred}{rgb}{0.57, 0.36, 0.51}                                   
\definecolor{tianblue}{rgb}{0.0, 0.22, 0.66}                                   
\definecolor{tianpink}{rgb}{0.88, 0.56, 0.59}                                  
\definecolor{tiangreen}{rgb}{0.24, 0.82, 0.44}                                 
\definecolor{col1}{HTML}{74664D}
\definecolor{col3}{HTML}{CEA964}
\definecolor{col2}{HTML}{857251}
\definecolor{col4}{HTML}{F7F4EF}
\title{Reciprocity obstruction to strong approximation over $p$-adic function fields}
\author{Haowen Zhang}
\date{}
\begin{document}
\maketitle
\begin{abstract}
Over function fields of $p$-adic curves, we construct stably rational varieties in the form of homogeneous spaces of $\SL_n$ with semisimple simply connected stabilizers and we show that strong approximation away from a non-empty set of places fails for such varieties. The construction combines the Lichtenbaum duality and the degree $3$ cohomological invariants of the stabilizers. We then establish a reciprocity obstruction which accounts for this failure of strong approximation. We show that this reciprocity obstruction to strong approximation is the only one for counterexamples we constructed, and also for classifying varieties of tori. We also show that this reciprocity obstruction to strong approximation is compatible with known results for tori. At the end, we explain how a similar point of view shows that the reciprocity obstruction to weak approximation is the only one for classifying varieties of tori over $p$-adic function fields.
\end{abstract}
\section{Introduction}
In recent years, there has been growing interest in studying the arithmetic of algebraic groups and their homogeneous spaces defined over fields of cohomological dimension strictly greater than $2$, and in particular, function fields $K$ of $p$-adic curves. For example, in \cite{harari2013local} and \cite{harari2015weak}, Harari, Szamuely and Scheiderer studied local-global principle and weak approximation questions for tori over such fields $K$, with respect to the valuations coming from the codimension $1$ points of the $p$-adic curve. For both questions, they showed that the reciprocity obstruction is the only one. Tian generalized their results on weak approximation to quasi-split reductive groups in his work \cite{tian2021obstructions}. Very recently in \cite{linh2022arithmetics}, Linh studied homogeneous spaces of $\SL_n$ with toric stabilizers, and more generally, stabilizers in the form of an extension of a group of multiplicative type by a unipotent group. He showed that the reciprocity obstruction is the only one to both the local-global principle and weak approximation. 
\par Being a generalized form of the Chinese Remainder Theorem, strong approximation is a natural question in the study of the arithmetic of algebraic groups and their homogeneous spaces. For tori over $p$-adic function fields, Harari and Izquierdo described the defect of strong approximation in \cite{harari2019espace}, based on a Poitou-Tate type exact sequence obtained in \cite{harari2015weak}.
\par We study strong approximation questions for homogeneous spaces over $p$-adic function fields. Let $K$ be the function field of a smooth proper geometrically integral curve $X$ defined over a $p$-adic field $k.$ We first show that we have very different behavior of strong approximation over $K$ compared to number fields:
\begin{thm}There are homogeneous spaces of the form $Z=\SL_n/H$ with $H$ semisimple simple connected such that strong approximation away from a non-empty set $S\subseteq X^{(1)}$ does not hold for $Z$.
\end{thm}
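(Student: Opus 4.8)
The plan is to exploit the fibration $1\to H\to \SL_n\to Z\to 1$ together with the fact that $\SL_n$ is a \emph{special} group, so that $H^1(F,\SL_n)=1$ for every field extension $F/K$. Consequently, for every place $v\in X^{(1)}$ the connecting map $\partial_v\colon Z(K_v)\to H^1(K_v,H)$ is surjective, and it is compatible with the global connecting map $\partial\colon Z(K)\to H^1(K,H)$ under localization. I would then compose with the degree $3$ (Rost) invariant $r_F\colon H^1(F,H)\to H^3(F,\QZ(2))$ of the semisimple simply connected group $H$, obtaining for each $v$ a map $\theta_v=\inv_v\circ r_{K_v}\circ\partial_v\colon Z(K_v)\to\QZ$, where $\inv_v\colon H^3(K_v,\QZ(2))\xrightarrow{\sim}\QZ$ is the local invariant attached to the codimension $1$ point $v$ (whose completion $K_v$ is a complete discretely valued field with $p$-adic residue field). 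Since $H^1(K_v,H)$ is discrete, each $\theta_v$ is locally constant, hence continuous for the $v$-adic topology on $Z(K_v)$, and $\theta_v$ vanishes on integral points at almost all $v$, so that $\Theta=\sum_v\theta_v$ is a well-defined continuous map on the restricted product $\rprod{v\notin S}Z(K_v)$.

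The second ingredient is the reciprocity law furnished by Lichtenbaum duality over the $p$-adic curve $X$: the localization sequence
\[
H^3(K,\QZ(2))\longrightarrow\bigoplus_{v\in X^{(1)}}H^3(K_v,\QZ(2))\xrightarrow{\ \sum_v\inv_v\ }\QZ
\]
is a complex, i.e.\ every global class $\xi\in H^3(K,\QZ(2))$ satisfies $\sum_v\inv_v(\mathrm{loc}_v\,\xi)=0$ (almost all terms vanishing). Applying this to $\xi=r_K(\partial z)$ for a rational point $z\in Z(K)$, and using functoriality of the Rost invariant and of the connecting maps under localization, gives $\sum_v\theta_v(z_v)=0$ for the diagonal image $(z_v)$ of $z$. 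Thus the continuous map $\Theta$ kills the closure of the image of $Z(K)$ in $\rprod{v\notin S}Z(K_v)$; this is precisely the reciprocity obstruction to strong approximation.

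To produce the counterexample it then suffices to exhibit an adelic point on which $\Theta$ is nonzero. The hard part is to choose $H$ so that its Rost invariant is nontrivial after localization: I would select $H$ of a type for which $r_{K_v}\colon H^1(K_v,H)\to H^3(K_v,\QZ(2))$ has nonzero image for suitable $v$, which is exactly where the degree $3$ cohomological invariants of $H$ enter. Using the surjectivity of $\partial_v$ together with this nonvanishing, I would pick finitely many places $v_1,\dots,v_r$ and local points $z_{v_i}\in Z(K_{v_i})$ so that $\sum_i\inv_{v_i}\!\big(r(\partial z_{v_i})\big)\neq 0$, while at all remaining places choosing integral local points with vanishing invariant. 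This assembles into an adelic point $(z_v)\in\rprod{v\notin S}Z(K_v)$ with $\Theta\big((z_v)\big)\neq 0$, which therefore lies outside the closure of $Z(K)$; hence strong approximation away from the nonempty $S$ fails for $Z$.

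I expect the main obstacle to be twofold. First, one must verify that the composite $\inv_v\circ r_{K_v}\circ\partial_v$ genuinely attains a nonzero value in $\QZ$ for at least one place, and that the resulting values can be made to have nonzero sum; this requires controlling the image of the Rost invariant over the two-dimensional local fields $K_v$ and confirming its compatibility with the duality pairing. Second, the construction must respect integrality: the chosen local points have to be integral for a good integral model of $Z$ outside a finite set, so that $(z_v)$ is a bona fide adelic point and $\Theta$ is well defined, and the local invariants must vanish at the remaining places. Pinning down the precise local reciprocity pairing coming from Lichtenbaum duality and proving its compatibility with the degree $3$ invariant of $H$ is the technical heart of the argument.
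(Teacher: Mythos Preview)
Your strategy is essentially the paper's: exploit the surjective connecting map $Z(K_v)\to H^1(K_v,H)$ coming from speciality of $\SL_n$, compose with the Rost invariant and the local isomorphism $H^3(K_v,\QZ(2))\simeq\QZ$, and invoke the generalized Weil reciprocity complex to obstruct rational points. The paper carries this out with the concrete choice $H=\SL_1(A)$ for a central simple algebra with $[A]\in\br X$, so that the Rost invariant is the explicit cup-product $(x)\cup[A]$; the composite $f_H$ is then computed in terms of the Lichtenbaum pairing $\psi\colon\br X\times\Pic X\to\QZ$ (Proposition~\ref{comp}), which is what makes the nonvanishing checkable in practice.

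There is, however, a genuine gap in how you handle $S$. You set $\Theta=\sum_{v\notin S}\theta_v$ and claim that $\Theta$ kills the image of $Z(K)$. But reciprocity only yields $\sum_{v\in X^{(1)}}\theta_v(z)=0$ for $z\in Z(K)$; restricting the sum to $v\notin S$ leaves $\Theta(z)=-\sum_{v\in S}\theta_v(z)$, which need not vanish. Consequently, exhibiting an adelic point $(z_v)_{v\notin S}$ with $\Theta((z_v))\neq 0$ is not yet an obstruction: you must show that this value does \emph{not} lie in the subgroup of $\QZ$ swept out by $\sum_{v\in S}\theta_v$ on $\prod_{v\in S}Z(K_v)$. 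This comparison of images is exactly the hypothesis of Theorem~\ref{thm failure of sa}, namely
\[
\{\psi([A],z):\Supp(z)\subseteq X^{(1)}\setminus S\}\not\subseteq\{\psi([A],z):\Supp(z)\subseteq S\},
\]
and Example~\ref{example} arranges it by taking $[A]$ to come from $\br k$ of exponent $2$ and choosing $S$ to contain only even-degree points, so that the $S$-contributions vanish while a degree-one point outside $S$ contributes $\tfrac12$. Without this step your argument does not close; the alternative, used later in \S\ref{section def rec obs}, is to pair only against invariants that become trivial at every $v\in S$.
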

Our constructions include $H$ of inner type $\mathsf A$ (Theorem \ref{thm failure of sa}, Corollary \ref{cor failure SA SLn/SL1A A constant} and Example \ref{example}), and outer type $\mathsf A$ (Corollary \ref{cor outer type} and Example \ref{example outer type}). 
\par For $H$ of inner type $\mathsf A$, we have $H=\SL_1(A)$ for a simple central $K$-algebra $A$ and $Z=\SL_n/H$ is a stably rational $K$-variety. We show that the maps $H^1(K_v,H)\rightarrow H^3(K_v,\QZ(2))$ given by the Rost invariant composed with the sum in the complex of the generalized Weil reciprocity law can be calculated in terms of the Lichtenbaum duality pairing $\br X\times \Pic X\rightarrow \QZ$ induced by evaluation on closed points. As a consequence, we give explicit descriptions of $H^1(K_v,H)$, which are finite cyclic groups, and conditions under which the map $H^1(K,H)\rightarrow \bigoplus_{v\in X^{(1)}\backslash S}H^1(K_v,H)$ is not surjective, yielding counter-examples to strong approximation for $\SL_n/H$. 
\par For $H$ of outer type $\mathsf A$, we have $H=\SU(A,\tau)$ for a central simple $L$-algebra $A$ with $L/K$-unitary involution $\tau$, where $L/K$ is a separable quadratic extension. We describe the images of $H^1(K_v,H)$ under the Rost invariant with respect to different ramification types, and we give counter-examples to strong approximation for $\SL_n/H$ based on calculations in the case of inner type $\mathsf A$.
\par To account for such failure, we define a reciprocity obstruction. We use the group $H^3_{\nr}(Z,\QZ(2))$, which is the subgroup of elements in $H^3(K(Z),\QZ(2))$ that are unramified with respect to all codimension $1$ points of $Z$. We define the subgroup of ``trivial on $S$'' elements
\begin{equation*}H^3_{\nr,S}(Z,\QZ(2)):=\ker(H^3_\nr(Z,\QZ(2))
\rightarrow\prod_{v\in S}H^3_\nr(Z_{K_v},\QZ(2)))\end{equation*}
and we denote by $\overline H^3_{\nr,S}(Z,\QZ(2))$ its quotient by constant elements. We define a pairing
\[Z(\BA_K^S)\times \overline H^3_\nr(Z,\QZ(2))\rightarrow\QZ,\quad ((x_v)_{v\notin S},\alpha)\mapsto\sum_{v\notin S}\alpha(x_v).\]
The subset of elements in $Z(\BA_K^S)$ that are orthogonal to $\overline H^3_{\nr,S}(Z,\QZ(2))$ contains $Z(K)$ in virtue of the generalized Weil reciprocity law, and also its closure $\overline{Z(K)}^S$ by continuity of the pairing, giving rise to a reciprocity obstruction to strong approximation away from $S$.
\par When $Z$ is of the form $\SL_n/H$, we show that we can also define a pairing using $\Inv^3(H,\QZ(2))_\rmnorm$, the normalized degree $3$ cohomological invariants of $H$ with coefficients in $\QZ(2)$, and we have the following compatibility given be the commutative diagram 
\[
\begin{tikzcd}
Z(\BA_K^S) \arrow[d,Rightarrow, no head] \arrow[r,phantom,"\times" description] & {\Inv^3(H,\QZ(2))_\rmnorm} \arrow[r] \arrow[d, "\simeq" ]       & \QZ \arrow[d,Rightarrow, no head] \\
Z(\BA_K^S)                                \arrow[r,phantom,"\times" description] & {\overline H^3_\nr(Z,\QZ(2))} \arrow[r]  & \QZ.                               
\end{tikzcd}
\]
As a consequence, there is a reciprocity obstruction to strong approximation for the examples we constructed. Then we show that for certain $Z$ (e.g. the one in Example \ref{example}, or $\SL_n/T$ for $T$ a torus), this obstruction is the only one.
\begin{thm}[Theorem \ref{thm rec only one sc}]
Let $Z=E/\SL_1(A)$ with $[A]\in\br X$ of exponent $m$, and the special rational group $E$ is split semisimple simply connected. For the $p$-adic curve $X$, suppose $\Pic^0(X)/m=0$, or equivalently $\prescript{}{m} H^1(k,\Pic\overline X)=0$ (for example $X=\P^1_k$ the projective line satisfies this condition). Then there is an exact sequence of pointed sets
$$1\rightarrow \overline{Z(K)}^S\rightarrow Z(\BA_K^S)\rightarrow \overline H^3_{\nr,S}(Z,\QZ(2))^D
\rightarrow 1$$ for $S\subseteq X^{(1)}$ a non-empty finite set of places. In particular,
 the reciprocity obstruction to strong approximation away from $S$ is the only one for $Z$. 
 \par The group $\overline H^3_{\nr,S}(Z,\QZ(2))$ measures the defect of strong approximation away from $S$ for $Z$, and is finite cyclic of order $\gcd(\frac{I(S)}{I(X)},m)$ where $I(X)$ (resp. $I(S)$) is the index of $X$ (resp. $S$). In particular, strong approximation away from $S$ holds for $Z$ if and only of $I(S)/I(X)$ is coprime to $m$, and such an $S$ always exists, for example $S$ such that $I(S)=I(X)$.
\end{thm}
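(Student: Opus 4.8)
The plan is to translate the strong approximation problem for $Z = E/H$ into a global-to-local question about $H^1(-,H)$, exploiting that $E$ is split semisimple simply connected and special. Since $E$ is special, $H^1(F,E)=1$ for every extension $F/K$, so the orbit map identifies the $E(F)$-orbits on $Z(F)$ with $H^1(F,H)$; and because $H=\SL_1(A)$ sits in the reduced-norm sequence $1\to\SL_1(A)\to\GL_1(A)\xrightarrow{\Nrd}\Gm\to 1$ with $H^1(F,\GL_1(A))=1$, one gets the abelian description $H^1(F,H)\cong F^\times/\Nrd(A_F^\times)$. Using strong approximation for the split group $E$ away from the non-empty $S$, together with the fact that the fibres of $Z(K_v)\to H^1(K_v,H)$ are open $E(K_v)$-orbits, I would show that the defect of strong approximation for $Z$ is measured exactly by the cokernel of the diagonal map $K^\times/\Nrd(A^\times)\to\prod'_{v\notin S}K_v^\times/\Nrd(A_v^\times)$ (restricted product with respect to the classes trivial outside a finite set, where $A$ is split). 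This reduces the displayed sequence to a duality at the level of reduced-norm cokernels.

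Next I would pin down the pairing. By the compatibility diagram already established, the pairing against $\overline H^3_\nr(Z,\QZ(2))$ coincides with the one induced by the Rost invariant $R\in\Inv^3(H,\QZ(2))_\rmnorm$, which on $H^1(K_v,H)=K_v^\times/\Nrd(A_v^\times)$ is the cup product $a\mapsto (a)\cup[A_v]\in H^3(K_v,\QZ(2))$. For a place $v$ of $X$, Kato's local duality gives $H^3(K_v,\QZ(2))\cong\br\kappa(v)\cong\QZ$, so $R$ realizes the local reciprocity pairing. The inclusion $\overline{Z(K)}^S\subseteq\ker$ of the pairing is then the generalized Weil reciprocity law: for $a\in K^\times$ one has $\sum_v\inv_v\big((a)\cup[A]\big)=0$, and this extends to the closure by continuity of the locally constant pairing. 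The substance is the reverse inclusion and the surjectivity onto $\overline H^3_{\nr,S}(Z,\QZ(2))^D$.

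The main obstacle is showing that the reciprocity pairing is a perfect duality, i.e. that
\[
\coker\Big(K^\times/\Nrd(A^\times)\to\prod\nolimits'_{v\notin S}K_v^\times/\Nrd(A_v^\times)\Big)\;\cong\;\overline H^3_{\nr,S}(Z,\QZ(2))^D.
\]
I would deduce this from a Poitou--Tate type exact sequence for $\QZ(2)$ over the $p$-adic function field $K$, built on the Lichtenbaum duality pairing $\br X\times\Pic X\to\QZ$. Feeding $[A]$ through this machine, cup product with $[A]$ turns the reduced-norm cokernels into pieces of $H^3(-,\QZ(2))$, and the global reciprocity sequence identifies the cokernel with the dual of the group of multiples $c[A]$ that vanish locally at every $v\in S$. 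The hypothesis $\Pic^0(X)/m=0$, equivalently $\prescript{}{m}H^1(k,\Pic\overline X)=0$, is precisely what forces the relevant obstruction term to vanish and makes the localization map on $m$-torsion surjective; this is where the assumption is indispensable (and why $X=\P^1_k$ works). I expect the bookkeeping of the restricted product and the openness/continuity statements needed to pass between $Z$ and $H^1(-,H)$ to be the most delicate technical point.

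Finally I would compute the defect group. Under $\overline H^3_\nr(Z,\QZ(2))\cong\Inv^3(H,\QZ(2))_\rmnorm=\langle R\rangle\cong\Z/m$ (the realization $a\mapsto(a)\cup[A_F]$ shows $R$ has order exactly $\exp(A)=m$), the subgroup $\overline H^3_{\nr,S}$ consists of the multiples $cR$ whose localization dies at each $v\in S$, i.e. those $c$ with $c[A_v]=0$ in $\br K_v$. As $[A]$ is unramified, $[A_v]$ is determined by its specialization $[A](v)\in\br\kappa(v)\cong\QZ$, which Lichtenbaum duality computes: writing $\deg\colon\Pic X\to I(X)\Z$, the condition $\Pic^0(X)/m=0$ gives $\Pic X/m\cong\Z/m$ generated by a degree-$I(X)$ class, while $[A]$ of exponent $m$ generates the dual $\prescript{}{m}\br X\cong\Z/m$; hence for $v$ of degree $d_v$ one finds $[A](v)=\tfrac{1}{m}\cdot\tfrac{d_v}{I(X)}$, of order $m/\gcd(m,d_v/I(X))$. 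Requiring vanishing for all $v\in S$ replaces $d_v/I(X)$ by $\gcd_{v\in S}(d_v/I(X))=I(S)/I(X)$, so the subgroup of valid $c$ is cyclic of order $\gcd(I(S)/I(X),m)$. This yields the finite cyclic defect group of the claimed order; in particular it is trivial exactly when $\gcd(I(S)/I(X),m)=1$, and choosing $S$ with $I(S)=I(X)$ always achieves this.
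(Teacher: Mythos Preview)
Your overall architecture matches the paper's: reduce to the cokernel of $H^1(K,H)\to\bigoplus_{v\notin S}H^1(K_v,H)$ using that $E$ is special and satisfies strong approximation away from $S$, identify the reciprocity pairing with the Rost invariant via Proposition~\ref{prop inv rec compatible}, and compute the defect group through Lichtenbaum duality. Your final paragraph computing $\overline H^3_{\nr,S}(Z,\QZ(2))$ as cyclic of order $\gcd(I(S)/I(X),m)$ is correct and agrees with the paper's calculation (your formula $m/\gcd(m,d_v/I(X))$ for the local exponent $m_v$ is the same as the paper's $mI(X)/\gcd(d_v,mI(X))$ once one uses $I(X)\mid d_v$).

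The genuine gap is the middle step, where you write ``I would deduce this from a Poitou--Tate type exact sequence for $\QZ(2)$ \ldots\ built on the Lichtenbaum duality pairing.'' No such sequence is available off the shelf in the form you need, and invoking it is not a proof. The paper does not use abstract duality machinery here; it argues directly. The key observation you are missing is that the hypothesis $\prescript{}{m}H^1(k,\Pic\overline X)=0$ forces $[A]\in\prescript{}{m}\br X$ to come from $\br k$ (via the Hochschild--Serre sequence $\br k\to\br X\to H^1(k,\Pic\overline X)$). One then chooses a lift $[A_0]\in\br k$ of exponent $mI(X)$, so that $\psi([A],z)=(\deg z)[A_0]$ by Remark~\ref{rmk}, and the map $f_H$ becomes literally $\deg/I(X)\bmod m$. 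Exactness at the middle is then elementary: if $(n_v)_v$ satisfies $\sum n_v\deg v\equiv 0\pmod{mI(X)}$, the divisor $\sum n_v\cdot v$ has trivial image in $\Pic X/m$ (this is where $\Pic^0(X)/m=0$ enters), hence is the divisor of some $a\in K^\times$ modulo $m$, and $[a]\in H^1(K,H)$ is the required global lift. Your abstract Lichtenbaum-duality computation of $[A](v)$ is a valid alternative to the ``constant algebra'' viewpoint, but it does not by itself supply this lifting argument; you still need the concrete step through $\Pic X/m\cong\Z/m$.
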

 The following result is obtained by exploiting the Poitou-Tate type exact sequence in \cite{harari2015weak} along with the description of $\Inv^3(T,\QZ(2))_\rmnorm$ given by Blinstein and Merkurjev in \cite{blinstein2013cohomological}.
\begin{thm}[Theorem \ref{thm rec only one for SLn/tori}]Let $Z=E/T$ be a classifying variety of a torus $T$ over $K$ a $p$-adic function field, where the special rational group $E$ is split semisimple simply connected. Then there is an exact sequence of pointed sets
\[1\rightarrow \overline{Z(K)}^S\rightarrow Z(\BA_K^S)\rightarrow \overline H^3_{\nr,S}(Z,\QZ(2))^D\] for $S\subseteq X^{(1)}$ a non-empty finite set of places. In other words,
the reciprocity obstruction to strong approximation away from $S$ is the only one for $Z$. If we suppose furthermore that $\CHOW^2(Z)\rightarrow H^0(K,\CHOW^2(\overline Z))$ is surjective, then there is an exact sequence of pointed sets
\[1\rightarrow \overline{Z(K)}^S\rightarrow Z(\BA_K^S)\rightarrow \overline H^3_{\nr,S}(Z,\QZ(2))^D\rightarrow \overline H^3_{\nr,X^{(1)}}(Z,\QZ(2))^D\rightarrow 1.
\]
\end{thm}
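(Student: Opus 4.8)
The plan is to use the special rational group $E$ to reduce the strong-approximation problem for $Z=E/T$ to an arithmetic duality statement for the torus $T$, and then to feed this into the Poitou--Tate sequence of \cite{harari2015weak}. Since $E$ is special, for every extension $F/K$ the sequence $E(F)\to Z(F)\xrightarrow{\partial_F}H^1(F,T)\to H^1(F,E)=1$ is exact, so $\partial_F$ is surjective with fibres the $E(F)$-orbits. Taking $F=K$ and $F=K_v$ and assembling the latter into a restricted product, I would obtain a commutative square relating $Z(K)\to Z(\BA_K^S)$ to $H^1(K,T)\to\prod'_{v\notin S}H^1(K_v,T)$. Because $E$ is split semisimple simply connected (hence a product of $\SL$- and $\Sp$-factors) and rational, strong approximation away from the nonempty $S$ holds for $E$; using this together with the $E$-homogeneity of the fibres of $\partial$, one shows that $(z_v)\in Z(\BA_K^S)$ lies in $\overline{Z(K)}^S$ if and only if $(\partial_{K_v}(z_v))$ lies in the closure of the image of $H^1(K,T)$. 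This reduces the defect of strong approximation for $Z$ to the cokernel of $H^1(K,T)\to\prod'_{v\notin S}H^1(K_v,T)$.

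Next I would identify the dual side. By the compatibility diagram established above, $\overline H^3_\nr(Z,\QZ(2))\cong\Inv^3(T,\QZ(2))_\rmnorm$, and the reciprocity pairing becomes evaluation of invariants followed by the sum $\sum_v\colon\bigoplus_v H^3(K_v,\QZ(2))\to\QZ$, where each $H^3(K_v,\QZ(2))\cong\QZ$. Using the computation of $\Inv^3(T,\QZ(2))_\rmnorm$ of Blinstein--Merkurjev \cite{blinstein2013cohomological}, I would identify this group, up to the correction term $H^0(K,\CHOW^2(\overline Z))/\mathrm{im}\,\CHOW^2(Z)$ arising from $\CHOW^2(\overline{BT})\cong\CHOW^2(\overline Z)$, with the Galois-cohomology group of $\hat T$ with a Tate twist that is Poitou--Tate dual to $H^1(-,T)$; concretely, the evaluation pairing $H^1(K_v,T)\times\Inv^3(T,\QZ(2))_\rmnorm\to H^3(K_v,\QZ(2))=\QZ$ is identified with the local duality pairing for $T$ over the two-dimensional local field $K_v$. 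The ``trivial on $S$'' subgroups and the quotients by constants on both sides are matched under this identification.

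With these identifications in place, the relevant segment of the Poitou--Tate exact sequence of \cite{harari2015weak} reads precisely as the assertion that the cokernel of $H^1(K,T)\to\prod'_{v\notin S}H^1(K_v,T)$ is dual to $\overline H^3_{\nr,S}(Z,\QZ(2))$. Combined with the reduction of the first paragraph, this yields exactness of $1\to\overline{Z(K)}^S\to Z(\BA_K^S)\to\overline H^3_{\nr,S}(Z,\QZ(2))^D$, i.e. that the reciprocity obstruction is the only obstruction to strong approximation away from $S$. For the extended sequence, I would invoke the hypothesis that $\CHOW^2(Z)\to H^0(K,\CHOW^2(\overline Z))$ is surjective: by the Blinstein--Merkurjev computation this makes the correction term vanish, so that $Z(\BA_K^S)\to\overline H^3_{\nr,S}(Z,\QZ(2))^D$ becomes surjective and its deviation from being an isomorphism over all places is measured exactly by the image of $\overline H^3_{\nr,X^{(1)}}(Z,\QZ(2))\hookrightarrow\overline H^3_{\nr,S}(Z,\QZ(2))$; dualizing produces the four-term exact sequence, the final surjection onto $\overline H^3_{\nr,X^{(1)}}(Z,\QZ(2))^D$ being the last segment of the Poitou--Tate sequence.

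The main obstacle will be the compatibility statement at the heart of the second paragraph: proving that the abstractly defined reciprocity pairing, built from evaluation of unramified classes on adelic points, agrees under the Blinstein--Merkurjev identification with the arithmetic (cup-product/local-duality) pairing feeding the Poitou--Tate sequence, while keeping track of the Tate twists, the restricted-product topologies, the ``trivial on $S$'' conditions, and the passage to quotients by constants throughout. A secondary difficulty is the reduction in the first paragraph, where one must upgrade strong approximation for $E$ and the $E$-homogeneity of the fibres of $\partial$ into the precise statement about closures, handling continuity of the pairing and the finiteness of the local groups $H^1(K_v,T)$.
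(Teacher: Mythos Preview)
Your proposal is correct and follows essentially the same route as the paper: reduce via the exact rows $E\to Z\to H^1(-,T)\to 1$ and strong approximation for the split simply connected $E$; identify $\overline H^3_\nr(Z,\QZ(2))\simeq\Inv^3(T,\QZ(2))_\rmnorm$ and use the Blinstein--Merkurjev map $\alpha\colon H^1(K,T')\to\Inv^3(T,\QZ(2))_\rmnorm$ (your ``$\hat T$ with a Tate twist'' is the dual torus $T'$); then feed everything into the Poitou--Tate sequence of \cite{harari2015weak}, where the right column is governed by $\Sha^1_{X^{(1)}\setminus S}(K,T')$ and $\Sha^1(K,T')$. Two small sharpenings: for the three-term sequence you only need the \emph{existence} of $\alpha$ (so being orthogonal to $\overline H^3_{\nr,S}$ implies being orthogonal to the image of $\Sha^1_{X^{(1)}\setminus S}(K,T')$), not that the cokernel is literally dual to $\overline H^3_{\nr,S}$; and for the four-term sequence, the $\CHOW^2$ hypothesis makes $\alpha$ (hence the induced maps on Sha-groups) \emph{surjective}, so the dual maps $\overline H^3_{\nr,S}(Z,\QZ(2))^D\to\Sha^1_{X^{(1)}\setminus S}(K,T')^D$ are \emph{injective}, and the exact right column then transfers by diagram chase to the left column --- this is the precise mechanism rather than ``$Z(\BA_K^S)\to\overline H^3_{\nr,S}(Z,\QZ(2))^D$ becomes surjective''.
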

Then we show that our reciprocity obstruction to strong approximation not only works for classifying varieties, but also applies to tori, compatible with the results in \cite{harari2019espace} obtained by Harari and Izquierdo:
\begin{thm}[Corollary \ref{cor SA tori}]
The morphism $T(\BA_K)\rightarrow H^3_{\nr}(T,\QZ(2))^D$ induces injective morphisms
\[{A(T)/\Div}\hookrightarrow H^3_{\nr}(T,\QZ(2))^D,\]
\[A(T)_\tors\hookrightarrow ({H^3_{\nr}(T,\QZ(2))_\wedge})^D\]
where $H^3_{\nr}(T,\QZ(2))_\wedge:=\varprojlim_m{H^3_{\nr}(T,\QZ(2))/m},$ the group $A(T):=T(\BA_K)/\overline{T(K)}=T(\BA_K)/T(K)$ measures the defect of strong approximation, and $A(T)/\Div$ denotes the quotient of $A(T)$ by its maximal divisible subgroup.
\end{thm}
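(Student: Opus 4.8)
The plan is to deduce both injections from the reciprocity pairing $T(\BA_K)\times H^3_{\nr}(T,\QZ(2))\to\QZ$ of the introduction, combined with the arithmetic duality for tori established by Harari and Izquierdo in \cite{harari2019espace}. First I would check that the pairing factors through $A(T)$ on the left: each $K$-point of $T$ is orthogonal to $H^3_{\nr}(T,\QZ(2))$ by the generalized Weil reciprocity law, and by continuity of the pairing so is the closure $\overline{T(K)}=T(K)$, producing the morphism $A(T)\to H^3_{\nr}(T,\QZ(2))^D$. Since the $\QZ(2)$-coefficients make $H^3_{\nr}(T,\QZ(2))$ a torsion group, its Pontryagin dual $H^3_{\nr}(T,\QZ(2))^D$ is profinite, hence reduced; therefore the morphism annihilates the maximal divisible subgroup and descends to a map $A(T)/\Div\to H^3_{\nr}(T,\QZ(2))^D$. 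Note that only injectivity is asserted, so it suffices to control the left kernel of the pairing rather than to prove full perfectness.

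The heart of the argument is to match this reciprocity morphism with the Harari--Izquierdo duality, which itself rests on the Poitou--Tate type exact sequence of \cite{harari2015weak}. For this I would identify $H^3_{\nr}(T,\QZ(2))$, for the torus $T$ viewed as a variety, with the Galois-cohomological group dual to $A(T)$ in that duality: concretely, one computes the unramified cohomology of $T$ modulo constants in terms of the character lattice of $T$ and a suitable Tate twist, in the spirit of the invariant computations of \cite{blinstein2013cohomological} already used for the classifying varieties $E/T$. The crucial verification is that the reciprocity pairing $((x_v)_v,\alpha)\mapsto\sum_v\alpha(x_v)$ coincides, under this identification, with the Harari--Izquierdo pairing that measures the defect $A(T)$ of strong approximation, up to the usual sign and normalization.

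Granting this compatibility, both injections follow from the non-degeneracy of the Harari--Izquierdo pairing by standard bookkeeping with divisible and torsion subgroups. For the first, their duality shows that an element of $A(T)$ orthogonal to all of $H^3_{\nr}(T,\QZ(2))$ already lies in the maximal divisible subgroup, so that $A(T)/\Div\hookrightarrow H^3_{\nr}(T,\QZ(2))^D$. For the second, restricting to torsion and using $H^3_{\nr}(T,\QZ(2))_\wedge=\varprojlim_m H^3_{\nr}(T,\QZ(2))/m$, the pairing between $m$-torsion on one side and the $m$-completion on the other yields $A(T)_\tors\hookrightarrow (H^3_{\nr}(T,\QZ(2))_\wedge)^D$; the completion on the right is exactly what is needed to accommodate the non-compactness of $T(\BA_K)$. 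I expect the main obstacle to be the identification of the previous paragraph, namely pinning down $H^3_{\nr}(T,\QZ(2))$ for the torus $T$ as a variety (rather than for the classifying variety $E/T$) and checking that the reciprocity pairing is literally the arithmetic duality pairing of \cite{harari2019espace}; once this matching is in place, the divisible--torsion and non-compactness bookkeeping is routine.
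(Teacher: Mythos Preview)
Your overall strategy---reduce to the Harari--Izquierdo duality by showing that the reciprocity pairing on $T(\BA_K)$ is compatible with theirs---is exactly the paper's, and your final paragraph on divisible/torsion bookkeeping is correct. The difference lies in how you propose to realize that compatibility, and there your plan overshoots.

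You aim to \emph{identify} $H^3_{\nr}(T,\QZ(2))$ with the Galois-cohomological group $H^2(K,T')$ appearing in \cite{harari2019espace}, via Blinstein--Merkurjev style computations. Two problems: first, those computations describe $\Inv^3(T,\QZ(2))\simeq H^3_{\nr}(E/T,\QZ(2))$ for a classifying variety, not $H^3_{\nr}(T,\QZ(2))$ for $T$ viewed as a variety, and there is no obvious transfer between the two (you rightly flag this as the main obstacle); second, a full identification is stronger than you need and likely false as stated---$H^3_{\nr}(T,\QZ(2))$ contains at least the constant classes from $H^3(K,\QZ(2))$.

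The paper asks for less. It only constructs a \emph{map} $H^2(K,T')\to H^3_{\nr}(T,\QZ(2))$: cup-product with $\Id\in H^0(T,T)$ lands in $H^4(T,\Z(2))$, and then an edge map for the Leray spectral sequence of $T_{\et}\to T_{\Zar}$ (after Kahn) lands in $H^3_{\nr}(T,\QZ(2))$. Compatibility with the local pairings is a short diagram chase using functoriality of cup products and the Gersten resolution. This one-way map already suffices: if $x\in T(\BA_K)$ is orthogonal to all of $H^3_{\nr}(T,\QZ(2))$, it is in particular orthogonal to the image of $H^2(K,T')$, hence orthogonal to $H^2(K,T')$ under the Harari--Izquierdo pairing, hence lies in $T(K)$ by their Poitou--Tate sequence. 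The injections then follow by quoting \cite{harari2019espace} directly, exactly as you say. So your argument becomes correct once you replace the hoped-for identification by this explicit map.
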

Finally, we explain that for classifying varieties, our comparison between the two pairings of reciprocity obstruction and cohomological invariants has a parallel form for weak approximation problems too, given by the commutative diagram 
\[
\begin{tikzcd}
\prod_{v\in X^{(1)}}Z(K_v) \arrow[d,Rightarrow, no head] \arrow[r,phantom,"\times" description] & {\Inv_\nr^3(H,\QZ(2))_\rmnorm} \arrow[r]        & \QZ \arrow[d,Rightarrow, no head] \\
\prod_{v\in X^{(1)}}Z(K_v)                              \arrow[r,phantom,"\times" description] & {H^3_\nr(K(Z)/K,\QZ(2))/\Im(H^3(K,\QZ(2)))} \arrow[r] \arrow[u, leftarrow,"\simeq" ] & \QZ
\end{tikzcd}\addtag\label{intro WA pairing diagram}\]
where $\Inv^3_\nr(H,\QZ(2))_\rmnorm$ denotes the subgroup of $\Inv^3(H,\QZ(2))_\rmnorm$ consisting of the unramified invariants. 
\par As a quick and direct application, we obtain the following theorem giving answers to weak approximation problems for classifying varieties of tori over $K$, combining the Poitou-Tate type exact sequence in \cite{harari2015weak} and the description of $\Inv_\nr ^3(T,\QZ(2))_\rmnorm$ in \cite{blinstein2013cohomological}. This result was also obtained by Linh in a different way in his very recent work (cf. Theorem B of \cite{linh2022arithmetics}).
\begin{thm}[Theorem \ref{thm WA SLn/T}]
    Let $Z=E/T$ be a classifying variety of a torus $T$ over $K$. Then the reciprocity obstruction to weak approximation is the only one for $Z$. In fact, there is a morphism $$\Sha_S^1(K,T^\prime)\rightarrow H^3_\nr(K(Z)/K,\QZ(2))/\Im(H^3(K,\QZ(2)))$$ such that the subset of elements in $\prod_{v\in S}Z(K_v)$ orthogonal to the image of $\Sha_S^1(K,T^\prime)$ in $\frac{H^3_\nr(K(Z)/K,\QZ(2))}{\Im(H^3(K,\QZ(2)))} $ with respect to the pairing (\ref{intro WA pairing diagram}) already equals the closure $\overline {Z(K)}$ in the topological product $\prod_{v\in S}Z(K_v)$, where $\Sha^1_S(T^\prime):=\Ker(H^1(K,T^\prime)\rightarrow\prod_{v
\in X^{(1)}\backslash S}H^1(K_v,T^\prime))$ for a finite set $S$.
\end{thm}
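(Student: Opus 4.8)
The plan is to prove Theorem \ref{thm WA SLn/T} by leveraging the comparison diagram (\ref{intro WA pairing diagram}) to translate the weak approximation problem into a purely cohomological statement about the torus $T$, and then to apply the Poitou--Tate type exact sequence from \cite{harari2015weak}. The first step is to identify, via the vertical isomorphism $\Inv_\nr^3(H,\QZ(2))_\rmnorm \simeq H^3_\nr(K(Z)/K,\QZ(2))/\Im(H^3(K,\QZ(2)))$ in the diagram, the group of unramified normalized invariants with a concrete Galois-cohomological object. Here I would invoke the description of Blinstein--Merkurjev \cite{blinstein2013cohomological}: for a torus $T$ with character module dual $T'$ (the dual torus), the relevant invariant group $\Inv_\nr^3(T,\QZ(2))_\rmnorm$ should be expressible in terms of $H^1(K,T')$ modulo the part that is everywhere unramified, matching $\Sha^1_S(T')$ after we restrict attention to the set $S$. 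Establishing this identification—that the cohomological invariants pairing coincides, under the diagram, with the Tate-type pairing between $\prod_{v\in S}Z(K_v)$ and $\Sha^1_S(T')$—is the essential bookkeeping step.

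Next I would set up the duality. The map $\prod_{v\in X^{(1)}}Z(K_v)\to \QZ$ furnished by the pairing factors, by the generalized Weil reciprocity law together with the commutativity of (\ref{intro WA pairing diagram}), through the evaluation of invariants on local points. For a classifying variety $Z=E/T$ with $E$ split semisimple simply connected, a local point $x_v\in Z(K_v)$ gives rise to a class in $H^1(K_v,T)$ via the connecting map of the torsor $E\to Z$, and the pairing against an invariant $\alpha$ becomes the local Tate pairing $H^1(K_v,T)\times H^1(K_v,T')\to H^3(K_v,\QZ(2))\to\QZ$. This reduces the weak approximation statement to the assertion that the image of $\prod_{v\in S}Z(K_v)$ in $\prod_{v\in S}H^1(K_v,T)$, after quotienting by the diagonal image of $H^1(K,T)$, is exactly the orthogonal complement of $\Sha^1_S(T')$ under the sum of local Tate pairings. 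The closure $\overline{Z(K)}$ corresponds on the $T$-side to the closure of the image of $H^1(K,T)$, since the fibers of $Z(K_v)\to H^1(K_v,T)$ are $E(K_v)$-orbits and $E$, being split and simply connected, satisfies strong approximation so that these fibers are accounted for by $E(K)$-density.

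The Poitou--Tate type exact sequence of \cite{harari2015weak} then provides exactly the exactness needed: it relates $H^1(K,T)$, the restricted product of the $H^1(K_v,T)$, and the dual of $\Sha^1$ (or $\Sha^2$) of $T'$, with the arrows given by the local pairings. I would feed the decomposition $X^{(1)}=S\sqcup(X^{(1)}\setminus S)$ into this sequence to isolate the $S$-local factor and extract the statement that $\overline{Z(K)}$ inside $\prod_{v\in S}Z(K_v)$ is precisely the orthogonal complement of the image of $\Sha^1_S(T')$. The morphism $\Sha^1_S(K,T')\to H^3_\nr(K(Z)/K,\QZ(2))/\Im(H^3(K,\QZ(2)))$ in the statement is the composite of the Blinstein--Merkurjev identification with the diagram's vertical isomorphism, and its well-definedness is what the first step secures.

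The main obstacle I anticipate is the precise matching in the first step between the cohomological invariant formalism and the Galois-cohomological duality: one must verify that the evaluation-of-invariants pairing induced by the Rost-type construction agrees, up to the unramified quotient, with the cup-product local duality pairing on $H^1(K_v,T)\times H^1(K_v,T')$, including compatibility of normalizations and sign conventions. Controlling the unramified conditions at the places in $S$ versus outside $S$—so that the everywhere-unramified classes drop out and only $\Sha^1_S$ survives—requires care, since the definition of $\Sha^1_S$ excludes ramification conditions exactly at $S$ while the unramified invariant group imposes them everywhere on $Z$. Reconciling these two notions of "unramified/locally trivial" is where the technical heart of the argument lies; once that is in place, the exactness is a formal consequence of Poitou--Tate duality for tori over $p$-adic function fields.
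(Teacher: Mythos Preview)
Your overall strategy matches the paper's: use the compatibility diagram to pass from the reciprocity pairing to the cohomological-invariant pairing, invoke Blinstein--Merkurjev to supply the map from $\Sha^1_S(T')$ into the unramified invariants, and then conclude by Poitou--Tate duality for tori plus weak approximation for $E$. So the architecture is right.

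There is one concrete gap, however. Your ``bookkeeping step'' is vague precisely where the paper is explicit: the identification of $\Inv^3_\nr(T,\QZ(2))_\rmnorm$ does \emph{not} arise by taking $H^1(K,T')$ modulo some ``everywhere unramified part''. The map $\alpha:H^1(K,T')\to\Inv^3(T,\QZ(2))_\rmnorm$ of Blinstein--Merkurjev lands in all normalized invariants, not in the unramified ones, and there is no reason a class in $\Sha^1_S(T')$ should map to an unramified invariant directly. The paper resolves this by passing through a flasque resolution $1\to T\xrightarrow{g} T_1\to P\to 1$ with $P$ quasi-trivial: Blinstein--Merkurjev's Theorem~5.5 gives $\Inv^3(T_1,\QZ(2))_\rmnorm\simeq\Inv^3_\nr(T,\QZ(2))_\rmnorm$, and on the dual side the induced map $H^1(K,T_1')\to H^1(K,T')$ restricts to an isomorphism $\Sha^1_S(T_1')\simeq\Sha^1_S(T')$ because $\Sha^2_S(P')=0$. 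The required morphism is then the composite
\[
\Sha^1_S(T')\;\simeq\;\Sha^1_S(T_1')\;\hookrightarrow\;H^1(K,T_1')\;\xrightarrow{\alpha}\;\Inv^3(T_1,\QZ(2))_\rmnorm\;\simeq\;\Inv^3_\nr(T,\QZ(2))_\rmnorm,
\]
and the compatibility with the local Tate pairing is checked via $I(b)=a_1\cup g_*(b)=a\cup b$. Without the flasque detour your map is not defined, and this is exactly the step you flag as the ``technical heart'' without supplying the mechanism.

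A second, smaller point: the worry in your final paragraph conflates two unrelated uses of ``unramified''. The condition defining $\Inv^3_\nr$ is unramifiedness with respect to \emph{geometric} discrete valuations on $K(Z)$ trivial on $K$ (i.e., divisors on a compactification of $Z$); the condition defining $\Sha^1_S$ is local triviality at \emph{arithmetic} places $v\in X^{(1)}\setminus S$. These live in different worlds, and no reconciliation is needed---the flasque resolution handles the first, and the Poitou--Tate sequence handles the second, independently.
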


\section*{Preliminaries}
\subsection*{Motivic complexes}
Let $X$ be a smooth scheme. Lichtenbaum (cf. \cite{lichtenbaum1987construction}\cite{lichtenbaum2007new}) defined the motivic complexes $\BBZ(i)$ for $i=0,1,2,$ of étale sheaves on $X$. We write $H^*(X,\BBZ(i))$ for the étale (hyper)cohomology groups of $X$ with values in $\BBZ(i)$. The complex $\BBZ(0)$ equals the constant sheaf $\BBZ$ and $\BBZ(1)=\BBG_{m,X}[-1]$, hence $H^n(X,\BBZ(1))=H^{n-1}(X,\BBG_{m,X})$. In particular, $H^3(X, \BBZ(1)) = \br X$, the cohomological Brauer group of $X$. The complex $\BBZ(2)$ is concentrated in degrees $1$ and $2$ and there is a product map $\BBZ(1)\otimes^\BL\BBZ(1)\rightarrow\BBZ(2)$ (cf. Proposition 2.5 of \cite{lichtenbaum1987construction}). Over the small Zariski site $X_{\Zar}$, we have the complex $\Z(2)_\Zar$ concentrated in degree $\leq 2$ defined in a similar way to $\Z(2)$.
\par We denote by $A(i)$ the complex $A\otimes \Z(i)$ for an abelian group $A$ (similarly for $\Z(i)_\Zar$). If $X$ is defined over a field whose characteristic does not divide $m$, we have a quasi-isomorphism of complexes of étale sheaves $\BBZ/m\BBZ\otimes^{\BL}\BBZ(i)\simeq\mu_m^{\otimes i}$, 
where $\mu_m$ is the étale sheaf of $m$th roots of unity, and we set $\mu_m^0=\BBZ/m\BBZ$. Thus we shall also use the notation $\QZ(i)$ for the direct limit of the sheaves $\mu_m^{\otimes i}$ for all $m>0$ when the base field has characteristic $0$.
\par The exact triangle in the derived category of étale sheaves
\[\BBZ(i)\rightarrow\BBQ(i)\rightarrow\QZ(i)\rightarrow\BBZ(i)[1]\]
yields the connecting morphism 
\[H^n(X,\QZ(i))\rightarrow H^{n+1}(X,\BBZ(i)),\]
which is an isomorphism if $X=\Spec F$ for a field $F$ and $n>i$ (cf. Lemma 1.1 of \cite{kahn1993descente}). 
\subsection*{Classifying varieties and cohomological invariants} A connected algebraic group $E$ defined over a field $K$ is called \textit{special} if $H^1(F,E)=1$ for all field extensions $F/K.$ Let $H$ be a smooth algebraic group over $K$. Choose an embedding $\rho:H\hookrightarrow E$ into a special rational group $E$. Examples of special rational groups $E$ include $\SL_n,\Sp_{2n},\GL_1(A)$ for a central simple $K$-algebra $A$.  The variety $Z=E/\rho(H)$ is called a \textit{classifying variety} of $H$. Different choices of $\rho$ give stably birational classifying varieties. (cf. \S2 of \cite{merkurjev2002unramified}.)
\par
For every field extension $F/K$, the set $H^1(F,H)$ classifying $H$-torsors over $F$ can be identified with $Z(F)/E(F)$, the orbit space of the action of $E(F)$ on $Z(F)$. For a point $x\in Z(F)$, we write $x^*E$ for the fiber of the $H$-torsor $E\rightarrow Z$ above the point $x$ and it is an $H$-torsor over $F$, and its class in $H^1(F,H)$ is the image of $[E]$ under the map $H^1(Z,H)\rightarrow H^1(F,H)$ induced by $x$. In particular, we have the generic $H$-torsor $\xi^*E$ over $K(Z)$ corresponding to the generic point $\xi\in Z$.
\par
Let $H$ be an algebraic group over a field $K.$ The group $\Inv^d(H,\QZ(d-1))$ of degree $d$ cohomological invariants of $H$ consists of morphisms of functors \[H^1(*,H)\rightarrow H^d(*,\QZ(d-1))\] from the category of field extensions of $K$ to the category of sets. An invariant is called \textit{normalized} if it takes the trivial $H$-torsor to zero. The normalized invariants form a subgroup $\Inv^d(H,\QZ(d-1))_\rmnorm$ of $\Inv^d(H,\QZ(d-1))$ and there is a natural isomorphism
\[\Inv^d(H,\QZ(d-1))\simeq H^d(K,\QZ(d-1)\oplus\Inv^d(H,\QZ(d-1))_\rmnorm.\]
Rost proved that if $H$ is absolutely simple simply connected, then $\Inv^3(H,\QZ(2))_\rmnorm$ is a finite cyclic group generated by the \textit{Rost invariant} $R_H$ (cf. Theorem 9.11 of \cite{garibaldi2003cohomological}).

\subsection*{$p$-adic function fields}
Let $k$ be a finite extension of $\Qp$, and let $X$ be a smooth proper geometrically integral curve over $k$. Let $K$ be the function field of $X$. There is a natural set of places: the set of all closed points of $X$ that we denote by $X^{(1)}$. For a closed point $v\in X^{(1)},$ we denote by $\kappa(v)$ its residue field, and $K_v$ the completion of $K$ for the discrete valuation induced by $v$. Then $\kappa(v)$ is also the residue field of the ring of integers $\mathcal O_v$ and is a finite extension of $k$. Therefore $K_v$ is a \textit{2-dimensional local fields}, i.e. a field complete with respect to a discrete valuation whose residue field is a classical local field.\par
We have a complex in virtue of the generalized Weil reciprocity law (cf. Theorem 6.4.3 of \cite{gille2017central}):
\begin{equation}\label{Weil}H^3(K,\BBQ/\BBZ(2))\rightarrow \bigoplus_{v\in X^{(1)}}H^3(K_v,\BBQ/\BBZ(2))\xrightarrow[]{\Sigma}\BBQ/\BBZ\end{equation}
where the map $\Sigma$ is given by the composition \[\bigoplus_{v\in X^{(1)}}H^3(K_v,\BBQ/\BBZ(2))\xrightarrow[\simeq]{\partial_v}\bigoplus_{v\in X^{(1)}}H^2(\kappa(v),\BBQ/\BBZ(1))\xrightarrow[]{\Sigma\Cor_{\kappa(v)/k}}H^2(k,\BBQ/\BBZ(1))=\BBQ/\BBZ.\]
The above notation of our fields will be fixed for the rest of the article.
\subsection*{Strong approximation}
Given a smooth geometrically integral $K$-scheme $Z$, for a non-empty open subscheme $U\subseteq X$ sufficiently small, we can find a smooth geometrically integral $U$-scheme $\mathcal Z$ such that $\mathcal Z\times_U K\simeq Z$. We can thus define the adelic space (which is actually independent of the choice of the model $\mathcal Z$):
\[Z(\BA_K):=\varinjlim_{\substack{U'\subseteq  U \\U'\neq\emptyset}}\prod_{v\in X^{(1)}\backslash U'}Z(K_v)\times\prod_{v\in U'^{(1)}}\mathcal Z (\mathcal O_v).\]
The problem of strong approximation studies the closure $\overline{Z(K)}$ inside the adelic space $Z(\BA_K)$ with the restricted product topology, possibly away from a finite set $S$ of places (in this case, we study $\overline{Z(K)}^S$ the closure of $Z(K)$ in $Z(\BA_K^S):=\prod_{v\notin S}^{\prime}Z(K_v)$ the restricted product with respect to integral points excluding places in $S$). If ${Z(K)}$ is dense in $Z(\BA_K^S)$, we say that $Z$ satisfies \textit{strong approximation away from $S$}: we can simultaneously approximate finitely
many $K_v$-points with $v\notin S$ by a single $K$-rational point with the condition that this
point is integral at all other places.
For $S\subseteq S^\prime$, strong approximation away from $S$ implies strong approximation away from $S^\prime$.

\par
Strong approximation for semisimple groups has been studied in the general setting. A $K$-group $G$ is said to be \textit{quasi-split} if it has a Borel subgroup defined over $K$. A $K$-group $G$ is called \textit{absolutely almost simple} if over the algebraic closure it becomes an extension of a simple group by a finite normal (hence central) subgroup.
\begin{thm}[Gille, Corollaire 5.1 of \cite{gille2009probleme}]
Let $R$ be a Dedekind domain with function field $K$ and let $G$ be a semisimple simply connected $K$-group which is absolutely almost simple and isotropic. Suppose the $K$-variety $G$ is retract rational. Then strong approximation holds for $G$ over $\Spec R$.
\end{thm}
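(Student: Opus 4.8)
The plan is to deduce strong approximation for $G$ from strong approximation for unipotent groups, exploiting that an isotropic group carries a large supply of rational unipotent elements, and to let retract rationality supply the Kneser--Tits property needed to glue the local unipotent subgroups into the full groups of local points. Write $S$ for the non-empty set of places of $K$ lying outside $\Spec R$, so that strong approximation over $\Spec R$ means density of $G(K)$ in the restricted adelic group $G(\BA_K^S)$. Since $G$ is isotropic, I would first fix a minimal parabolic $K$-subgroup $P$ with unipotent radical $U$ and opposite unipotent radical $U^{-}$, so that the big cell $U^{-}\times P\to G$ is an open immersion onto a dense open subscheme of $G$; here $U$ and $U^{-}$ are split unipotent $K$-groups, that is, iterated extensions of Weil restrictions $R_{L/K}\Ga$ of the additive group along finite separable $L/K$. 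Let $G(K)^{+}$ be the subgroup of $G(K)$ generated by $U(K)$ and $U^{-}(K)$, and define $G(K_v)^{+}$ similarly.

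Next I would establish strong approximation away from $S$ for the root groups. As $S\neq\varnothing$ the scheme $\Spec R$ is affine, and the additive strong approximation over the Dedekind domain $R$ (a Chinese-remainder argument, valid precisely because $S\neq\varnothing$) shows that $K$ is dense in $\Ga(\BA_K^S)$. This property is inherited by Weil restrictions and by iterated extensions of such groups, so $U(K)$ is dense in $U(\BA_K^S)$ and $U^{-}(K)$ is dense in $U^{-}(\BA_K^S)$.

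The crucial input is the local Kneser--Tits property. Retract rationality of the $K$-variety $G$ is preserved under the base changes $K\to K_v$, so each $G_{K_v}$ is retract rational; invoking the solution of the Kneser--Tits problem for such groups (the main result of \cite{gille2009probleme}, namely the vanishing of the Whitehead group $W(K_v,G)=G(K_v)/G(K_v)^{+}$), I conclude $G(K_v)=G(K_v)^{+}$ for every $v$, and likewise $G(K)=G(K)^{+}$. I would then argue topologically inside $G(\BA_K^S)$: set $D:=\overline{G(K)^{+}}$. By the previous paragraph $D$ contains $U(\BA_K^S)$ and $U^{-}(\BA_K^S)$, and since the big cell is open their product fills a neighbourhood of the identity, so the closed subgroup $D$ is open. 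Because each $G(K_v)$ is generated by $U(K_v)$ and $U^{-}(K_v)$, and because for almost all $v$ the integral structure is controlled by the integral root groups, a standard argument in the restricted-product topology shows that every adelic point lies in $D$; hence $D=G(\BA_K^S)$. Since $G(K)^{+}\subseteq G(K)$, density of $G(K)^{+}$ forces density of $G(K)$, which is the asserted strong approximation.

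The \emph{main obstacle} is the local Kneser--Tits step: extracting $G(K_v)=G(K_v)^{+}$, equivalently the vanishing of the Whitehead group over each completion, from retract rationality. This is exactly where the hypothesis is indispensable, for over a general Dedekind domain the completions $K_v$ may be higher-dimensional local fields over which local Kneser--Tits is far from automatic. Once it is granted, the unipotent approximation of the second paragraph and the open-subgroup argument of the third are routine, the only remaining care being the bookkeeping of integrality in the restricted-product topology.
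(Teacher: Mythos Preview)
The paper does not give its own proof of this theorem: it is quoted as Gille's result (Corollaire~5.1 of \cite{gille2009probleme}) and used as a black box, so there is no argument in the paper to compare your sketch against.

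That said, your outline is a faithful summary of how Gille himself deduces the corollary. The substantive content of \cite{gille2009probleme} is the Kneser--Tits theorem---triviality of the Whitehead group $G(F)/G(F)^{+}$ for retract $F$-rational absolutely almost simple simply connected isotropic groups over an arbitrary field $F$---and Corollaire~5.1 is obtained from it exactly along the lines you describe: strong approximation for the split unipotent radicals $U$, $U^{-}$ over a Dedekind ring with $S\neq\varnothing$, followed by the big-cell/open-subgroup argument in the restricted adelic topology, with the Kneser--Tits input supplying $G(K_v)=G(K_v)^{+}$ at each place. Your identification of this last step as the crux is correct. One small remark: the global equality $G(K)=G(K)^{+}$ is not needed (only the local one is), as you yourself observe in your last sentence, so that assertion could simply be dropped.
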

The above theorem implies an earlier result:
\begin{thm}[Harder, Satz 2.2.1 of \cite{harder1967halbeinfache}]
Let $R$ be a Dedekind domain with function field $K$. Let $G$ be a quasi-split semisimple simply connected $K$-group. Then strong approximation holds for $G$ over $\Spec R$.
\end{thm}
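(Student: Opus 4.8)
The plan is to \emph{deduce} this statement from Gille's theorem (Corollaire 5.1 of \cite{gille2009probleme}) quoted just above, by reducing a general quasi-split semisimple simply connected group to the absolutely almost simple case and then checking that the two remaining hypotheses of that theorem---isotropy and retract rationality---are automatic in the quasi-split setting.

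First I would reduce to the absolutely almost simple case. By the structure theory of semisimple simply connected groups, $G$ decomposes as a finite product
\[
G \;\simeq\; \prod_{i} \Res_{L_i/K} G_i,
\]
where each $L_i/K$ is a finite separable extension and each $G_i$ is an absolutely almost simple simply connected $L_i$-group inheriting the quasi-split property from $G$. Strong approximation is compatible with finite products (the closure of the rational points in a product of adelic spaces is the product of the closures), so it suffices to treat a single factor $\Res_{L/K} G_0$. Letting $S$ denote the integral closure of $R$ in $L$---again a Dedekind domain, finite over $R$---the adjunction $(\Res_{L/K}G_0)(A)=G_0(A\otimes_K L)$ identifies the adelic space of $\Res_{L/K}G_0$ over $\Spec R$ with that of $G_0$ over $\Spec S$, and $K$-points with $L$-points. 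Hence strong approximation for $\Res_{L/K}G_0$ over $\Spec R$ is equivalent to that for $G_0$ over $\Spec S$, and I am reduced to an absolutely almost simple simply connected quasi-split group over the fraction field of a Dedekind domain.

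Next I would verify the two extra hypotheses of Gille's theorem for such $G_0$. Isotropy is immediate: a nontrivial quasi-split semisimple group has a Borel subgroup $B=T\ltimes U$, and the maximal split subtorus of $T$ has rank equal to the number of Galois orbits on the simple roots, which is positive, so $G_0$ is isotropic. For retract rationality I would prove the stronger fact that $G_0$ is rational. Using the open big Bruhat cell $U^-\times T\times U\hookrightarrow G_0$, where $U,U^-$ are split unipotent (hence $K$-isomorphic to affine space), rationality of $G_0$ reduces to rationality of the maximal torus $T=Z_{G_0}(S)$. Here the simply connected hypothesis is crucial: $X^*(T)$ is the full weight lattice, with $\Z$-basis the fundamental weights $\{\omega_i\}$ dual to the simple coroots $\{\alpha_i^\vee\}$. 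Since the $*$-action of the Galois group permutes the simple roots in the quasi-split case, the relation $\langle \sigma\omega_i,\alpha^\vee_{\sigma(j)}\rangle=\langle\omega_i,\alpha^\vee_j\rangle$ forces $\sigma(\omega_i)=\omega_{\sigma(i)}$, so $X^*(T)$ is a permutation Galois module and $T$ is an induced (quasi-trivial) torus, i.e.\ a product of Weil restrictions $\Res_{L_j/K}\Gm$. Induced tori are open subvarieties of affine space, hence rational; therefore $T$, and with it $G_0$, is rational, in particular retract rational.

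With isotropy and retract rationality established, Gille's theorem applies to $G_0$ over $\Spec S$ and yields strong approximation; unwinding the Weil restriction and product reductions then gives strong approximation for $G$ over $\Spec R$. The only genuinely substantive step is the rationality of $G_0$, which rests on the observation that the maximal torus of a \emph{simply connected} quasi-split group is induced; the bookkeeping for Weil restriction and products is routine but must be carried out compatibly with the integral adelic structures.
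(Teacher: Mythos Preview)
The paper does not give a proof of this statement at all: it merely records that Gille's theorem ``implies an earlier result'' and then states Harder's theorem with a citation. Your proposal is precisely the deduction the paper alludes to, with the details supplied, and the argument is correct---the reduction to absolutely almost simple factors via products and Weil restriction is standard, isotropy is automatic since a nontrivial quasi-split semisimple group has positive split rank, and the rationality argument via the big cell and the observation that the maximal torus of a Borel in a simply connected quasi-split group is induced (because Galois permutes the fundamental weights) is the standard way to see this.
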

\begin{cor}\label{split groups satisfy SA} Let $X$ be a smooth proper geometrically integral curve over a $p$-adic field $k$, with function field $K=k(X)$. Let $G$ be a quasi-split semisimple simply connected $K$-group. Then strong approximation away from a non-empty finite set of places $S\subseteq X^{(1)}$ holds for $G$.
\end{cor}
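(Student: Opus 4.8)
The strategy is to deduce the corollary directly from Harder's theorem above (Satz 2.2.1 of \cite{harder1967halbeinfache}) by producing a Dedekind domain whose spectrum recovers the adelic space $G(\BA_K^S)$. Since $S\subseteq X^{(1)}$ is non-empty, the open subscheme $U:=X\backslash S$ is an \emph{affine} smooth geometrically integral curve over $k$, because removing at least one closed point from a proper curve leaves an affine scheme. Its ring of regular functions $R:=\mathcal{O}_X(U)$ is therefore a Dedekind domain with $\ff(R)=K$, which is precisely the setting required by Harder's theorem.

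First I would match the arithmetic data on the two sides. The closed points of $\Spec R=U$ are exactly the closed points of $X$ not lying in $S$, that is $X^{(1)}\backslash S$, and for each such $v$ the completion of $K$ and its valuation ring attached to $v$ as a point of $\Spec R$ coincide with $K_v$ and $\mathcal{O}_v$. To make the comparison of integral points precise, I would spread $G$ out to a smooth affine group scheme $\mathcal{G}$ over a non-empty open $U'\subseteq U$, so that $\mathcal{G}(\mathcal{O}_v)$ is a well-defined compact open subgroup of $G(K_v)$ for all $v\in U'^{(1)}$ and the restricted product is independent of the chosen model. Under this dictionary the restricted adele group of $G$ over $\Spec R$, namely $\prod_{v}^{\prime}G(K_v)$ taken over the places $v$ of $R$ with respect to the subgroups $\mathcal{G}(\mathcal{O}_v)$, is literally the space $G(\BA_K^S)=\prod_{v\notin S}^{\prime}G(K_v)$ entering the definition of strong approximation away from $S$.

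With this identification the conclusion is immediate: $G$ is a quasi-split semisimple simply connected $K$-group and $R$ is a Dedekind domain with fraction field $K$, so Harder's theorem asserts that the diagonal image of $G(K)$ is dense in the restricted adele group of $G$ over $\Spec R$, which is to say $\overline{G(K)}^S=G(\BA_K^S)$ and hence strong approximation away from $S$ holds for $G$. The main (and only modest) obstacle is the bookkeeping of the middle paragraph: one must verify that Harder's classical notion of strong approximation over $\Spec R$ matches the paper's adelic formulation, in particular that the places of $R$ are exactly $X^{(1)}\backslash S$ and that the integral models agree at almost all $v$. The non-emptiness hypothesis on $S$ is essential here, as it is precisely what makes $U$ affine and $R$ a Dedekind domain with function field $K$; for $S=\emptyset$ one would have $\mathcal{O}_X(X)=k$ and the reduction to Harder's theorem would fail.
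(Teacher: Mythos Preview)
Your proof is correct and follows exactly the same approach as the paper: observe that $X\backslash S$ is affine since $S$ is non-empty, so its ring of regular functions is a Dedekind domain with fraction field $K$, and then apply Harder's theorem. The paper's proof is a single sentence to this effect; your additional bookkeeping (identifying places and adelic spaces) is correct and makes explicit what the paper leaves to the reader.
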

\begin{proof}
The open subset $V:=X\backslash S$ is affine and the ring of regular functions $k[V]$ is a Dedekind domain. 
\end{proof}
\par
When studying strong approximation for classifying varieties $Z=E/H$, we usually make use of the following commutative diagram 
\[\begin{tikzcd}
    E(K)\arrow[r]\arrow[d]&Z(K)\arrow[r]\arrow[d]&H^1(K,H)\arrow[r]\arrow[d]&1\\
    E(\BA_K^S)\arrow[r]&Z(\BA_K^S)\arrow[r]& \BP_S^1(H)\arrow[r]&1
\end{tikzcd}\]
where \[\BP_S^i(H):={\prod_{v\not\in S}}' H^i(K_v,H)\addtag\label{p1}\]
denotes the restricted topological product of the $H^i(K_v,H)$ for all $v\in X^{(1)}\backslash S$ with respect to the images of the maps $H^i(\mathcal O_v,\mathcal H)$ for $v\in U$.

\subsection*{Notation}
Unless otherwise stated, all (hyper)cohomology groups are taken with respect to the étale cohomology. \par
Given an abelian group $A$, we denote by $\prescript{}{m}{A}$ the $m$-torsion subgroup of $A$. The notation $A_\wedge$ is the inverse limit of the quotients $A/nA$ for all $n>0$.  For $A$ a topological abelian group, denote by $A^D:=\Hom_{\cts}(A,\QZ)$ the group of continuous homomorphisms from $A$ to $\QZ$. When there is no other topology defined on $A$, we equip $A$ with the discrete topology. The functor $A\mapsto A^D$ is an anti-equivalence of categories between torsion abelian groups and profinite groups.

\section{Failure of strong approximation}\label{failure}
Over $p$-adic function fields, we construct varieties of the form $\SL_n/H$ with $H$ semisimple simply connected and we show that strong approximation away from a finite set of places fails for such varieties. 
\par
\subsection{Inner type}\label{inner type}
For a smooth proper geometrically integral curve $X$ over a $p$-adic field $k$, we have the following dualities due to Lichtenbaum:
\begin{thm}[Lichtenbaum, cf. \cite{lichtenbaum1969duality} Theorem 4]
There are pairings \begin{equation}\label{pair}\psi:\br X\times \Pic X\rightarrow \BBQ/\BBZ
\end{equation}
\[\rho:H^1(k,\Pic \overline X)\times\Pic^0(X)\rightarrow \QZ\] where $\psi$ is induced by evaluation on closed points. These pairings induce dualities 
\begin{equation}\label{dual}\psi^* :\br X\simeq\Pic X^D\end{equation}
\begin{equation}\label{pic}\rho^* :H^1(k,\Pic(\overline X))\simeq\Pic^0(\overline X)^D.\end{equation}
\end{thm}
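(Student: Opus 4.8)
The plan is to build both pairings out of local class field theory on $k$ and then to deduce the two duality isomorphisms by reducing everything to Tate's local duality for the Jacobian of $X$, assembling the pieces through the \HSerre spectral sequence and a five-lemma diagram chase. The geometric inputs I would take for granted are Tsen's theorem ($\br\overline X=0$) and the canonical self-duality of the Jacobian.

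First I would construct $\psi$. For a closed point $v\in X^{(1)}$ and $\alpha\in\br X$, pulling back along $v$ gives a class in $\br\kappa(v)$, and local class field theory supplies $\inv_{\kappa(v)}\colon\br\kappa(v)\xrightarrow{\sim}\QZ$; setting $\psi(\alpha,\sum_v n_v v)=\sum_v n_v\,\inv_{\kappa(v)}(v^*\alpha)$ defines a pairing on zero-cycles. Since $X$ is proper over $k$, this is just the composite $\br X\times\CHOW_0(X)\to\br k\xrightarrow{\inv_k}\QZ$ furnished by proper pushforward, using $\inv_k\circ\Cor_{\kappa(v)/k}=\inv_{\kappa(v)}$; hence it descends from zero-cycles to rational-equivalence classes, i.e. to $\Pic X=\CHOW_0(X)$, the vanishing on a principal divisor $\div(f)$ being exactly the Weil reciprocity law. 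This produces $\psi\colon\br X\times\Pic X\to\QZ$ and therefore $\psi^*\colon\br X\to(\Pic X)^D$.

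To obtain $\rho$ and to prepare the perfectness statements I would run the \HSerre spectral sequence $H^p(k,H^q(\overline X,\gm))\Rightarrow H^{p+q}(X,\gm)$, where $H^0(\overline X,\gm)=\overline k^\times$, $H^1(\overline X,\gm)=\Pic\overline X$, and $H^2(\overline X,\gm)=\br\overline X=0$ by Tsen. Since $\cd k=2$ the term $H^3(k,\overline k^\times)$ vanishes, so the low-degree sequence collapses to
\[
0\to\br k/\im(d_2)\to\br X\to H^1(k,\Pic\overline X)\to 0,
\]
where $d_2\colon(\Pic\overline X)^\Gamma\to\br k$ (with $\Gamma=\gal(\ol k|k)$) is the same differential governing $0\to\Pic X\to(\Pic\overline X)^\Gamma\to\br k$. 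Restricting $\psi$ to $\Pic^0 X$ annihilates the image of $\br k$, because a class pulled back from $k$ pairs with a zero-cycle only through its degree, which is $0$ on $\Pic^0$; hence $\psi$ factors through $\br X/\br k\cong H^1(k,\Pic\overline X)$ and yields $\rho\colon H^1(k,\Pic\overline X)\times\Pic^0 X\to\QZ$ and $\rho^*$. For perfectness of $\rho^*$ I would use the degree sequence $0\to\Pic^0\overline X\to\Pic\overline X\xrightarrow{\deg}\ZZ\to 0$, the identification $\Pic^0\overline X=J(\overline k)$, and $H^1(k,\ZZ)=0$, to write $H^1(k,\Pic\overline X)$ as a quotient of $H^1(k,J)$; self-duality of $J$ then gives Tate's perfect pairing $J(k)\times H^1(k,J)\to\QZ$, which I transport along $\Pic^0 X\to J(k)$. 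Finally, for $\psi^*$ I would place the Brauer sequence above opposite the dual of $0\to\Pic^0 X\to\Pic X\xrightarrow{\deg}I(X)\ZZ\to 0$ in a commutative ladder, with right-hand vertical map $\rho^*$ and left-hand vertical map $\br k/\im(d_2)\to(I(X)\ZZ)^D$ induced by $\inv_k$; the five lemma then forces $\psi^*\colon\br X\simeq(\Pic X)^D$.

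The main obstacle is the bookkeeping at the two ends of this ladder. Controlling $\im(d_2)$ — equivalently the cokernel of $\Pic X\to(\Pic\overline X)^\Gamma$ — and the precise image of $\deg$ is exactly the period/index data packaged by $I(X)$, and the crux is to check that $\inv_k$ identifies $\br k/\im(d_2)$ with $(I(X)\ZZ)^D$, since the naive pairing of $\br k$ against the degree is only multiplication by $I(X)$. One must also endow $\Pic X$ with a topology (using the compactness of $J(k)$) for which the torsion group $\br X$ is its full continuous Pontryagin dual, so that the abstract five-lemma isomorphism upgrades to the topological dualities \eqref{dual} and \eqref{pic}. The most delicate single point is verifying that the evaluation pairing $\psi$, restricted to $\Pic^0 X$, literally agrees with the cup-product/Tate duality pairing on $J$ under $\Pic^0 X\to J(k)$ — this compatibility is what makes the Jacobian's self-duality feed correctly into both $\rho^*$ and $\psi^*$.
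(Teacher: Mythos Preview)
The paper does not prove this theorem; it is stated with attribution to Lichtenbaum \cite{lichtenbaum1969duality} and used as a black box throughout (e.g.\ in Proposition~\ref{comp}, Remark~\ref{local-global for brauer}, and Theorem~\ref{thm rec only one sc}). There is therefore nothing in the paper to compare your proposal against.

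That said, your sketch is essentially Lichtenbaum's original argument: construct $\psi$ by evaluation at closed points, filter $\br X$ via the \HSerre spectral sequence using $\br\overline X=0$ (Tsen) and $\cd k=2$, reduce $\rho^*$ to Tate's local duality for the self-dual Jacobian, and then recover $\psi^*$ by a five-lemma on the ladder pairing the filtration of $\br X$ against the dual of the degree filtration of $\Pic X$. The obstacles you flag---the period/index bookkeeping governing $\im(d_2)$ and the image of $\deg$, the topology on $\Pic X$ needed for Pontryagin duality, and the compatibility of the evaluation pairing with Tate's cup-product pairing under $\Pic^0 X\to J(k)$---are exactly the points Lichtenbaum works out, and none of them is an actual gap in your strategy. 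One small caution: when $X(k)=\varnothing$ the map $\Pic^0 X\to J(k)$ need not be surjective, and the cokernel (measured by the period of $X$) has to be tracked alongside $I(X)$ in the five-lemma step; your discussion of $\im(d_2)$ implicitly handles this, but it is worth making the period/index distinction explicit. Also note that the statement as printed has $\Pic^0(\overline X)^D$ in \eqref{pic}, whereas the pairing $\rho$ is with $\Pic^0(X)$; the paper's own use of \eqref{pic} in Theorem~\ref{thm rec only one sc} (the equivalence $\Pic^0(X)/m=0 \Leftrightarrow \prescript{}{m}{H^1(k,\Pic\overline X)}=0$) confirms that $\Pic^0(X)^D$ is what is meant.
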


\begin{rmk}\label{rmk}
If $[A]$ comes from $\br k$ via the restriction map, then
$\psi([A],z)=(\deg z)[A].$
\end{rmk}

\begin{construction}\label{cons}Let $H$ be an absolutely simple simply connected group over $K$ of inner type $\mathsf A$. Then $H=\SL_1(A)$ for $A$ a simple central $K$-algebra. Suppose the class $[A]\in\br K$ lies in $\br_\nr (K/k)=\br X$.  
Embed $H$ into a special rational group $E$ which is split semisimple simply connected (for example, $E=\SL_n$ or $\Sp_{2n}$). Consider the classifying variety $Z:=E/H$. Since $H$ also embeds into the special rational group $\GL_1(A)$, we have that $Z$ is stably birational to the classifying variety $\GL_1(A)/\SL_1(A)=\gm$ which is rational. Therefore, the variety $Z$ is stably rational. In particular, weak approximation holds for $Z$.
\end{construction}
The Rost invariant $R_H$ has an explicit formula in this situation (cf. \cite{gille2011formules}): Let $[x]\in H^1(F,H)=F^\times/\Nrd((A\otimes F)^\times)$, then we have 
\[ R_H([x])=(x)\cup[A]\in H^3(F,\QZ(2))\]
where $(x)$ denotes the class in $H^1(K,\mu_m)=F^\times/F^{\times m}$ of any representative of $[x]$, and $m$ is the exponent of $A$. Moreover, the order of $R_H\in \Inv^3(H,\QZ(2))_\rmnorm$ equals $m$ (cf. Theorem 11.5 of \cite{garibaldi2003cohomological}).
\par 

We have the following commutative diagram where the first row is the complex (\ref{Weil}) by the generalized Weil reciprocity:
\begin{equation}\label{diagram}
\begin{tikzcd}
{H^3(K,\BBQ/\BBZ(2))} \arrow[r] & {\bigoplus_{v\in X^{(1)}}H^3(K_v,\BBQ/\BBZ(2))} \arrow[r,"\Sigma"] & \BBQ/\BBZ. \\
{H^1(K,H)} \arrow[r] \arrow[u,"R_H"]  & {\bigoplus_{v\in X^{(1)}} H^1(K_v,H)} \arrow[u,"\bigoplus R_{H}"] \arrow[ru,"f_H"']        &          
\end{tikzcd}\end{equation}
Now we show that the composed map $f_H$ defined by this diagram is related to the pairing $\psi$ in (\ref{pair}):
\begin{prop}\label{comp}
Let $a=([a_v])_v\in\bigoplus_{v\in X^{(1)}}H^1(K_v, H)=\bigoplus_{v\in X^{(1)}} K_v^\times/\Nrd((A\otimes K_v)^\times)$ represented by $a_v\in K_v^{\times}$. Denote by $n_v$ the valuation of $a_v$ with respect to $v$ and let $z:=\sum_{v\in X^{(1)}}n_v\cdot v \in\Pic X$. Then
$$\psi([A],z)=f_H(a).$$
\end{prop}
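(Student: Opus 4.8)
The plan is to evaluate the composite $f_H$ one place at a time, using the explicit Rost invariant formula together with a residue computation, and then to recognise the outcome as the defining expression for the Lichtenbaum pairing $\psi$. Unwinding the definition of $\Sigma$ in diagram (\ref{diagram}), and recalling that the residue maps $\partial_v\colon H^3(K_v,\QZ(2))\xrightarrow{\sim}H^2(\kappa(v),\QZ(1))$ are isomorphisms, we have
\[
f_H(a)=\Sigma\Bigl(\bigoplus_v R_H([a_v])\Bigr)=\sum_{v\in X^{(1)}}\Cor_{\kappa(v)/k}\bigl(\partial_v\,R_H([a_v])\bigr)\in H^2(k,\QZ(1))=\QZ.
\]
Thus everything reduces to computing the single residue $\partial_v R_H([a_v])\in H^2(\kappa(v),\QZ(1))=\br\kappa(v)$. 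Using the formula for the Rost invariant recalled above, $R_H([a_v])=(a_v)\cup[A]$, where $(a_v)\in H^1(K_v,\mu_m)=K_v^\times/K_v^{\times m}$ is the symbol of the chosen representative and $[A]\in{}_m\br K_v=H^2(K_v,\mu_m)$.

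The key step is the residue identity $\partial_v\bigl((a_v)\cup[A]\bigr)=n_v\cdot[A](v)$. First I would use that $[A]\in\br X=\br_\nr(K/k)$ is unramified at $v$, so it lies in the image of $\br\mathcal O_v\to\br K_v$; in particular $\partial_v[A]=0$ and $[A]$ has a well-defined specialisation $[A](v)\in\br\kappa(v)$ given by reduction modulo the maximal ideal. Writing $a_v=u\pi_v^{n_v}$ with $u$ a unit and $\pi_v$ a uniformiser, so that $(a_v)=(u)+n_v(\pi_v)$ in $H^1(K_v,\mu_m)$, the standard formula for the residue of a cup product against an unramified class gives $\partial_v\bigl((u)\cup[A]\bigr)=0$ and $\partial_v\bigl((\pi_v)\cup[A]\bigr)=[A](v)$. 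Summing, $\partial_v R_H([a_v])=n_v\cdot[A](v)$ as claimed.

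Substituting back yields
\[
f_H(a)=\sum_{v\in X^{(1)}}n_v\cdot\Cor_{\kappa(v)/k}\bigl([A](v)\bigr).
\]
On the other side, the Lichtenbaum pairing $\psi$ of (\ref{pair}) is by definition induced by evaluation on closed points, so for $z=\sum_v n_v\cdot v$ one has $\psi([A],z)=\sum_v n_v\cdot\inv_{\kappa(v)}\bigl([A](v)\bigr)$, the specialisation $[A](v)$ being exactly the pullback of $[A]$ along the closed point $v$ (which agrees with the reduction from $\br\mathcal O_v$ since $\mathcal O_v$ is the local ring at $v$). Invoking the compatibility $\inv_k\circ\Cor_{\kappa(v)/k}=\inv_{\kappa(v)}$ for the finite extension of local fields $\kappa(v)/k$, and the identification $H^2(k,\QZ(1))=\br k=\QZ$, the two displayed expressions coincide, giving $\psi([A],z)=f_H(a)$.

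The main obstacle is the residue identity $\partial_v((a_v)\cup[A])=n_v\cdot[A](v)$: it hinges on the unramifiedness of $[A]$ at $v$ (so that the specialisation exists and the contribution of the unit part $(u)$ drops out) and on a careful verification that the specialisation produced by $\partial_v$ matches, with the correct normalisation, the evaluation map defining $\psi$. The remaining points—finiteness of the sum, independence of $f_H$ from the chosen representatives $a_v$ (which reflects that $\ord_v(\Nrd(c))$ is a multiple of the index of $A\otimes K_v$, hence kills $[A](v)$), and the behaviour of corestriction on Brauer groups of local fields—are routine once this identity is in place.
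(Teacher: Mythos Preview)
Your proof is correct and follows essentially the same route as the paper's: both expand $f_H$ place by place via the Rost invariant formula $R_H([a_v])=(a_v)\cup[A]$, write $a_v=u\pi_v^{n_v}$, use the unramifiedness of $[A]$ at $v$ to kill the unit contribution and to identify $\partial_v((\pi_v)\cup[A])$ with the specialisation $[A](v)$, and then recognise the resulting sum $\sum_v n_v\,\Cor_{\kappa(v)/k}([A](v))$ as the Lichtenbaum pairing. The paper is slightly more careful about pinning the residue identity to explicit references (the specialisation map $s^2_{-\pi_v}$ and its independence of the uniformiser, via \cite{gille2017central}), whereas you package this as a ``standard formula'' and add the compatibility $\inv_k\circ\Cor_{\kappa(v)/k}=\inv_{\kappa(v)}$; but there is no substantive difference in strategy.
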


\begin{proof}We have $f_H(a)=\sum_{v\in X^{(1)}}\Cor_{\kappa(v)/k}(\partial_v(R_{H}([a_v])))$. By linearity, it suffices to show $$\psi([A],n_v\cdot v)=\Cor_{\kappa(v)/k}(\partial_v(R_{H}([a_v]))).$$ Choose a uniformizer $\pi_v$ of the place $v$, and write $a_v=u\pi_v^{n_v}$ with $u$ a unit in $\mathcal O_v$. Then $$\partial_v(R_{H}([a_v]))=\partial_v((u\pi_v^{n_v})\cup[A])=\partial_v((u)\cup[A])+n_v\partial_v((\pi_v)\cup[A]).$$ In fact $\partial_v((\pi_v)\cup [A])$ is the image of $[A]$ under the specialization map $$s_{-\pi_v}^2: H^2(K_v,\mu_{m})\rightarrow H^2(\kappa(v),\mu_{m})$$ with respect to the uniformizer $-\pi_v$ (cf. Construction 6.8.6 of \cite{gille2017central}). Actually $s_{-\pi_v}^2([A])$ is independent of the choice of the uniformizer $\pi_v$, since $\partial_v([A])=0$, implying that $[A]$ lies in the image of $H^2(\kappa(v),\mu_{m})\xrightarrow{\Inf}H^2(K_v,\mu_{m})$ (cf. Proposition 6.8.7 and Corollary 6.8.8 of \cite{gille2017central}). The fact that $[A]$ is unramified at $v$ also gives $\partial_v((u)\cup[A])=0$ (cf. Lemma 6.8.4 of \cite{gille2017central}). But $\psi([A],v)$ is just the evaluation of $[A]$ at the closed point $v$ followed by $\Cor_{\kappa(v)/k}$, so we have $$\psi([A],n_v\cdot v)=\Cor_{\kappa(v)/k}(n_v\partial_v((\pi_v)\cup[A]))=\Cor_{\kappa(v)/k}(\partial_v(R_H([a_v])))$$ giving the desired result.
\end{proof}
The above calculations also give the following result, which will be used later.
\begin{prop}\label{cyclic}There is a bijection
\begin{equation*}\begin{split}H^1(K_v,H)& \simeq \BBZ/m_v\BBZ\\
[a_v]&\mapsto\text{valuation of }a_v\bmod m_v
\end{split}
\end{equation*}
where $m_v$ denotes the exponent of $A\otimes K_v\in\br K_v$ and $a_v\in K_v^\times/K_v^{\times m_v}$ represents the class $[a_v]\in H^1(K_v,H)$. 
\end{prop}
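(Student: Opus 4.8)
The plan is to make the standard identification $H^1(K_v,H)=K_v^\times/\Nrd(B^\times)$ (coming from $1\to\SL_1(A)\to\GL_1(A)\to\gm\to1$, exactly as in the Rost invariant formula above) for $B:=A\otimes K_v$, and then to compute this quotient through the valuation $w$ on $K_v$. The crucial input is the same fact used in the proof of Proposition~\ref{comp}: since $[A]\in\br X$, the class $[B]\in\br K_v$ is unramified, i.e. $\partial_v([B])=0$ and $[B]$ comes from $\br\mathcal O_v\simeq\br\kappa(v)$. First I would reduce to the underlying division algebra: writing $B=M_r(D)$ with $D$ central division over $K_v$, the reduced norm of a matrix algebra has the same image as that of $D$ (via the Dieudonn\'e determinant), so $\Nrd(B^\times)=\Nrd(D^\times)$. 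Because $[B]$ is unramified, its residue is a Brauer class over the classical local field $\kappa(v)$, where exponent and index coincide; hence $\ind(B)=m_v$, $\deg D=m_v$, and $D$ carries an Azumaya maximal order $\mathcal D$ over $\mathcal O_v$ whose reduction $\overline D=\mathcal D/\pi_v\mathcal D$ is a central division algebra over $\kappa(v)$ of degree $m_v$.

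It then suffices to prove two statements: (i) every unit is a reduced norm, i.e. $\mathcal O_v^\times\subseteq\Nrd(D^\times)$; and (ii) the set of valuations $w(\Nrd(D^\times))$ equals $m_v\Z$. For (i), the reduced norm $\overline D^\times\to\kappa(v)^\times$ is surjective since $\kappa(v)$ is local; lifting a preimage through $\mathcal D^\times\to\overline D^\times$ and then correcting successively by elements of $1+\pi_v^k\mathcal D$ — using surjectivity of the reduced trace $\Trd\colon\overline D\to\kappa(v)$ (nonzero as $\Trd(1)=m_v\neq0$ in characteristic $0$) together with completeness of $\mathcal O_v$ — yields $\Nrd(\mathcal D^\times)=\mathcal O_v^\times$. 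For (ii), since $\overline D$ is a division algebra every $x\in D^\times$ can be written $x=\pi_v^{k}u$ with $u\in\mathcal D^\times$ and $k\in\Z$; then $\Nrd(x)=\pi_v^{k m_v}\Nrd(u)$ (as $\pi_v\in K_v$ is central of reduced norm $\pi_v^{m_v}$) with $\Nrd(u)\in\mathcal O_v^\times$ by (i), so $w(\Nrd(x))=m_v k$ and the image is exactly $m_v\Z$.

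Combining the two, since $w\colon K_v^\times\to\Z$ is surjective with kernel $\mathcal O_v^\times\subseteq\Nrd(D^\times)$ and $w(\Nrd(D^\times))=m_v\Z$, passing to quotients gives the desired bijection $K_v^\times/\Nrd(D^\times)\xrightarrow{\sim}\Z/m_v\Z$ sending $[a_v]$ to $w(a_v)\bmod m_v$, which is the assertion.

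The main obstacle is step (i): establishing $\mathcal O_v^\times\subseteq\Nrd(D^\times)$ is what genuinely uses the unramifiedness of $A$ — it provides the Azumaya order with division residue, so that one may invoke surjectivity of the reduced norm over $\kappa(v)$ and run the Hensel-type successive approximation. The remaining ingredients (the matrix reduction, the equality of exponent and index over the local residue field, and the valuation bookkeeping in (ii)) are formal once (i) is available.
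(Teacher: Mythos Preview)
Your proof is correct, but it takes a genuinely different route from the paper's.

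The paper argues via the Rost invariant: it invokes the injectivity of $R_H\colon H^1(K_v,H)\to H^3(K_v,\QZ(2))$ (a nontrivial input, Theorem~5.3 of \cite{colliot2012patching}), composes with the isomorphism $\partial_v\colon H^3(K_v,\QZ(2))\simeq\br\kappa(v)$, and then uses the computation of Proposition~\ref{comp} to see that the image is the cyclic subgroup generated by $v^*([A])$, whose order is $m_v$ because $\br\kappa(v)\hookrightarrow\br K_v$ takes $v^*([A])$ to $[A\otimes K_v]$. Your argument bypasses the Rost invariant entirely and computes $K_v^\times/\Nrd(B^\times)$ directly from the structure theory of the unramified division algebra: the Azumaya order $\mathcal D$ with division residue $\overline D$ gives $\Nrd(D^\times)=\pi_v^{m_v\Z}\cdot\mathcal O_v^\times$, whence the valuation induces the desired bijection. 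Your approach is more elementary and self-contained; the paper's has the virtue of meshing directly with the Rost-invariant framework (the map $f_H$, diagram~(\ref{diagram}), and the later reciprocity pairings) used throughout the article.

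Two minor remarks. First, when you write ``its residue is a Brauer class over $\kappa(v)$'' you mean the specialization $v^*([B])\in\br\kappa(v)$ (the class from which $[B]$ arises by inflation), not the residue $\partial_v([B])$, which is zero precisely because $[B]$ is unramified; this is only a terminological slip. Second, the equality $\ind(B)=m_v$ you use relies on the standard fact that for an unramified Brauer class over a complete DVR the index is preserved under specialization to the residue field, together with period${}={}$index over the local field $\kappa(v)$; you implicitly use this, and it is worth making the reference explicit.
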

\begin{proof}
The map given by the Rost invariant $$R_H:H^1(K_v,H)\rightarrow H^3(K_v,\QZ(2))$$ is injective in this case (cf. Theorem 5.3 of \cite{colliot2012patching}). Therefore, we can identify $H^1(K_v,H)$ with its image $R_H(H^1(K_v,H))$. Applying the isomorphism $\partial_v: H^3(K_v,\QZ(2))\simeq \br(\kappa(v))$, we see that $H^1(K_v,H)\simeq \partial_v(R_H(H^1(K_v,H)))$ is in fact a cyclic group generated by $\partial_v(R_H([\pi_v]))=v^*([A])\in\br(\kappa(v))$ with the calculations in Proposition \ref{comp}, where $v^*([A])$ denotes the evaluation of $[A]$ at $v$. By Proposition 6.8.7 and Corollary 6.8.8 of \cite{gille2017central}, the class $A\otimes K_v\in \br K_v$ comes from $v^*([A])$ under the injection $\br(\kappa(v))\hookrightarrow \br K_v$, and thus they have the same order.
\end{proof}

\par 
In virtue of the Lichtenbaum duality (\ref{dual}), the non-vanishing of the pairing $\psi$ gives non-vanishing $f_H$, which can then be used to fabricate examples of non-surjective $H^1(K,H)\rightarrow \bigoplus_{v\in X^{(1)}}H^1(K_v,H_v)$. Moreover, this map can be non-surjective even away from a finite set $S$ of places:
 
\begin{thm}\label{thm failure of sa}
If the set of values taken by $\psi([A],-)$ at
 the divisors supported in $S\subseteq X^{(1)}$ do not cover all the values taken at the divisors supported in $X^{(1)}\setminus S$, i.e.$$\{\psi([A],z)|\Supp(z)\subseteq  X^{(1)}\backslash S\}\not\subseteq  \{\psi([A],z)|\Supp(z)\subseteq S\},$$ then $$H^1(K,H)\rightarrow\bigoplus_{v\in X^{(1)}\backslash S} H^1(K_v,H)$$ is not surjective. As a result, strong approximation away from $S$ does not hold for $Z$.
\end{thm}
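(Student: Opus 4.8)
The plan is to read off a reciprocity constraint from diagram~(\ref{diagram}) and then contradict it. Since the top row of~(\ref{diagram}) is the complex~(\ref{Weil}), the composite $\Sigma\circ R_H$ kills every class coming from $H^3(K,\QZ(2))$; by commutativity of the left square, the localization map $H^1(K,H)\to\bigoplus_{v\in X^{(1)}}H^1(K_v,H)$ therefore has image inside $\ker f_H$. Here I use that a global class $\beta\in H^1(K,H)$ is trivial at all but finitely many places (integral classes are trivial at places of good reduction by Lang's theorem over $\kappa(v)$), so its localization genuinely lies in the direct sum where $f_H$ is defined. Combined with Proposition~\ref{comp}, which evaluates $f_H$ on a family $([a_v])_v$ as $\psi([A],z)$ for the divisor $z=\sum_v(\mathrm{val}_v\,a_v)\cdot v$, this says: for every global class the associated divisor $z$ satisfies $\psi([A],z)=0$.

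First I would manufacture a witness to non-surjectivity. By hypothesis there is a divisor $z_0$ with $\Supp(z_0)\subseteq X^{(1)}\setminus S$ whose value $\psi([A],z_0)$ avoids the subgroup $\{\psi([A],z):\Supp(z)\subseteq S\}$ of $\QZ$ (it is a subgroup because $\psi([A],-)$ is a homomorphism on $\Pic X$). Writing $z_0=\sum n_v\cdot v$ and using Proposition~\ref{cyclic} to realize each residue $n_v\bmod m_v$ by a local class $[a_v]\in H^1(K_v,H)$ (take $a_v=\pi_v^{n_v}$), I obtain an element $a_0=([a_v])_v\in\bigoplus_{v\notin S}H^1(K_v,H)$ of finite support.

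Next I would show $a_0$ is not in the image of $H^1(K,H)$. Suppose $\beta\in H^1(K,H)$ localizes to $a_0$ away from $S$; its full localization $(\beta_v)_v$ has finite support, agrees with $a_0$ outside $S$, and takes some values at $v\in S$. By the first paragraph $f_H((\beta_v)_v)=0$, while Proposition~\ref{comp} computes this as $\psi([A],z')$ with $z'=z_0+z_S$, where $z_S$ is supported in $S$ and the contribution away from $S$ depends only on $n_v\bmod m_v$ since $v^{*}([A])$, and hence $\psi([A],v)$, is killed by $m_v$. Bilinearity then gives $\psi([A],z_0)=\psi([A],-z_S)$, placing $\psi([A],z_0)$ in the forbidden subgroup — a contradiction. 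Hence $H^1(K,H)\to\bigoplus_{v\notin S}H^1(K_v,H)$ is not surjective.

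Finally I would transfer this to a failure of strong approximation for $Z$ via the classifying-variety fibration diagram $E(K)\to Z(K)\to H^1(K,H)$ and its adelic analogue, whose bottom row surjects continuously onto $\BP_S^1(H)$ and identifies the image of $Z(K)$ with that of $H^1(K,H)$. If $Z(K)$ were dense in $Z(\BA_K^S)$, then the image of $H^1(K,H)$ would be dense in $\BP_S^1(H)$; testing density against the basic neighborhood of $a_0$ cut out at the finitely many places of $\Supp(a_0)$ together with all (finitely many) places of bad reduction, and using that integral local classes are trivial at good places, would produce a global class mapping to $a_0$ in $\bigoplus_{v\notin S}H^1(K_v,H)$, contradicting the previous step. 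I expect this last topological transfer — matching the restricted-product neighborhoods of $a_0$ against the finite-support obstruction and justifying triviality of integral classes off a finite bad set — to be the only genuinely technical point; the reciprocity input itself is essentially immediate from~(\ref{diagram}) and Propositions~\ref{comp} and~\ref{cyclic}.
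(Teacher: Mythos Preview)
Your argument is correct and follows essentially the same route as the paper: construct a local witness $a_0$ via Proposition~\ref{comp}, use the reciprocity complex~(\ref{Weil}) to see that any global lift would force $\psi([A],z_0)$ into the subgroup of values supported on $S$, and then pass to $Z$ via the classifying-variety diagram and the local constancy of $Z(\BA_K^S)\to\BP_S^1(H)$.

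One correction: your justification that integral local classes are trivial at good places invokes ``Lang's theorem over $\kappa(v)$'', but $\kappa(v)$ is a $p$-adic field, not a finite field, so Lang's theorem does not apply. The correct input is Kneser's theorem (Serre's Conjecture II for $p$-adic fields): $H^1(\kappa(v),H)=0$ for $H$ semisimple simply connected over a $p$-adic field, whence $H^1(\mathcal O_v,\mathcal H)\simeq H^1(\kappa(v),H)=0$ by Hensel. This is exactly what the paper cites, and with it your identification $\BP_S^1(H)=\bigoplus_{v\notin S}H^1(K_v,H)$ and the discreteness needed for the final density step go through. The paper's phrasing of that last step is slightly cleaner---``locally constant, hence open preimage of $a_0$ misses $Z(K)$''---but your density-against-a-basic-neighborhood formulation is equivalent once you know each singleton in the direct sum is open.
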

\begin{proof}
Take a divisor $z_0=n_v\cdot v$ supported in $X^{(1)}\backslash S$ such that $$\psi([A],z_0)\not\in \{\psi([A],z)|\Supp(z)\subseteq S\}.$$ Let $a:=([\pi_v^{n_v}])_v\in\bigoplus_{v\in X^{(1)}\backslash S}H^1(K_v,H)$. By Proposition \ref{comp}, the images of elements in $\bigoplus_{v\in S}H^1(K_v,H)$ under $f_H$ lie in $\{\psi([A],z)|\Supp(z)\subseteq S\}$, and thus cannot cancel out $f_H(a)=\psi([A],z_0)$. Then chasing the diagram (\ref{diagram}) shows that $a$ is not in the image of $H^1(K,H)$. 
\par Now consider the following commutative diagram
\[
\begin{tikzcd}
Z(K) \arrow[d] \arrow[r] & {H^1(K,H)} \arrow[d] \arrow[r]                 & 1 \\       Z(\BA^S_K) \arrow[r]        & {\bigoplus_{v\in X^{(1)}\backslash S}H^1(K_v,H)} \arrow[r] & 1
\end{tikzcd}\]
with exact rows. We identify $\oplus_{v\in X^{(1)}\backslash S}H^1(K_v,H)$ with the topological restricted product $\BP^1_S(H)$ because $H^1(\mathcal O_v, \mathcal H)=H^1(\kappa(v),H)=0$ by Serre's Conjecture II proved for $p$-adic fields (cf. Theorem 1 of \S4.1, \cite{kneser1969lectures}). The map $Z(\BA^S_K)\rightarrow\bigoplus_{v\in X^{(1)}\backslash S}H^1(K_v,H)$ is locally constant. Take one element $a\in\bigoplus_{v\in X^{(1)}\backslash S}H^1(K_v,H)$ which is not in the image of $H^1(K,H)$, for example the $a$ we just constructed. There exists an open subset $V$ of $z\in Z(\BA^S_K)$ which maps to $a$. A diagram chasing shows that $V$ doesn't meet $Z(K)$.
\end{proof}
For $S \subseteq X^{(1)}$, the index $I(S)$ of $S$ is defined to be the greatest common divisor of the degrees of all $v\in S$. The index $I(X)$ of $X$ is $I(S)$ for $S=X^{(1)}$.
 \begin{cor}\label{cor failure SA SLn/SL1A A constant}Let $A$ be a simple central $k$-algebra of exponent $m$ and consider $H:=\SL_1(A_K).$ Let $z_0=\sum_{v\in X^{(1)}} n_v\cdot v$ be a divisor such that $\deg z_0\not\equiv 0 \bmod m$. Let $S$ be a set of places in $X^{(1)}$ not meeting the support of $z_0$ such that $\deg(z_0)$ is not contained in the subgroup generated by $I(S)$ in $\BBZ/{m}\BBZ$. Then strong approximation away from $S$ does not hold for $Z$. 
\end{cor}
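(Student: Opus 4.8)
The plan is to reduce the statement entirely to Theorem \ref{thm failure of sa}, whose hypothesis is a condition on the values of the pairing $\psi$. So the whole task is to verify that inclusion of value sets fails for the constant class $[A_K]$. First I would set up the objects: since $A$ is central simple over $k$, the base change $A_K := A\otimes_k K$ is central simple over $K$, and, being extended from $k$, it is unramified at every closed point, so $[A_K]\in\br_\nr(K/k)=\br X$. Thus Construction \ref{cons} applies: we embed $H=\SL_1(A_K)$ into a split special rational group $E$ and form $Z=E/H$. Because $[A_K]$ is the image of $[A]\in\br k$ under restriction, Remark \ref{rmk} gives the explicit formula $\psi([A_K],z)=(\deg z)\,[A]$ for every $z\in\Pic X$.

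Next I would pass to invariants. Identifying $\br k\cong\QZ$ via the Hasse invariant, the exponent hypothesis says $[A]$ has order $m$, so I may write $[A]=a/m \bmod\BBZ$ with $\gcd(a,m)=1$. Then $\psi([A_K],z)=(\deg z)\cdot a/m\bmod\BBZ$ depends on $z$ only through $\deg z\bmod m$, and since $a$ is a unit modulo $m$ the induced map $\BBZ/m\BBZ\xrightarrow{\sim}\tfrac1m\BBZ/\BBZ$, $\;\overline{d}\mapsto d\cdot a/m$, is an isomorphism. This is the translation device that turns the group-theoretic hypothesis into a statement about $\psi$.

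Now I would compute the two value sets appearing in Theorem \ref{thm failure of sa}. As $z$ ranges over divisors supported in $S$, the degree $\deg z$ ranges over the subgroup of $\BBZ$ generated by $\{\deg v : v\in S\}$, namely $I(S)\BBZ$; under the isomorphism above the set $\{\psi([A_K],z)\mid\Supp(z)\subseteq S\}$ therefore corresponds exactly to the subgroup of $\BBZ/m\BBZ$ generated by $I(S)$. Since $S$ does not meet its support, $z_0$ is a divisor supported in $X^{(1)}\setminus S$, and $\psi([A_K],z_0)$ corresponds to $\deg z_0\bmod m$. By hypothesis $\deg z_0$ is not in the subgroup generated by $I(S)$, so $\psi([A_K],z_0)\notin\{\psi([A_K],z)\mid\Supp(z)\subseteq S\}$; this exhibits the required non-inclusion of value sets, and Theorem \ref{thm failure of sa} then yields the failure of strong approximation away from $S$ for $Z$.

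I do not expect a genuine obstacle here: the argument is essentially bookkeeping once Remark \ref{rmk} is invoked. The one point needing care is the translation through the invariant isomorphism, i.e.\ using that $a$ is invertible modulo $m$ so that multiplication by $[A]$ is injective on $\BBZ/m\BBZ$ and membership of $\psi([A_K],z_0)$ in the $S$-value set is equivalent to membership of $\deg z_0$ in $\langle I(S)\rangle\subseteq\BBZ/m\BBZ$. I would also remark in passing that the stated condition $\deg z_0\not\equiv 0\bmod m$ is automatically implied by the subgroup condition, since $0$ lies in every subgroup, so it serves only to emphasize the genuinely new case $m\nmid\deg z_0$.
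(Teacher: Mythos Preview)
Your proposal is correct and follows essentially the same route as the paper: reduce to Theorem \ref{thm failure of sa} by applying Remark \ref{rmk} to the constant class $[A_K]$, so that $\psi([A_K],z)$ depends only on $\deg z\bmod m$, and then compare the value sets on $S$ versus on the support of $z_0$. Your treatment is in fact slightly more careful than the paper's, which tacitly identifies $[A]$ with $1/m\in\QZ$; you instead keep track of the unit $a$ with $[A]=a/m$ and note that multiplication by $a$ is an isomorphism of $\BBZ/m\BBZ$, which is the cleaner way to justify the equivalence between the $\psi$-value condition and the subgroup condition on $\deg z_0$.
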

\begin{proof}
 By Remark \ref{rmk}, we have $\psi([A],z_0)=\deg z_0/{m}\in\prescript{}{m}\br k\simeq\frac{1}{m}\BBZ/\BBZ\subseteq\QZ$. When $z$ is supported in $S$, the value $\psi([A],z)$ is always a multiple of $I(S)/m$, which cannot be equal to $\deg(z_0)/m.$ The condition in Proposition \ref{thm failure of sa} is thus satisfied.
\end{proof}

\begin{eg}\label{example}
Let $X=\P_k^1$ be the projective line over $k=\Qp$. Then the function field $K=\Qp(t)$. Consider the quaternion algebra $A=(p,u)$ over $K$ with $u$ a non-square unit in $\Zp^\times$. Then $[A]\in\prescript{}{2}\br K$ comes from the generator of $\prescript{}{2}\br X=\prescript{}{2}\br k.$ Let $H$ be the semisimple simply connected group $\SL_1(A)$, and consider the stably rational homogeneous space $Z=\SL_n/H$. If $S\subseteq X^{(1)}$ is a subset containing only places of even degrees, for example $(t^2-p),(t^4-p),(t^6-p)$ etc, then strong approximation away from $S$ does not hold for $Z$. In fact, we will see later that strong approximation away from $S$ holds for $Z$ if and if only the index of $S$ is odd, and the reciprocity obstruction is the only one (cf. Theorem \ref{thm rec only one sc}).\end{eg}
\begin{rmk}
Our example shows that we have a different behavior over $p$-adic function fields compared to the classical situations over number fields. For $H$ a semisimple and connected algebraic group defined over a number field $F$, the canonical map $H^1(F,H)\rightarrow \prod_{v\in \Omega_F}H^1(F_v,H)$ is always surjective (cf. \S5.3 of \cite{kneser1969lectures}). The group $\SL_n$ satisfies strong approximation away from a non-empty set $S\subseteq \Omega_F$ of places. A diagram chasing argument shows that strong approximation away from a non-empty $S\subseteq \Omega_F$ always holds for $\SL_n/H$. In our case over a $p$-adic function field $K$, we still have strong approximation away from a non-empty $S\subseteq X^{(1)}$ for $\SL_n$ (cf. Corollary \ref{split groups satisfy SA}), but not for $\SL_n/H$ where $H$ is an absolutely simple simply connected algebraic group. We will give another example with $H$ of outer type $\mathsf A$ (cf. Example \ref{example outer type}).
\end{rmk}

\subsection{Outer type}

Now consider $H$ a simply connected group of outer type $\mathsf A_{n-1}$, i.e. $H=\SU(A,\tau)$, where $A$ is a central simple algebra of degree $n\geq 3$ over a (separable) quadratic extension $L$ of $K$ with a unitary involution $\tau$ which leaves $K$ element-wise invariant. The existence of such a $L/K$-unitary involution on $A$ is equivalent to $\Cor_{L/K}([A])=0\in\br K$ (cf. Theorem 3.1 of \cite{knus1998book}). Let $$\ssym(A,\tau)^\times=\{(s,z)\in A^\times\times L^
\times|\tau(s)=s,\Nrd_A(s)=N_{L/K}(z)\}$$ which is a homogeneous space for $\GL_1(A)$ under the operation given by $x:(s,z)\mapsto(xs\tau(x),z\Nrd_A(x))$. There is an exact sequence $1\rightarrow H\rightarrow \GL_1(A)\rightarrow \ssym(A,\tau)^\times\rightarrow 1$ which then induces the exact sequence $$\GL_1(A_{F\otimes L})\rightarrow \ssym(A_{F\otimes L},\tau)^\times\rightarrow H^1(F,H)\rightarrow H^1(F,\GL_1(A))=\{1\}$$ for any field extensions $F/K$. In other words, there is a canonical bijection between $H^1(F,H)$ and $\ssym(A_{F\otimes L},\tau)^\times/\approx$ where the equivalence relation $\approx$ is defined by $(s,z)\approx(s^\prime,z^\prime)$ if and only of $s^\prime=xs\tau(x)$ and $z^\prime=z\Nrd_{A_{F\otimes L}}(x)$ for some $x\in A_{F\otimes L}^\times$ (cf. (29.18) of \cite{knus1998book} and \S 5.5 of \cite{kneser1969lectures}). 
\par Over the quadratic extension $L/K$, the group  $H$ is isomorphic to $\SL_1(A)$. Under the field extension map, the Rost invariant $R_H$ maps to the Rost invariant $R_{H_L}$. The corestriction map for the field extension $L/K$ takes $R_{H_L}$ to $2R_H$. Using the formula of the Rost invariant for $\SL_1(A)$, we have
\[2R_H(s,z)=\Cor_{F\otimes L/F}((z)\cup[A_{F\otimes L}])\in H^3(F,\QZ(2))\]
over a field extension $F/K$, where $(s,z)\in\ssym(A_{F\otimes L},\tau)^\times/\approx.$

\par
We investigate different places $v\in X^{(1)}$ in terms of their ramification types. If $L_v:=K_v\otimes L$ is a field, then the valuation of $v$ on $K_v$ extends uniquely to $L_v$ (which we denote by $\tilde v$); moreover, if the residue field $\kappa(\tilde v)$ of $L_v$ is a field extension of degree $2$ (resp. $1$) of the residue field $\kappa(v)$ of $K_v$, we call such a place \textit{inert} (resp. \textit{ramified}). For an inert place $v$, the field extension $L_v/K_v$ is unramified. If $L_v$ is not a field, then we have $L_v\simeq K_v\times K_v$, and such a place is called \textit{totally split}.
\par
If $v$ is totally split, then $L$ embeds into $K_v$. The group $H_{K_v}$ is of inner type, and equals $\SL_1(A_{K_v}).$ We can thus apply our results in \S\ref{inner type}. In particular, the order of the Rost invariant $R_{H_{K_v}}$ equals the exponent $m_v$ of $A_{K_v}$. If $[A_{K_v}]$ is unramified at $v$, we can show that there is a bijection $H^1(K_v,H)\simeq \Z/m_v\Z$ as in Proposition \ref{cyclic}.
\begin{prop}
For an inert place $v$ such that $[A_{L_v}]$ is unramified at $\tilde v$, the image of $2R_H:H^1(K_v,H)\rightarrow H^3(K_v,\QZ(2))$ is $0$. 
\end{prop}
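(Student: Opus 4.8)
The plan is to reduce the statement to a residue computation in the residue fields via the isomorphism $\partial_v\colon H^3(K_v,\QZ(2))\xrightarrow{\sim}\br(\kappa(v))$. Specializing the formula for $2R_H$ to $F=K_v$ (so that $F\otimes L=L_v$), a class in $H^1(K_v,H)$ represented by $(s,z)\in\ssym(A_{L_v},\tau)^\times$ satisfies
\[2R_H(s,z)=\Cor_{L_v/K_v}\big((z)\cup[A_{L_v}]\big)\in H^3(K_v,\QZ(2)).\]
Since $\partial_v$ is an isomorphism, it suffices to prove that $\partial_v\big(2R_H(s,z)\big)=0$ in $\br(\kappa(v))$ for every such $z\in L_v^\times$.

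First I would invoke the compatibility of the residue map with corestriction for the \emph{unramified} extension $L_v/K_v$ (whose ramification index is $1$ because $v$ is inert), namely $\partial_v\circ\Cor_{L_v/K_v}=\Cor_{\kappa(\tilde v)/\kappa(v)}\circ\partial_{\tilde v}$. This reduces the problem to computing $\partial_{\tilde v}\big((z)\cup[A_{L_v}]\big)\in\br(\kappa(\tilde v))$. Since $v$ is inert, a uniformizer $\pi_v$ of $K_v$ remains a uniformizer of $L_v$; writing $z=u\pi_v^{n}$ with $u$ a unit and $n=\tilde v(z)$, and expanding the cup product exactly as in the proof of Proposition \ref{comp}, the hypothesis that $[A_{L_v}]$ is unramified at $\tilde v$ forces the unit contribution $\partial_{\tilde v}\big((u)\cup[A_{L_v}]\big)$ to vanish (Lemma 6.8.4 of \cite{gille2017central}) and identifies the uniformizer contribution with the specialization of $[A_{L_v}]$ at $\tilde v$, which is independent of the choice of $\pi_v$ (Proposition 6.8.7 and Corollary 6.8.8 of \cite{gille2017central}). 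This yields $\partial_{\tilde v}\big((z)\cup[A_{L_v}]\big)=n\cdot\tilde v^*([A_{L_v}])$, where $\tilde v^*([A_{L_v}])\in\br(\kappa(\tilde v))$ denotes the value of the unramified class $[A_{L_v}]$.

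It then remains to check that $\Cor_{\kappa(\tilde v)/\kappa(v)}\big(\tilde v^*([A_{L_v}])\big)=0$. Here I would use the existence of the unitary involution $\tau$, which is equivalent to $\Cor_{L/K}([A])=0$ and hence gives $\Cor_{L_v/K_v}([A_{L_v}])=0$ after base change. Because corestriction along an unramified extension preserves unramified classes and is compatible with the specialization identifications of the unramified subgroups of $\br(L_v)$ and $\br(K_v)$ with $\br(\kappa(\tilde v))$ and $\br(\kappa(v))$ respectively, the vanishing $\Cor_{L_v/K_v}([A_{L_v}])=0$ transports to $\Cor_{\kappa(\tilde v)/\kappa(v)}\big(\tilde v^*([A_{L_v}])\big)=0$. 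Assembling the chain gives $\partial_v\big(2R_H(s,z)\big)=n\cdot\Cor_{\kappa(\tilde v)/\kappa(v)}\big(\tilde v^*([A_{L_v}])\big)=0$, so $2R_H(s,z)=0$ for every class, i.e. the image of $2R_H$ is trivial.

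The main obstacle I expect is the careful bookkeeping of three standard but error-prone compatibilities in the framework of \cite{gille2017central}: the residue--corestriction formula for an unramified extension (where I must confirm that the ramification index contributes no factor), the uniformizer-independence of the specialization for an unramified Brauer class, and the compatibility of corestriction with the specialization identification of unramified classes. I would pin down the precise sign and normalization conventions of the specialization maps against Construction 6.8.6 there before chaining the equalities, since these are exactly the points where a stray factor could invalidate the final vanishing.
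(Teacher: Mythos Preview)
Your proposal is correct and follows essentially the same approach as the paper: both arguments apply $\partial_v$, invoke the residue--corestriction compatibility $\partial_v\circ\Cor_{L_v/K_v}=\Cor_{\kappa(\tilde v)/\kappa(v)}\circ\partial_{\tilde v}$, compute $\partial_{\tilde v}((z)\cup[A_{L_v}])=\tilde v(z)\cdot\tilde v^*([A_{L_v}])$, and then kill this via $\Cor_{L_v/K_v}([A_{L_v}])=0$ transported to the residue fields. The only cosmetic difference is that the paper packages both compatibilities (for $i=2,j=1$ and $i=3,j=2$) into a single commutative diagram cited from \cite{garibaldi2003cohomological}, whereas you unpack them individually following the template of Proposition~\ref{comp} and cite \cite{gille2017central}.
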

\begin{proof}
We use the diagram in Proposition 8.6 of \cite{garibaldi2003cohomological} which is commutative with exact rows:
\[\begin{tikzcd}
1\arrow[r]& H^i(\kappa(\tilde v),\mu_m^{\otimes j})\arrow[d,"e\cdot\Cor_{\kappa(\tilde v)/\kappa( v)}"]\arrow[r]&H^i(L_v,\mu_m^{\otimes j})\arrow[d,"\Cor_{L_v/K_v}"]\arrow[r,"\partial_{\tilde v}"]& H^{i-1}(\kappa(\tilde v),\mu_m^{\otimes(j-1)})\arrow[d,"\Cor_{\kappa(\tilde v)/\kappa( v)}"]\arrow[r]&1\\
1\arrow[r]& H^i(\kappa(v),\mu_m^{\otimes j})\arrow[r]&H^i(K_v,\mu_m^{\otimes j})\arrow[r, "\partial_{v}"]&H^{i-1}(\kappa(v),\mu_m^{\otimes(j-1)})\arrow[r]&1
\end{tikzcd}\addtag\label{inf residue split exact diagram}\]
where the ramification index $e=1$ in our case.
Since $[A_{L_v}]$ is unramified at $\tilde v$, it comes from a certain $\bar\alpha\in\br(\kappa(\tilde v))$ in the top row of the diagram. The condition $\Cor_{L_v/K_v}([A_{L_v}])=0$ then gives $\Cor_{\kappa(\tilde v)/\kappa (v)}(\bar\alpha)=0$ by taking $i=2$ and $j=1$ in the left square of the diagram. Now we compose $2R_H$ with the isomorphism $\partial_{v}: H^3(K_v,\QZ(2))\xrightarrow{\simeq}\br\kappa(v)$ and we get 
\[\partial_v(\Cor_{L_v/K_v}((z)\cup [A_{L_v}]))=\Cor_{\kappa(\tilde v)/\kappa (v)}(\partial_{\tilde v}((z)\cup [A_{L_v}] ))=\Cor_{\kappa(\tilde v)/\kappa (v)}(\bar\alpha^{\tilde v(z)})=0\]
by taking $i=3$ and $j=2$ in the right square of the diagram, where we denote by $\tilde{v}(z)$ the valuation of $z$ with respect to $\tilde v$.
\end{proof}
\begin{cor}\label{cor outer type}
Suppose there is a totally split place $v_0$ such that $[A_{K_{v_0}}]$ is unramified at $v_0$ with exponent $\geq 3$. Let $S\subseteq X^{(1)}$ be a finite set containing inert places $v$ such that $[A_{L_v}]$ is unramified at $\tilde {v}$. Then $$H^1(K,H)\rightarrow\bigoplus_{v\in X^{(1)}\backslash S} H^1(K_v,H)$$ is not surjective. As a result, strong approximation away from $S$ does not hold for $Z=E/H$ which is a classifying variety of $H$. 
\end{cor}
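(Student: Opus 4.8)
The plan is to reproduce the mechanism of Theorem~\ref{thm failure of sa}, now with the Rost invariant $R_H$ of the outer-type group $H=\SU(A,\tau)$, and to exploit the contrast between the totally split place $v_0$ and the inert places collected in $S$. First I would set up the commutative diagram analogous to (\ref{diagram}), whose top row is the reciprocity complex (\ref{Weil}) and whose bottom row is $R_H$ followed by the localization maps, defining the composite $f_H\colon\bigoplus_{v\in X^{(1)}}H^1(K_v,H)\to\QZ$. The whole argument then reduces to producing an adelic class supported away from $S$ whose image under $f_H$ is not $2$-torsion, since, by the preceding Proposition, all contributions coming from $S$ are forced to be $2$-torsion and so can never cancel it.

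For the construction, note first that $v_0\notin S$: the place $v_0$ is totally split whereas every place of $S$ is inert. At $v_0$ the group $H_{K_{v_0}}$ is of inner type and equals $\SL_1(A_{K_{v_0}})$, so the discussion preceding the statement (together with Propositions~\ref{comp} and~\ref{cyclic}) applies, giving $H^1(K_{v_0},H)\simeq\Z/m_{v_0}\Z$ and the inner-type formula for $R_H$, where $m_{v_0}\geq 3$ is the exponent of $A_{K_{v_0}}$. I would take $a_{v_0}=[\pi_{v_0}]$, the class of a uniformizer, and set all other components (for $v\notin S$, $v\neq v_0$) equal to $0$, obtaining $a=(a_v)_{v\notin S}$. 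By the computation in Proposition~\ref{comp},
\[
f_H(a)=\Cor_{\kappa(v_0)/k}\bigl(\partial_{v_0}(R_H([\pi_{v_0}]))\bigr)=\Cor_{\kappa(v_0)/k}\bigl(v_0^*([A_{K_{v_0}}])\bigr),
\]
the corestriction of the specialization of $[A_{K_{v_0}}]$ at $v_0$. Since corestriction is compatible with the invariant maps of the local fields $\kappa(v_0)$ and $k$, this value equals $\inv_{\kappa(v_0)}(v_0^*([A_{K_{v_0}}]))$, which has order equal to the exponent of $A_{K_{v_0}}$, namely $\geq 3$; in particular $f_H(a)$ is not $2$-torsion. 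Verifying that this order survives the corestriction down to $\br k$ is the step I expect to require the most care.

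Next I would extract the obstruction. Suppose $a$ lies in the image of $H^1(K,H)\to\bigoplus_{v\notin S}H^1(K_v,H)$, say $a=(\alpha_v)_{v\notin S}$ for some global $\alpha\in H^1(K,H)$, and write $\alpha_v$ for its remaining localizations at $v\in S$. Because the top row of the diagram is the complex (\ref{Weil}), the global class $R_H(\alpha)$ is annihilated by $\Sigma$, which yields
\[
f_H(a)+\sum_{v\in S}\Cor_{\kappa(v)/k}\bigl(\partial_v(R_H(\alpha_v))\bigr)=0.
\]
By the preceding Proposition, $2R_H=0$ on $H^1(K_v,H)$ for every inert $v\in S$ with $[A_{L_v}]$ unramified at $\tilde v$, so each summand on the left is $2$-torsion and hence so is their sum; this contradicts $f_H(a)$ having order $\geq 3$. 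Therefore $a$ is not in the image, and $H^1(K,H)\to\bigoplus_{v\notin S}H^1(K_v,H)$ is not surjective.

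Finally, the passage to failure of strong approximation is the diagram chase at the end of Theorem~\ref{thm failure of sa}. Serre's Conjecture~II over the $p$-adic residue fields $\kappa(v)$ gives $H^1(\mathcal O_v,\mathcal H)=H^1(\kappa(v),H)=0$, so $\bigoplus_{v\notin S}H^1(K_v,H)$ is identified with the restricted product $\BP^1_S(H)$ and fits into the commutative square with exact rows relating $Z(K)\to H^1(K,H)$ and $Z(\BA_K^S)\to\bigoplus_{v\notin S}H^1(K_v,H)$. The latter map is locally constant, so the class $a$ constructed above, not lying in the image of $H^1(K,H)$, has an open preimage $V\subseteq Z(\BA_K^S)$ disjoint from the image of $Z(K)$. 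Hence $Z(K)$ is not dense and strong approximation away from $S$ fails for $Z=E/H$.
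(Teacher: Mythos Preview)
Your proof is correct and follows essentially the same approach as the paper's: construct the adelic class supported at $v_0$ via the uniformizer, use Proposition~\ref{comp} to see that $f_H(a)$ has order $\geq 3$, invoke the preceding Proposition to force the $S$-contributions to be $2$-torsion, and finish with the diagram chase from Theorem~\ref{thm failure of sa}. Your concern about the order surviving corestriction is unfounded, as you yourself note: compatibility of $\Cor_{\kappa(v_0)/k}$ with the invariant maps makes this immediate.
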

\begin{proof}
Proposition \ref{cyclic} applies to $H^1(K_{v_0},H)=H^1(K_{v_0},\SL_1(A))$ and we know that it is in fact a cyclic group of order equal to the exponent of $[A_{K_{v_0}}]$. Construct $$(a_v)_v\in\oplus_{v\in X^{(1)}\backslash S}H^1(K_v, H)$$ such that $$a_{v_0}=[\pi_{v_0}]\in K_{v_0}^\times/\Nrd(A_{K_{v_0}})\simeq H^1(K_{v_0},H)$$ for a uniformizer $\pi_{v_0}$ of $K_{v_0}$ and $a_v=0$ elsewhere. Then apply Proposition \ref{comp} and we get $f_H(a)$ is of order $\geq 3$. For $v\in S$, the images of $H^1(K_v,H)$ under $f_H$ only give elements of order at most $2$, and thus cannot cancel out $f_H(a)$. The same diagram chasing argument as in Theorem \ref{thm failure of sa} then gives the result.
\end{proof}
There are only finitely many places left: those $v$ such that the field extension $L_v/K_v$ is ramified, or $[A_{L_v}]$ is ramified at $\tilde v$. In fact, these two conditions cannot hold at the same time. \par If $L_v/K_v$ is ramified, then by Lemma 6.3 of \cite{parimala2022local}, we have $[A_{L_v}]=\alpha_0 \otimes L_v$ for some unramified $\alpha_0\in\br_\nr(K_v/K).$ The compatibility between residue maps and restriction maps (cf. Proposition 1.4.6 of \cite{colliot2021brauer}) shows that $[A_{L_v}]$ is unramified at $\tilde v$. Moreover, the exponent of $[A_{L_v}]$ is at most $2$ because \[0=\Cor_{L_v/K_v}([A_{L_v}])=\Cor_{L_v/K_v}(\Res_{L_v/K_v}(\alpha_0))=2\alpha_0.\]

\begin{lem}\label{lemma structure of h1 h2 kv}
For $v\in X^{(1)}$, we have $$H^1(K_v,\mu_m)\simeq K_v^\times/ K_v^{\times m}=\{\pi^{r}\delta^{s}u^{t}|(r,s,t)\in(\Z/m\Z)^3\}\simeq (\Z/m\Z)^3$$
where $\pi$ is a uniformizer for $K_v$, the element $\delta\in K_v$ is a unit whose image $\bar \delta\in\kappa (v)$ is a uniformizer for $\kappa(v)$, and $u$ is a unit in $K_v$ whose image $\bar u\in\kappa(v)$ is a unit of order $m$ in $\kappa(v^\times)/\kappa(v)^{\times m}$.\par 
If $K_v$ contains a primitive $m$th root of unity $\omega$ (e.g. when $m|(p-1)$), then we have
\[H^2(K_v,\mu_m)\simeq \prescript{}{m}{\br(K_v)}=\{r(\delta,u)_\omega+s(\pi,\delta)_\omega+t(\pi,u)_\omega|(r,s,t)\in(\Z/m\Z)^3\}\simeq (\Z/m\Z
)^3\]
where $(a,b)_\omega$ is a cyclic algebra of degree $m$.
\end{lem}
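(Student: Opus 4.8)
The plan is to deduce both isomorphisms from Kummer theory and then compute the two groups explicitly. The key structural fact is that $K_v$ is a complete discretely valued field of \emph{equal characteristic $0$}: its residue field $\kappa(v)$ is $p$-adic, hence of characteristic $0$, so by Cohen's structure theorem $K_v\cong\kappa(v)((t))$ and I may take $\pi=t$. Kummer theory then gives $H^1(K_v,\mu_m)\cong K_v^\times/K_v^{\times m}$, and, since the Picard group of a field vanishes, $H^2(K_v,\mu_m)\cong\prescript{}{m}{\br(K_v)}$. Thus both parts reduce to determining the group structure of $K_v^\times/K_v^{\times m}$ and of $\prescript{}{m}{\br(K_v)}$ and to exhibiting the asserted generators. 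Throughout I work under $p\nmid m$ with $\mu_m\subseteq K_v$ (equivalently $m\mid q-1$, where $q=\#\Fq$ is the cardinality of the residue field of $\kappa(v)$); this is exactly what forces both groups to be elementary abelian of exponent $m$, and it is implicit already in the existence of a unit $u$ of order $m$ and, in the second part, in the hypothesis $\omega\in K_v$.

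For the first part I would peel off the two valuations in turn. Writing $K_v^\times=t^{\Z}\times\mathcal{O}_v^\times$, the principal units $1+t\kappa(v)[[t]]$ form a uniquely divisible group in equal characteristic $0$ (one extracts $m$-th roots by the binomial series, which converges $t$-adically), so reduction induces an isomorphism $\mathcal{O}_v^\times/m\xrightarrow{\sim}\kappa(v)^\times/m$. Repeating the argument one level down for the $p$-adic field $\kappa(v)$: its principal units form a pro-$p$ group, hence are $m$-divisible since $p\nmid m$, so $\mathcal{O}_{\kappa(v)}^\times/m\cong\Fq^\times/m\cong\Z/m$, generated by a Teichm\"uller unit. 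Adding the uniformizer direction of $\kappa(v)$ gives $\kappa(v)^\times/m\cong(\Z/m)^2$. Tracing the generators back, $\pi=t$ spans the top valuation direction, $\delta$ (reducing to a uniformizer of $\kappa(v)$) spans the residue-valuation direction, and $u$ (reducing to a generator of $\Fq^\times/m$) spans the last, yielding $K_v^\times/K_v^{\times m}\cong(\Z/m)^3$ with the stated basis.

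For the second part I would exploit the split residue exact sequence for the complete discretely valued field $K_v$ with residue field $\kappa(v)$, taken at the $t$-adic valuation and untwisted using $\omega$ (this is the bottom row of \eqref{inf residue split exact diagram} with $i=j=2$):
\[0\rightarrow\prescript{}{m}{\br(\kappa(v))}\xrightarrow{\Inf}\prescript{}{m}{\br(K_v)}\xrightarrow{\partial_t}\kappa(v)^\times/m\rightarrow0.\]
It then remains to place the three symbols. Since $\delta$ and $u$ are $t$-adic units, $(\delta,u)_\omega$ has trivial tame symbol, lies in $\ker\partial_t$, and is the inflation of $(\bar\delta,\bar u)_\omega\in\prescript{}{m}{\br(\kappa(v))}$; as $\bar\delta$ is a uniformizer and $\bar u$ a unit of order $m$ of the $p$-adic field $\kappa(v)$, this symbol generates $\prescript{}{m}{\br(\kappa(v))}\cong\Z/m$ (local class field theory, or one further residue sequence down to $\br(\Fq)=0$). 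On the other hand the tame symbols of $(\pi,\delta)_\omega$ and $(\pi,u)_\omega$ are $\bar\delta$ and $\bar u$, which are precisely the basis of $\kappa(v)^\times/m\cong(\Z/m)^2$ produced in the first part. Split exactness then forces the three symbols to be a basis of $\prescript{}{m}{\br(K_v)}\cong(\Z/m)^3$.

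The genuinely load-bearing inputs are the unique divisibility of the principal units of $K_v$ (special to equal characteristic $0$, and what decouples the $t$-direction from the residue field) and the split residue sequence together with its compatibility with tame symbols; the remaining identification of generators is bookkeeping with the two tame symbols and with local class field theory for $\kappa(v)$. The step I expect to require the most care is the simultaneous handling of the two distinct valuations (the $t$-adic one on $K_v$ and the $\varpi$-adic one on $\kappa(v)$) and keeping the twists $\mu_m^{\otimes j}$ straight when untwisting by $\omega$; fortunately the diagram \eqref{inf residue split exact diagram} can be reused essentially verbatim at each of the two levels, so no new residue computation is needed beyond those already recorded.
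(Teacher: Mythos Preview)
Your argument is correct and, for the $H^2$ part, essentially identical to the paper's: both of you invoke the split residue sequence \eqref{inf residue split exact diagram} at level $i=2$, identify $(\delta,u)_\omega$ as the inflated generator of $\prescript{}{m}{\br(\kappa(v))}$, and read off $(\pi,\delta)_\omega$, $(\pi,u)_\omega$ as lifts of the basis $\bar\delta,\bar u$ of $\kappa(v)^\times/m$ via the residue (tame symbol). The paper additionally writes down the retraction explicitly as the specialization map $s^2_\pi$ and checks the two formulas $s^2_\pi$ and $\partial_v$ on the three symbols, but this is the same computation you describe.

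For $H^1$ the content is the same but the packaging differs. The paper simply applies the split sequence \eqref{inf residue split exact diagram} twice, at $(i,j)=(1,1)$ for $K_v$ over $\kappa(v)$ and then for $\kappa(v)$ over $\BBF_q$, and reads off $(\Z/m)^3$ together with the generators. You instead invoke Cohen's structure theorem $K_v\cong\kappa(v)((t))$ and argue by direct divisibility of principal units (binomial series in equal characteristic $0$, pro-$p$ structure for $\kappa(v)$ with $p\nmid m$). Your route is more elementary and self-contained, at the cost of an extra structural input (Cohen); the paper's route is more uniform, reusing a single diagram already in play, but leaves the divisibility of principal units hidden inside the cited splitting. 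Your explicit remark that the hypothesis $\mu_m\subseteq K_v$ (equivalently $m\mid q-1$, hence $p\nmid m$) is already forced in the first part by the existence of the unit $u$ of exact order $m$ is a useful clarification that the paper leaves implicit.
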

\begin{proof}
The exact rows in (\ref{inf residue split exact diagram}) are in fact split (cf. Corollary 6.8.8 of \cite{gille2017central}). Taking $i=1, j=1$ gives the first statement, noting that 
\[H^1(\kappa(v),\mu_m)\simeq \kappa(v)^\times/\kappa(v)^{\times m}=\{\bar \delta^s \bar u^t|(s,t)\in(\Z/m\Z)^2 \}\simeq (\Z/m\Z)^2\]
which is obtained by applying the same exact sequence to the complete discretely valued field $\kappa(v)$.
\par A primitive $m$th root of unity $\omega$ induces an isomorphism $\mu_m\simeq \Z/m\Z$. The group $H^2(\kappa(v),\mu_m)\simeq \prescript{}{m}{\br(\kappa(v))}\simeq\Z/m\Z$ is then generated by the cyclic algebra $(\bar\delta,\bar u)_{\bar\omega}$, where $\bar\omega$ denotes the image of $\omega$ in the residue field $\kappa(v)$. Take $i=2,j=1$ in the split exact sequence (\ref{inf residue split exact diagram}). A retract of the injection $\prescript{}{m}{\br\kappa(v)}\hookrightarrow \prescript{}{m}{\br K_v}$ is given by the specialization map $s^2_{\pi}:a\mapsto \partial_v((-\pi)\cup a)$. The calculations  
\[s_\pi^2(r(\delta,u)_\omega+s(\pi,\delta)_\omega+t(\pi,u)_\omega)=r(\bar\delta,\bar u)_{\bar\omega},\]
\[\partial_v(r(\delta,u)_\omega+s(\pi,\delta)_\omega+t(\pi,u)_\omega)=\bar\delta ^s\bar u ^t\]
give the second statement.
\end{proof}
If the field extension $L_v/K_v$ is unramified and $[A_{L_v}]$ has index $n\geq 3$, then by Proposition 6.6 of \cite{parimala2022local}, there is a primitive $n$th root of unity $\rho$ in $L_v$ such that $N_{L_v/K_v}(\rho)=1$ and $[A_{L_v}]=(\pi,\delta)_\rho$, where $\pi$ and $\delta$ are as in Lemma \ref{lemma structure of h1 h2 kv}. In particular, $[A_{L_v}]$ is ramified at $\tilde v$. There exists an $L_v/K_v$-unitary involution $\tau$ on $(\pi,\delta)_\rho$ such that $\tau$ leaves $i$ and $j$ invariant, where $i$ and $j$ are generators of $(\pi,\delta)_\rho$ such that $i^{n}=\pi,j^{n}=\delta,ij=\rho ji$.

\begin{eg}\label{example outer type}
Let $X=\BP_k^1$ be the projective line over $k$ with function field $K=k(t)$. Let $k^\prime=k[T]/g(T)$ be an unramified quadratic extension of $k$, such that there is a primitive $m$th root of unity $\rho$ with $N_{k^\prime/k}(\rho)=1$ and $m\geq 3$. Examples include $k^\prime =k[T]/(T^2+T+1)$ over $k=\BBQ_5$ with $\rho$ a primitive $3$rd root of unity, and $k^\prime=k[T]/(T^2-\frac{1+\sqrt 5}{2}T+1)$ over $k=\BBQ_3(\sqrt{-7})$ with $\rho$ a primitive $5$th root of unity, etc. Let $L=k^\prime (X_{k^{\prime}})=k^\prime(t)$ which is a quadratic extension of $K$. Let $\delta$ be a uniformizer of $k$, which stays a uniformizer of $k^\prime$ since the extension $k^\prime/k$ is supposed to be unramified. Let $u\in k^\prime$ be a unit with order $m$ in $k^{\prime\times}/k^{\prime\times m}$. Consider the cyclic $L$-algebra $A=(\delta,u)_{\rho}$. The non-trivial $L/K$-automorphism extends to an $L/K$-unitary involution $\tau$ on $A$. Let $H=\SU(A,\tau)$ and $Z=E/H$ be a classifying variety. The place $v_0=(g(t))$ is totally split, and $H^1(K_{v_0},H)$ is cyclic of order $m$. There are no ramified places or places $v$ where $[A_{L_k}]$ is ramified. Let $S$ be a subset of places $v$ where $L_v/K_v$ are (unramified) field extensions. Then strong approximation away from $S$ does not hold for $Z$.
\end{eg}

\section{Reciprocity obstruction to strong approximation}
We establish a reciprocity obstruction to strong approximation. This obstruction for classifying varieties is related to the degree $3$ cohomological invariants. As a result, the failure of strong approximation constructed in Section \ref{failure} is explained by this reciprocity obstruction. We prove that this reciprocity obstruction to strong approximation is the only one in certain situations, including the case of Example \ref{example}, and classifying varieties of tori. We also explain that for $K$-tori, this reciprocity obstruction to strong approximation is compatible with the known results in \cite{harari2019espace}.
\subsection{Defining the reciprocity obstruction}\label{section def rec obs}
Let $Z$ be a smooth geometrically integral $K$-variety. As a consequence of Gersten's conjecture for étale cohomology proved by Bloch and Ogus (cf. \cite{bloch1974gersten}), we have:
\begin{thm-df}[cf. Theorem 4.1.1 of \cite{colliot1995birational}, see also (1.1) of \cite{colliot2013descente}]\label{def}The following subgroups of $H^3(K(Z),\QZ(2))$ coincide, that we define to be $H^3_{\nr}(Z,\QZ(2))$:
\begin{enumerate}[label=(\arabic*)]
    \item the group $H_\Zar^0(Z,\mathcal H^3(\QZ(2)))$ of global sections of the Zariski sheaf $\mathcal H^3(Z,\QZ(2))$ which is the sheaf associated to the Zariski presheaf $U\mapsto H^3_\et(U,\QZ(2))$;
    \item the group of elements $\alpha\in H^3(K(Z),\QZ(2))$ that are unramified with respect to any codimension $1$ point $P$ of $Z$, i.e. we have $\partial_{\mathcal O_{Z,P}}(\alpha)=0\in H^2(\kappa(P),\QZ(1))$ or equivalently $\alpha$ comes from a class in $H^3(\mathcal O_{Z,P},\QZ(2))$;
    \item\label{3} the group of elements in $H^3(K(Z),\QZ(2))$ which at any point $P\in Z$ come from a class in $H^3(\mathcal O_{Z,P},\QZ(2))$.
\end{enumerate}
\end{thm-df}
\begin{rmk}
Since we do not suppose $Z$ to be proper, the group $H^3_\nr(Z,\QZ(2))$ can be strictly bigger than its subgroup $H^3_\nr(K(Z)/K,\QZ(2))$, which is the group of elements in $H^3(K(Z),\QZ(2))$ that are unramified with respect to all discrete valuations of $K(Z)$ trivial on $K$. Let $Z^c$ be a smooth compactification of $Z$, then we have $H^3_\nr(Z^c,\QZ(2))=H^3_\nr(K(Z)/K,\QZ(2))$. We will see later in Theorem \ref{thm rec only one sc} that for $Z$ in our Construction \ref{cons}, the group $H^3_\nr(Z,\QZ(2))/\Im(H^3(K,\QZ(2)))$ is of order $m$ which accounts for the defect of strong approximation for $Z$, but $$H^3_\nr(K(Z)/K,\QZ(2))/\Im(H^3(K,\QZ(2)))=0$$ since $Z$ is stably birational.
\end{rmk}

Now we define a pairing 
\begin{equation}\label{pairing adelic space H3nr}Z(\BA_K)\times H^3_\nr(Z,\QZ(2))\rightarrow\QZ\end{equation}
in a similar way as in \S2.2 of \cite{colliot2016lois}, where their original definition applies to $H^3_\nr(K(Z)/Z,\QZ(2))$ instead of $H^3_\nr(Z,\QZ(2))$ (see also Lemma 4.1 of \cite{harari2015weak}). Let $\alpha\in H^3_\nr(Z,\QZ(2))$. For any field extension $F/K$ and any point $x\in Z(F)$ with image $P\in Z$, we can lift $\alpha$ to a (unique) element of $H^3(\mathcal O_{Z,P},\QZ(2))$ in virtue of \ref{def}\ref{3}, and define $\alpha(x)$ to be its image under the pullback $H^3(\mathcal O_{Z,P},\QZ(2))\rightarrow H^3(F,\QZ(2))$. This defines a pairing 
\begin{align}\label{pairing at one place}Z(K_v)\times H^3_\nr(Z,\QZ(2))&\rightarrow H^3(K_v,\QZ(2))=\QZ\\
(x,\alpha)&\mapsto \alpha(x)\nonumber.\end{align}
By shrinking the open subset $U\subseteq X$ over which we have the model $\mathcal Z\rightarrow U$ of $Z$, the element $\alpha$ lies in $H^3(\mathcal O_{\mathcal Z,Q},\QZ(2))$ for all but finitely many codimension $1$ points $Q$ of the $k$-variety $\mathcal Z$. By the assumption on $\alpha$, the exceptional $Q$ lie in finitely many closed fibers $\mathcal Z_{v_1}\cdots,\mathcal Z_{v_r}$ of $\mathcal Z\rightarrow U$. Hence for $v\neq v_i$, we have $\alpha\in H^3(\mathcal O_{\mathcal Z,P},\QZ(2))$ for all points $P\in\mathcal Z_v$. Therefore, for $x$ coming from $\mathcal Z(\mathcal O_v)$, we have $\alpha(x)\in H^3(\mathcal O_v,\QZ(2))=H^3(\kappa(v),\QZ(2))=0$, noting that the residue field $\kappa(v)$ has cohomological dimension $2$. We can thus sum up these maps to get a well-defined pairing (\ref{pairing adelic space H3nr}) for the adelic space.
\begin{rmk}
In the particular case when $Z$ is proper (so strong approximation is equivalent to weak approximation), the above pairing (\ref{pairing adelic space H3nr}) becomes 
\[\prod_{v\in X^{(1)}}Z(K_v)\times H^3_{\nr}(K(Z)/K,\QZ(2))\rightarrow \QZ\]
which is exactly the pairing in \S4 of \cite{harari2015weak} giving the reciprocity obstruction to weak approximation. 
\end{rmk}
For a subset $S\subseteq X^{(1)}$, we define the subgroup of ``trivial on $S$'' elements:
\begin{equation*}H^3_{\nr,S}(Z,\QZ(2)):=\ker(H^3_\nr(Z,\QZ(2))
\rightarrow\prod_{v\in S}H^3_\nr(Z_{K_v},\QZ(2))).\end{equation*}
In virtue of the complex (\ref{Weil}) of the generalized Weil reciprocity law, the subset $Z(K)\subseteq Z(\BA_K^S)$ is orthogonal to $H^3_{\nr,S}(Z,\QZ(2))$ with respect to the pairing
\[Z(\BA_K^S)\times H^3_\nr(Z,\QZ(2))\rightarrow\QZ,\quad ((x_v)_{v\notin S},\alpha)\mapsto\sum_{v\notin S}\alpha(x_v).\]
  The same holds for the closure $\overline{Z(K)}^S$ of $Z(K)$ in $Z(\BA_K^S)$ by continuity of the pairing, 
  giving rise to a reciprocity obstruction to strong approximation away from $S$. Modulo constant elements, we can also replace $H^3_{\nr,S}(Z,\QZ(2))$ by
  \[\overline H^3_{\nr,S}(Z,\QZ(2)):= H^3_{\nr,S}(Z,\QZ(2))/\Im(\ker(H^3(K,\QZ(2))\rightarrow \prod_{v\in S}H^3(K_v,\QZ(2))))\]
which is a subgroup of $\overline H^3_{\nr}(Z,\QZ(2)):=H^3_{\nr}(Z,\QZ(2))/\Im(H^3(K,\QZ(2)))$. We say that \textit{the reciprocity obstruction to strong approximation away from $S$ is the only one} if the subset of elements in $Z(\BA_K^S)$ orthogonal to $H^3_{\nr,S}(Z,\QZ(2))$ (or equivalently $\overline H^3_{\nr,S}(Z,\QZ(2))$) equals $\overline{Z(K)}^S.$


\begin{rmk}

In the study of local-global principal for torsors under tori over $K$, Harari and Szamuely (cf. \cite{harari2013local}) used the reciprocity obstruction given by a different pairing $$Z(\BA_K)\times H^3(Z,\QZ(2))\rightarrow\QZ.$$ This pairing is compatible with our pairing (\ref{pairing adelic space H3nr}) via a map $H^3(Z,\QZ(2))\rightarrow H^3_\nr(Z,\QZ(2))$ that we describe now.  By results of Bloch and Ogus (\cite{bloch1974gersten}), the spectral sequence \[E_2^{p,q}=\hz^p(Z,\CH^q(\mu_m^{\otimes j}))\implies H^{p+q}_\et(Z,\mu_m^{\otimes j})\] gives rise to an exact sequence
\[H^3(Z,\mu_m^{\otimes j})\rightarrow H^0(Z,\CH^3(\mu_m^{\otimes j}))
\rightarrow H^2(Z,\CH^2(\mu_m^{\otimes j}))\rightarrow H^4(Z,\mu_m^{\otimes 2})\]
which for $j=2$ can be rewritten as
\[H^3(Z,\mu_m^{\otimes 2})\rightarrow H^3_\nr(Z,\mu_m^{\otimes 2})
\rightarrow\CHOW^2(Z)/m\rightarrow H^4(Z,\mu_m^{\otimes 2})\]
where $\CHOW^2(Z)$ denotes the second Chow group of codimension $2$ cycles modulo rational equivalence. When $\CHOW^2(Z)/m$ vanishes, the obstructions given by $H^3(Z,\mu_m^{\otimes 2})$ and $H^3_\nr(Z,\mu_m^{\otimes 2})$ are the same. In general, the obstruction given by $H^3_\nr(Z,\QZ(2))$ is potentially more restrictive than the obstruction given by $H^3_\nr(Z,\QZ(2))$. 

\end{rmk}
\begin{rmk}\label{rmk new def of obstruction}
In the literature of strong approximation problems away from $S$ over a number field $F$, the Brauer-Manin obstruction used to be defined using the projection $\pi_S:Z(\BA_F)\rightarrow Z(\BA_F^S)$, saying that the Brauer-Manin obstruction to strong approximation away from $S$ is the only one if $Z(F)$ is dense in $\pi_S(Z(\BA)^{\br})$. Demeio introduced the modified Brauer group of ``trivial on $S$'' elements (cf. \S6 of \cite{demeio2022etale})
\[\br_S Z:=\Ker(\br Z\rightarrow\prod_{v\in S}\br Z_{F_v})\]
and the corresponding Brauer set $Z(\BA_F^S)^{\br_S}$ (for which we can also replace $\br_S Z$ by $\br_S Z/\Im(\br_S F)$). This gives a less restrictive obstruction compared to the one defined using projection, since $\pi_S(Z(\BA_F)^{\br})\subseteq Z(\BA_F^S)^{\br_S}$; moreover, we have $\overline{Z(F)}^S\subseteq Z(\BA_F^S)^{\br_S}$ while $\pi_S(Z(\BA_F)^{\br})$ can be strictly smaller than $\overline{Z(F)}^S$ (cf. Proposition 4.8 of \cite{demeio2022etale}). We adopt this point of view and we will show later (cf. Theorem \ref{thm rec only one sc} and \ref{thm rec only one for SLn/tori}) that in our case, the group $\overline H^3_{\nr,S}(Z,\QZ(2))$ of ``trivial on $S$'' elements cuts out exactly the closure $\overline{Z(K)}^S$ with respect to the reciprocity obstruction.
\end{rmk}

\subsection{Application to classifying varieties}
Now we calculate this reciprocity obstruction we just defined for classifying varieties. Let $H$ be an algebraic group over $K$. Let $Z=E/H$ be a classifying variety where the special rational group $E$ is split semisimple simply connected (for example, $E=\SL_n$ or $\Sp_{2n}$).

The following morphism is defined by evaluating an invariant $I\in \Inv^3(H,\QZ(2))$ at the generic $H$-torsor $\xi^*E$ (the fiber of the $H$-torsor $E\rightarrow Z$ above the generic point $\xi\in Z$): 
$$\Inv^3(H,\QZ(2))\rightarrow H^3(K(Z),\QZ(2)).$$ 
Rost proved that this map is injective, and more precisely, there is an isomorphism
\begin{equation}\label{theta}\theta:\Inv^3(H,\QZ(2))\simeq H^3_\nr(Z,\QZ(2))\end{equation} (cf. Part 2, Theorem 3.3 and Part 1, Appendix C of \cite{garibaldi2003cohomological}). The decomposition $$\Inv^3(H,\QZ(2))=\Inv^3(H,\QZ(2))_\rmnorm\oplus H^3(K,\QZ(2))$$ then induces an isomorphism 
\[\begin{split}H^3_{\nr,S}(Z,\QZ(2))\simeq \ker(\Inv^3(H,\QZ(2))_\rmnorm\rightarrow\prod_{v\in S} \Inv^3(H_{K_v},\QZ(2))_\rmnorm)\\
\oplus \ker(H^3(K,\QZ(2))\rightarrow \prod_{v\in S}H^3(K_v,\QZ(2)))\end{split}\]
and modulo the constant elements,
 we get 
 \[\overline H^3_{\nr,S}(Z,\QZ(2))\simeq \ker(\Inv^3(H,\QZ(2))_\rmnorm\rightarrow\prod_{v\in S} \Inv^3(H_{K_v},\QZ(2))_\rmnorm).\addtag\label{theta mod constant}\]

\begin{prop}\label{prop inv rec compatible}
There is a well-defined pairing
\begin{align}\label{inv pairing formula}Z(\BA_K^S)\times\Inv^3(H,\QZ(2))_\rmnorm&\rightarrow \QZ \\
((x_v)_v,I) &\mapsto \sum_{v\in X^{(1)}\backslash S}\Cor_{\kappa(v)/k}(\partial_v(I_{K_v}(x_v^*E)))\nonumber\end{align}
induced by evaluating an invariant $I$ at the $H$-torsors $x_v^*E$ over $K_v$, where $x_v^*E$ denotes the fiber of the $H$-torsor $E\rightarrow Z$ above the point $x_v\in Z(K_v)$. 
This pairing (\ref{inv pairing formula}) is compatible with (\ref{pairing adelic space H3nr}) in the sense of the commutative diagram:
\[
\begin{tikzcd}
Z(\BA_K^S) \arrow[d,Rightarrow, no head] \arrow[r,phantom,"\times" description] & {\Inv^3(H,\QZ(2))_\rmnorm} \arrow[r] \arrow[d, "\simeq","\theta"' ]       & \QZ \arrow[d,Rightarrow, no head] \\
Z(\BA_K^S)                                \arrow[r,phantom,"\times" description] & {\overline H^3_\nr(Z,\QZ(2))} \arrow[r]  & \QZ.                               
\end{tikzcd}
\]
\end{prop}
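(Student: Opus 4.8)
The plan is to reduce the entire statement to a single \emph{per-place} identity and then deduce both the well-definedness of (\ref{inv pairing formula}) and the commutativity of the diagram formally. Concretely, I claim that for every $v\in X^{(1)}\backslash S$ and every $x_v\in Z(K_v)$ one has
\[I_{K_v}(x_v^*E)=\theta(I)(x_v)\in H^3(K_v,\QZ(2)),\]
where $\theta(I)(x_v)$ denotes the value of the unramified class $\theta(I)=I_{K(Z)}(\xi^*E)$ under the per-place pairing (\ref{pairing at one place}). Granting this, note that the identification $H^3(K_v,\QZ(2))=\QZ$ built into (\ref{pairing at one place}) is, by the preliminaries, the composite $\Cor_{\kappa(v)/k}\circ\partial_v$, which is exactly the map appearing in (\ref{inv pairing formula}); hence the two pairings agree term by term. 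In particular each summand of (\ref{inv pairing formula}) equals the corresponding summand of the already well-defined pairing (\ref{pairing adelic space H3nr}), so only finitely many are nonzero, (\ref{inv pairing formula}) is well-defined, and the asserted diagram commutes once we pass to $\Inv^3(H,\QZ(2))_\rmnorm$ and $\overline H^3_\nr(Z,\QZ(2))$ through $\theta$ as in (\ref{theta mod constant}).

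To prove the per-place identity I would argue by functoriality. Write $P\in Z$ for the image of $x_v\colon\Spec K_v\to Z$; the induced local homomorphism $\mathcal O_{Z,P}\to K_v$ has kernel $\mathfrak m_P$, so it factors as $\mathcal O_{Z,P}\twoheadrightarrow\kappa(P)\hookrightarrow K_v$, and $x_v$ is the composite $\Spec K_v\to\Spec\kappa(P)\xrightarrow{\bar x}Z$. Naturality of the invariant $I$ under the field extension $\kappa(P)\hookrightarrow K_v$ gives $I_{K_v}(x_v^*E)=\Res_{K_v/\kappa(P)}\big(I_{\kappa(P)}(E_P)\big)$, where $E_P=\bar x^*E$ is the fiber of $E\to Z$ over $P$, an $H$-torsor over $\kappa(P)$. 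On the other side, by the construction of (\ref{pairing at one place}) together with the factorization above, $\theta(I)(x_v)=\Res_{K_v/\kappa(P)}(\bar\alpha_P)$, where $\bar\alpha_P\in H^3(\kappa(P),\QZ(2))$ is the restriction to the closed point of the (unique) lift of $\theta(I)$ to $H^3(\mathcal O_{Z,P},\QZ(2))$ provided by \ref{def}\ref{3}. Thus everything comes down to the identity $\bar\alpha_P=I_{\kappa(P)}(E_P)$ in $H^3(\kappa(P),\QZ(2))$.

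This last identity is the specialization compatibility underlying Rost's isomorphism $\theta$ of (\ref{theta}). By definition $\theta(I)$ is the value of $I$ at the generic torsor $\xi^*E$; the input I would invoke from \cite{garibaldi2003cohomological} (Part 1) is that, because $\theta(I)$ is unramified, its residues at the codimension $1$ specializations through $P$ vanish, so its reduction at $P$ is unambiguous and is computed by specializing the torsor $E$ along $\mathcal O_{Z,P}$. Since the generic fiber of this $\mathcal O_{Z,P}$-torsor is $\xi^*E$ and its closed fiber is $E_P$, compatibility of $I$ with specialization yields precisely $\bar\alpha_P=I_{\kappa(P)}(E_P)$. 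Equivalently, this is the assertion that the invariant recovered from the unramified class $\theta(I)$ by evaluation on torsors is $I$ itself, which is exactly the way $\theta$ is shown to be an isomorphism.

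I expect the main obstacle to be this specialization identity $\bar\alpha_P=I_{\kappa(P)}(E_P)$: one must match the purely cohomological recipe defining $\theta(I)(x_v)$ (lift to the local ring, then restrict to the residue field) with the torsor-theoretic value $I_{\kappa(P)}(E_P)$, and this requires the compatibility of degree $3$ invariants with residue maps from \cite{garibaldi2003cohomological} together with the unramifiedness of $\theta(I)$. The remaining ingredients — the factorization of $x_v$ through $\kappa(P)$, naturality of $I$, and the reduction of both well-definedness and the diagram to the per-place identity — are formal, and the vanishing of the contribution of constant classes needed to descend modulo $\Im(H^3(K,\QZ(2)))$ is immediate from the reciprocity complex (\ref{Weil}).
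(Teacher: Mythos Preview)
Your proposal is correct and follows essentially the same route as the paper. Both arguments reduce to the per-place identity $I_{K_v}(x_v^*E)=\theta(I)(x_v)$ and recognize that this is nothing but the statement that the inverse of $\theta$ is given by evaluation on torsors; the paper simply cites this description of $\theta^{-1}$ directly (Appendix A.2 of \cite{merkurjev2002unramified}) via the pullback $x^*\colon H^0_\Zar(Z,\mathcal H^3)\to H^3(F,\QZ(2))$, whereas you unwind the same fact through the factorization $\Spec K_v\to\Spec\kappa(P)\to Z$ and the specialization compatibility of invariants from \cite{garibaldi2003cohomological}.
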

\begin{proof}
Let $b\in H^3_\nr(Z,\QZ(2))$ and $x\in Z(F)$ be a point over any field extension $F/K$. We use the description of the isomorphism $\theta$ in Appendix A.2, p.461 of \cite{merkurjev2002unramified}. We define the class $b(x)\in H^3(F,\QZ(2))$ as the image of $b$ under the pull-back morphism 
\[x^*:H_\Zar^0(Z,\mathcal H^3(\QZ(2)))\rightarrow H_\Zar^0(\Spec F, \mathcal H^3(\QZ(2)))=H^3(F,\QZ(2))\] with respect to $x:\Spec F\rightarrow Z$. Thus we get a map
\[\hat I_K: Z(K)\rightarrow H^3(L,\QZ(2)),\quad x\mapsto b(x)\]
which is exactly how we defined our pairing (\ref{pairing at one place}). This $\hat I_K$ defines an invariant $I\in\Inv^3(H,\QZ(2))$ with $\theta(I)=b$ since the map $b$ is constant on the orbits of the $E(K)$-action on $Z(E)$. Therefore, we have a commutative diagram \[\begin{tikzcd}
Z(K_v) \arrow[d,Rightarrow, no head] \arrow[r,phantom,"\times" description] & {\Inv^3(H,\QZ(2))} \arrow[r]        & \QZ \arrow[d,Rightarrow, no head] \\
Z(K_v)                              \arrow[r,phantom,"\times" description] & {H^3_\nr(Z,\QZ(2))} \arrow[r] \arrow[u, leftarrow,"\simeq","\theta"' ] & \QZ.                            
\end{tikzcd}\]
Now we sum up these maps and quotient by $H^3(K,\QZ(2))$ and we get the desired result.
\end{proof} 
As a result, there is an reciprocity approximation to strong approximation which accounts for our constructions in Section \ref{failure}. Now we study more precise questions of how this reciprocity obstruction controls strong approximation, for example, is this obstruction the only one? For this end, we first need to take a closer took at our $p$-adic function field $K$.
\begin{prop}\label{exponent relation brk br K}
Let $X$ be a smooth proper geometrically integral curve over a $p$-adic field $k$, with index $I(X)$ and function field $K.$ Let $[A_0]\in \br k$ of exponent $m_0$ which maps to $[A]\in\br K$ of exponent $m$. Then $$m=\frac{m_0}{\gcd(I(X),m_0)},$$
and we can always choose $[A_1]\in\br k$ of exponent $mI(x)$ which maps to $[A]\in\br K$.
\end{prop}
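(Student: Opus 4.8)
The plan is to reduce the whole statement to a single computation of the kernel of the restriction map $\br k\to\br K$, after which both assertions become elementary arithmetic in $\br k\cong\QZ$. First I would observe that a constant Brauer class is unramified at every codimension $1$ point, so the map $\br k\to\br K$ factors through $\br X=\br_\nr(K/k)$; since $X$ is regular and integral, $\br X\hookrightarrow\br K$. Consequently
\[\ker(\br k\to\br K)=\ker(\br k\to\br X).\]

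Next I would compute this kernel using the Lichtenbaum duality (\ref{dual}) together with the explicit formula of Remark \ref{rmk}. Because $\psi^\ast\colon\br X\xrightarrow{\sim}(\Pic X)^D$ is an isomorphism, the pairing $\psi$ is left-nondegenerate, so the image $[A]$ of $[A_0]$ in $\br X$ vanishes if and only if $\psi([A],z)=0$ for every $z\in\Pic X$. By Remark \ref{rmk} we have $\psi([A],z)=(\deg z)[A_0]$ (viewing $[A_0]\in\br k\cong\QZ$), and the image of the degree map $\deg\colon\Pic X\to\Z$ is exactly $I(X)\Z$. Hence $[A]=0$ if and only if $I(X)[A_0]=0$, that is,
\[\ker(\br k\to\br K)=\prescript{}{I(X)}{\br k}.\]
This identification is the only genuine input, and I expect it to be the main point of the argument: it is precisely where the left-nondegeneracy of $\psi$ and the formula of Remark \ref{rmk} are used.

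Granting this, the exponent $m$ of $[A]$ is the order of the image of $[A_0]$ in $\br k/\prescript{}{I(X)}{\br k}$, and a direct computation in $\QZ$ shows that an element of order $m_0$ has order $m_0/\gcd(m_0,I(X))$ modulo the $I(X)$-torsion subgroup, giving $m=m_0/\gcd(I(X),m_0)$. For the last assertion I would note that the fibre of $\br k\to\br K$ over $[A]$ is the coset $[A_0]+\prescript{}{I(X)}{\br k}$ and that $mI(X)=\lcm(m_0,I(X))$; it then remains to exhibit an element of order exactly $\lcm(m_0,I(X))$ in this coset. This is a routine application of the Chinese Remainder Theorem: writing a representative as a fraction with denominator $m_0I(X)$, one adjusts the numerator modulo the primes dividing $I(X)$ but not $m$ so that it becomes coprime to $I(X)$, producing the desired $[A_1]$ of exponent $mI(X)$. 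Everything after the kernel computation is elementary and I would not spell out the valuation bookkeeping.
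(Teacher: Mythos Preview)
Your proof is correct and rests on the same essential input as the paper's---left-nondegeneracy of the Lichtenbaum pairing $\psi$ together with Remark~\ref{rmk}---but the organization is inverted. The paper first computes the local exponent $m_v$ of $A_{K_v}$ at each closed point $v$ (via the restriction $\br k\to\br\kappa(v)$, which is multiplication by $\deg v$), observes $\lcm_v(m_v)\mid m$, and then invokes duality to obtain $m\mid\lcm_v(m_v)$; the kernel description $\ker(\br k\to\br K)=\prescript{}{I(X)}{\br k}$ then falls out as the special case $m=1$. You compute the kernel first and read off $m$ as the order of $[A_0]$ in the quotient $\br k/\prescript{}{I(X)}{\br k}$. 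Your route is a bit cleaner for the proposition itself, while the paper's route produces the individual $m_v=m_0/\gcd(\deg v,m_0)$ as a byproduct, and these are used explicitly later in Theorem~\ref{thm rec only one sc}. For the second assertion, the paper adds to $[A_0]$ an element $\alpha$ of exact order $I(X)$ in the kernel and asserts the sum has order $\lcm(m_0,I(X))$; your CRT adjustment on the coset is in fact the more robust formulation, since in $\QZ$ a sum of elements of orders $m_0$ and $I(X)$ need not have order $\lcm(m_0,I(X))$ for an arbitrary choice of summands (e.g.\ $1/3+2/3$ when $m_0=I(X)=3$), so a little care in choosing the coset representative is genuinely required.
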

\begin{proof}
Since $[A_{K_v}]\in \br K_v$ is unramified, it comes from its evaluation $v^*[A_{K_v}]\in \br \kappa(v)$ at the place $v$, which is also the image of $[A_0]$ under the restriction map $\br k\rightarrow
\br(\kappa(v))$, given by multiplication by $[\kappa(v):k]=\deg v$ if we identify both $\br k$ and $\br(\kappa(v))$ with $\QZ$. Therefore, the exponent $m_v$ of $v^*[A_{K_v}]$ equals $\frac{m_0}{\gcd(\deg v,m_0)}$, which is also the exponent of $A_{K_v}$ because of the injection $\br\kappa(v)\hookrightarrow \br K_v$. Each exponent $m_v$ of $A_{K_v}$ divides the exponent $m$ of $A$, and thus their least common multiple $\lcm(m_v)_{v\in X^{(1)}}$ divides $m$. On the other hand $\lcm(m_v)_{v\in X^{(1)}}$ annihilates all the $v^*[A_{K_v}]$, and thus $\lcm(m_v)_{v\in X^{(1)}}[A]\in \br X$ vanishes at all the closed points $v\in X^{(1)}$. By Lichtenbaum's duality (\ref{dual}), this implies that $\lcm(m_v)_{v\in X^{(1)}}[A]=0\in\br X$ and hence $m|\lcm(m_v)_{v\in X^{(1)}}$. Therefore, we have $m=\lcm(m_v)_{v\in X^{(1)}}.$ But
\[\lcm(m_v)_{v\in X^{(1)}}=\lcm(\frac{m_0}{\gcd(\deg v,m_0)})_{v\in X^{(1)}}=\frac{m_0}{\gcd(I(X),m_0)},\] giving the first statement.
\par In particular, taking $m=1$ shows that $\ker(\br k\rightarrow\br K)$ is the cyclic subgroup of order $I(X)$ in $\br k\simeq \QZ$. Choose $\alpha\in\Ker(\br k\rightarrow \br K)$ of order exactly $I(X)$. Then $[A_1]=[A_0]+\alpha\in \br k $ has exponent $$\lcm(m_0,I(X))=\frac{m_0 I(X)}{\gcd(I(X),m_0)}=mI(X)$$
and $[A_1]$ maps to $[A]\in\br K$ as desired.
\end{proof}
\begin{rmk}
In fact, the particular case $m=1$ showing that  $\ker(\br k\rightarrow\br K)$ has order $I(X)$ gives exactly Theorem 1 of \cite{roquette1966splitting} and Theorem 3 of \cite{lichtenbaum1969duality}, where in the latter paper Lichtenbaum reduced the calculations to the period of $X$ and used the duality (\ref{pic}).
\end{rmk}
\begin{rmk}\label{local-global for brauer}
In the proof of Proposition \ref{exponent relation brk br K}, we see that Lichtenbaum's duality (\ref{dual}) can actually give a local-global principle for $\br K$, i.e. the natural map
\[\br K\rightarrow\prod_{v\in X^{(1)}}\br K_v\]
is injective. If $\alpha\in\br K$ vanishes in $\br K_v$, then $\alpha$ is unramified because the residue map factors through the completion. Hence $\alpha$ lies in $\br X$ with evaluation $v^*(\alpha)=0\in\br\kappa(v)$ for all places $v$, and the duality (\ref{dual}) shows that $\alpha=0\in\br K$. See also Corollary 10.5.5 and Remark 10.5.7(2) of \cite{colliot2021brauer}.
\end{rmk}
\begin{thm}\label{thm rec only one sc}
Let $Z=E/\SL_1(A)$ as in Construction \ref{cons}, with $A$ of exponent $m$. For the $p$-adic curve $X$, suppose $\Pic^0(X)/m=0$, or equivalently $\prescript{}{m} H^1(k,\Pic\overline X)=0$ by the Lichtenbaum duality (\ref{pic}) (for example $X=\P^1_k$ the projective line satisfies this condition). Then there is an exact sequence of pointed sets
$$1\rightarrow \overline{Z(K)}^S\rightarrow Z(\BA_K^S)\rightarrow \overline H^3_{\nr,S}(Z,\QZ(2))^D
\rightarrow 1$$ for $S\subseteq X^{(1)}$ a non-empty finite set of places. In particular,
 the reciprocity obstruction to strong approximation away from $S$ is the only one for $Z$. 
 \par The group $\overline H^3_{\nr,S}(Z,\QZ(2))$ measures the defect of strong approximation away from $S$ for $Z$, and is finite cyclic of order $\gcd(\frac{I(S)}{I(X)},m)$ where $I(X)$ (resp. $I(S)$) is the index of $X$ (resp. $S$). In particular, strong approximation away from $S$ holds for $Z$ if and only of $I(S)/I(X)$ is coprime to $m$, and such an $S$ always exists, for example $S$ such that $I(S)=I(X)$.
\end{thm}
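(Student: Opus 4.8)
The plan is to reduce the adelic statement to a local--global statement about $H^1(K,\SL_1(A))$, and then to settle that statement through a computation in $\Pic X$ controlled by the Lichtenbaum duality (\ref{dual}). Throughout write $H=\SL_1(A)$, let $\phi\colon H^1(K,H)\to\BP^1_S(H)=\bigoplus_{v\notin S}H^1(K_v,H)$ be the localization map, and note that $\BP^1_S(H)$ is discrete, each $H^1(K_v,H)$ being finite by Proposition \ref{cyclic} and $H^1(\mathcal O_v,\mathcal H)=0$. First I would use the commutative diagram relating $E$, $Z$ and the $H^1$'s together with strong approximation for the split group $E$ (Corollary \ref{split groups satisfy SA}) to prove that $\overline{Z(K)}^S$ is exactly the preimage of $\im\phi$ under $Z(\BA_K^S)\to\BP^1_S(H)$: the inclusion $\subseteq$ holds because the localization is continuous into a discrete group (so $\im\phi$ is closed), and $\supseteq$ holds because any adelic point lying over $\phi(\xi)$ lies in the $E(\BA_K^S)$-orbit of a lift $y\in Z(K)$ of $\xi$, while $E(K)\cdot y\subseteq Z(K)$ is dense in that orbit. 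Since the reciprocity pairing on $Z(\BA_K^S)$ factors through $\BP^1_S(H)$ by Proposition \ref{prop inv rec compatible} (it is computed from the fibres $x_v^*E$), the theorem becomes equivalent to the exactness of
\[1\to\im\phi\to\BP^1_S(H)\to\overline H^3_{\nr,S}(Z,\QZ(2))^D\to 1,\]
with the middle map induced by the invariant pairing.

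Next I would make both outer terms explicit. By (\ref{theta mod constant}) and Rost's theorem, $\overline H^3_{\nr,S}(Z,\QZ(2))$ is the kernel of $\Inv^3(H,\QZ(2))_\rmnorm\to\prod_{v\in S}\Inv^3(H_{K_v},\QZ(2))_\rmnorm$, which under $R_H\mapsto R_{H_{K_v}}$ is the kernel of $\Z/m\to\prod_{v\in S}\Z/m_v$; using $m_v=m/\gcd(m,\deg v/I(X))$ (Proposition \ref{exponent relation brk br K}) and the resulting $\lcm_{v\in S}m_v=m/\gcd(m,I(S)/I(X))$, this kernel is cyclic of order $m':=\gcd(m,I(S)/I(X))$. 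On the other side, Proposition \ref{cyclic} identifies $\BP^1_S(H)\cong\bigoplus_{v\notin S}\Z/m_v\Z$ by the valuation, under which $\im\phi$ is the image of the group of principal divisors; and Proposition \ref{comp} shows the pairing with the generator $\lcm_{v\in S}m_v\cdot R_H$ of $\overline H^3_{\nr,S}$ sends $(n_v)_v$ to $\lcm_{v\in S}m_v\cdot\psi([A],\sum_{v\notin S}n_v\cdot v)$. Here the hypothesis enters decisively: since $m[A]=0$ and $\Pic^0(X)/m=0$, the duality (\ref{dual}) forces $\psi([A],-)$ to vanish on $\Pic^0 X$ and to factor as an isomorphism $\Pic X/m\cong\Z/m\xrightarrow{\sim}\tfrac1m\Z/\Z$ through the degree, so that $\psi([A],z)$ depends only on $\deg z$.

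The crux is then to show that the orthogonal complement of $\overline H^3_{\nr,S}$ in $\BP^1_S(H)$ equals $\im\phi$ and that the pairing is onto the dual. Both follow from a single computation of the cokernel of $\im\phi\hookrightarrow\bigoplus_{v\notin S}\Z/m_v\Z$, which is $\Pic X/M$ with $M=\langle\, m_v[v]\ (v\notin S),\ [v]\ (v\in S)\,\rangle$. Because each $m_v\mid m$, the subgroup $M$ contains $m[v]$ for every closed point $v$, hence $M\supseteq m\Pic X\supseteq m\Pic^0 X=\Pic^0 X$, the last equality being exactly the hypothesis $\Pic^0(X)/m=0$. Consequently the degree induces an isomorphism $\Pic X/M\xrightarrow{\sim}\Z/m'$ (the computation $\deg(M)=m'I(X)\cdot\Z$ using, besides the congruences $m'\mid\deg v/I(X)$ for $v\in S$, that $\gcd_{v\notin S}\deg v=I(X)$, which holds by the moving lemma over the infinite field $k$), and one checks this isomorphism is identified with the induced pairing. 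This simultaneously yields $\ker(\text{pairing})=\im\phi$ (reciprocity is the only obstruction) and surjectivity onto $\overline H^3_{\nr,S}(Z,\QZ(2))^D\cong\Z/m'$. I expect this third step, namely converting ``reciprocity is the only obstruction'' into the containment $\Pic^0 X\subseteq M$ and verifying the compatibility of the degree isomorphism with the reciprocity pairing, to be the main obstacle.

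Finally the remaining assertions follow formally: $\overline H^3_{\nr,S}(Z,\QZ(2))$ is cyclic of order $m'=\gcd(m,I(S)/I(X))$, so it measures the defect of strong approximation and vanishes precisely when $I(S)/I(X)$ is coprime to $m$; and choosing a finite non-empty $S$ with $\gcd_{v\in S}\deg v=I(X)$ (possible since finitely many closed points already realize the index $I(X)$) gives $I(S)=I(X)$ and hence an $S$ for which strong approximation away from $S$ holds.
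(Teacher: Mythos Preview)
Your proposal is correct, and the overall strategy matches the paper's: reduce via strong approximation for $E$ to the exactness of $1\to\im\phi\to\BP^1_S(H)\to\overline H^3_{\nr,S}(Z,\QZ(2))^D\to1$, identify the right-hand term with $(\ker(\Z/m\to\prod_{v\in S}\Z/m_v))^D$ through Rost's isomorphism and (\ref{theta mod constant}), and verify exactness using Proposition~\ref{comp} and the Lichtenbaum pairing. The packaging differs somewhat. The paper first uses the hypothesis in its equivalent form $\prescript{}{m}H^1(k,\Pic\overline X)=0$ together with the Hochschild--Serre sequence to show that $[A]$ lifts to $\br k$, picks a lift $[A_0]$ of exponent $mI(X)$, and then carries out explicit congruences in $\Z/mI(X)$, treating first $S=\emptyset$ and afterwards reducing general $S$ to this by completing with components at $v\in S$. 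You instead compute the cokernel of $\phi$ directly as $\Pic X/M$ and use the hypothesis in the form $\Pic^0(X)/m=0$ to obtain $\Pic^0 X\subseteq M$, handling all $S$ simultaneously via the degree map and the moving lemma. Your route avoids the auxiliary lift $[A_0]$ and the two-step reduction; the paper's route makes the role of $\br k$ more visible.

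One small point to fix: your appeal to Proposition~\ref{exponent relation brk br K} for the formula $m_v=m/\gcd(m,\deg v/I(X))$ is premature, since that proposition presupposes that $[A]$ comes from $\br k$, which you have not yet established. Either insert the Hochschild--Serre argument the paper uses, or simply postpone this computation and derive it from your own later step: once you know that $\psi([A],-)$ induces an isomorphism $\Pic X/m\xrightarrow{\sim}\tfrac1m\Z/\Z$ via $\deg/I(X)$, then since $m_v$ equals the order of $v^*[A]\in\br\kappa(v)$, hence of $\Cor_{\kappa(v)/k}(v^*[A])=\psi([A],v)$, the formula $m_v=m/\gcd(m,\deg v/I(X))$ follows at once.
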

\begin{proof}We claim that there is a commutative diagram as follows, and the right column is an exact sequence:
\[
\begin{tikzcd}
E(K)\arrow[r]\arrow[d]&Z(K) \arrow[d] \arrow[r] & {H^1(K,H)} \arrow[d] \arrow[r]                 & 1 \\ E(\BA_K^S)      \arrow[r]&Z(\BA_K^S) \arrow[r]\arrow[d]        & {\bigoplus_{v\in X^{(1)}\backslash S}H^1(K_v,H)}\simeq \bigoplus_{v\in X^{(1)}\backslash S}\BBZ/m_v\BBZ \arrow[r]\arrow[d, "(n_v)_v\mapsto \Sigma n_v\cdot \deg v /I(X)", "f_H"'] & 1\\
&\overline H^3_{\nr,S}(Z,\QZ(2))^D\arrow[r,"h\mapsto h(\frac{m}{\gcd(\frac{I(S)}{I(X)},m)}R_H)", "\simeq"']&\BBZ/\gcd(\frac{I(S)}{I(X)},m)\BBZ\arrow[d]\\
&&1&
\end{tikzcd}\]
where $m_v=mI(X)/\gcd(\deg v,mI(X))$, and we identify ${\bigoplus_{v\in X^{(1)}\backslash S}H^1(K_v,H)}$ again with $\BP_S^1(H)$ as explained in Theorem \ref{thm failure of sa}.
\par
First we prove that it holds when we take $S=\emptyset$ and the corresponding $\gcd(\frac{I(S)}{I(X)},m)=m.$
\par\noindent\textbf{Definition and exactness of the right column.} The Hochschild-Serre spectral sequence $$H^p(k,H^q(\overline X,\gm))\implies H^{p+q}(X,\gm)$$ along with $\br(\overline X)=0$ gives the exact sequence 
\[0\rightarrow \Pic X\rightarrow H^0(k,\Pic \overline X)\rightarrow\br k\rightarrow \br X\rightarrow H^1(k,\Pic \overline X).\]
Since $\prescript{}{m}{H^1(k,\Pic\overline X)=0}$, the class $[A]\in\prescript{}{m}{\br X}$ comes from $\br k$. By Proposition \ref{exponent relation brk br K}, we can choose $[A_0]\in\br k$ of exponent $mI(X)$ which maps to $[A]\in\br K$. The exponent of $A_{K_v}$ equals $m_v=mI(X)/\gcd(\deg v,mI(X))$ as explained in the proof of Proposition \ref{exponent relation brk br K}. \par
Let $([a_v])_v\in \bigoplus_{v\in X^{(1)}}H^1(K_v,H)$ represented by $a_v\in K_v^\times$, and let $z_0=\sum_{v\in X^{(1)}}n_v\cdot v\in\Pic X$ where $n_v$ is the valuation of $a_v$ at $v$. Then $(n_v)_v\in \bigoplus_{v\in X^{(1)}}\BBZ/m_v\BBZ$ corresponds to $([a_v])_v\in \bigoplus_{v\in X^{(1)}}H^1(K_v,H)$ under the isomorphism of Proposition \ref{cyclic}. By Proposition \ref{comp} and Remark \ref{rmk}, the image of $f_H$ (the map $\Sigma\circ\oplus R_H$ in the diagram (\ref{diagram})) is contained in the subgroup $\prescript{}{mI(X)}{\br k}=\BBZ/mI(X)\BBZ$ generated by $[A_0]$, and
under this identification $[A_0]=1\in \BBZ/mI(X)\BBZ$, we have \[f_H(([a_v])_v)=\deg z_0\bmod mI(X)=\sum_{v\in X^{(1)}}(n_v\bmod m_v)\cdot\deg v\mod mI(X).\addtag\label{sum of degrees}\] Note that the sum on the right is indeed well-defined because $mI(X)|(m_v\cdot\deg v)$. The image of the degree map in $\BBZ/mI(X)\BBZ$ is the cyclic subgroup of order $m$ generated by $I(X)$. Hence dividing (\ref{sum of degrees}) by $I(X)$, we can identify the image of $f_H$ with $\BBZ/m\BBZ$, taking $I(X)[A_0]\in\br k$ to $1\in\BBZ/m\BBZ$ as indicated in the diagram. 
\par Now suppose $f_H(([a_v])_v)=0= \deg z_0/I(X)\bmod m,$ or equivalently $\deg z_0 [A_0]=0\in\prescript{}{mI(X)}{\br k}$. Therefore $m|(\deg z_0/I(X))$, and $z_0\in \Pic X/m$ has trivial image in $\BBZ/m\BBZ$ in the following exact sequence
\[\Pic^0(X)/m\rightarrow \Pic X/m\xrightarrow{\deg/I(X)} \BBZ/m\BBZ\rightarrow 0\addtag\label{surjective pic mod m}\]
 obtained by $\otimes \BBZ/m\BBZ$ the exact sequence
\[0\rightarrow \Pic^0(X)\rightarrow \Pic X\xrightarrow{\deg/I(X)} \BBZ\rightarrow 0. \]But $\Pic^0(X)/m=0$ by our assumption, we can thus find a global function $a\in K^\times/ K^{\times m}$ such that the valuation of $a$ at $v$ is congruent to $n_v$ modulo $m$, and a fortiori modulo $m_v$ because $m_v|m$. So the class $[a]\in H^1(K,H)$ has image $([a_v])_v$ under the map $H^1(K,H)\rightarrow \oplus_{v\in X^{(1)}}H^1(K_v,H)$. This proves the exactness at the second term.
\par
Given any $l\in \BBZ/m\BBZ$, there exists $\sum_{v\in X^{(1)}}l_v\cdot v\in \Pic X/m$ such that $$\deg (\sum_{v\in X^{(1)}}l_v\cdot v)/I(X)=l\in \BBZ/m\BBZ$$
by the exact sequence (\ref{surjective pic mod m}).
Then $(l_v)_v\in\bigoplus_{v\in X^{(1)}}\BBZ/m_v\BBZ$ has image $l=\BBZ/m\BBZ$ under the map $f_H$, proving the exactness at the third term.
\par
\noindent\textbf{Definition and commutativity of the bottom square.}
By Proposition \ref{comp}, we can identify $\overline H^3_{\nr}(Z,\QZ(2))^D$ with $(\Inv^3(H,\QZ(2))_\rmnorm)^D$ in the sequence. Since $\Inv^3(H,\QZ(2))_\rmnorm$ is a finite cyclic group of order $m$ generated by the Rost invariant $R_H$, we have an isomorphism $(\Inv^3(H,\QZ(2))_\rmnorm)^D\rightarrow \BBZ/m\BBZ$ by evaluating an element at $R_H$. Then the formula (\ref{inv pairing formula}), the definition of $f_H$, and the identification of $\BBZ/m\BBZ$ as the cyclic subgroup of $\BBZ/mI(X)\BBZ$ generated by $I(X)$ altogether give the commutativity.
\vspace{2mm}\par
Now consider $S\neq \emptyset$. We use the identification (\ref{theta mod constant}).
Under the field extension map, the Rost invariant $R_H$ maps to the Rost invariant $R_{H_{K_v}}$ of order $m_v$. Therefore, an element $n_0R_H\in\Inv^3(H,\QZ(2))_\rmnorm$ is in $\overline H^3_{\nr,S}(Z,\QZ(2))$ if and only of $m_v|n_0$ for all $v\in S$, or equivalently $n_0$ is a multiple of $$\lcm(m_v)_{v\in S}=\lcm(\frac{mI(X)}{\gcd(\deg v,mI(X))})_{v\in S}=\frac{mI(X)}{\gcd(I(S),mI(X))}=\frac{m}{\gcd(\frac{I(S)}{I(X)},m)},$$
noting that $I(X)|I(S)$.
Then $$\frac{m}{\gcd(\frac{I(S)}{I(X)},m)}R_H\in \overline H^3_{\nr,S}(Z,\QZ(2))$$ is a generator of this cyclic group of order ${\gcd(\frac{I(S)}{I(X)},m)}$, and evaluation at this generator gives an isomorphism $\overline H^3_{\nr,S}(Z,\QZ(2))^D\rightarrow \BBZ/\gcd(\frac{I(S)}{I(X)},m)\BBZ$.

\par
 Any $(n_v)_v\in\bigoplus_{v\in X^{(1)}\backslash S}\BBZ/m_v\BBZ$ can be completed by elements from $\bigoplus_{v\in S}\BBZ/m_v\BBZ$, whose images under $f_H$ give exactly the subgroup generated by $I(S)/I(X)$ in $\Z/m\Z$. Therefore, the exactness of the right column and also the commutativity follow from the previous case $S=\emptyset$. 

Since the special rational group $E$ is split semisimple simply connected, it satisfies strong approximation away from $S$ (cf. Corollary \ref{split groups satisfy SA}). The rows in the diagram are exact sequences, then a diagram chasing gives the desired result.
\end{proof}

As another application of this reciprocity obstruction to strong approximation, we study classifying varieties of tori. 
\begin{thm}\label{thm rec only one for SLn/tori}Let $Z=E/T$ be a classifying variety of a torus $T$ over $K$ a $p$-adic function field, where the special rational group $E$ is split semisimple simply connected. Then there is an exact sequence of pointed sets
\[1\rightarrow \overline{Z(K)}^S\rightarrow Z(\BA_K^S)\rightarrow \overline H^3_{\nr,S}(Z,\QZ(2))^D\addtag\label{incomplet hoped sequence}\] for $S\subseteq X^{(1)}$ a non-empty finite set of places. In other words,
the reciprocity obstruction to strong approximation away from $S$ is the only one for $Z$. If we suppose furthermore that $\CHOW^2(Z)\rightarrow H^0(K,\CHOW^2(\overline Z))$ is surjective, then there is an exact sequence of pointed sets
\[1\rightarrow \overline{Z(K)}^S\rightarrow Z(\BA_K^S)\rightarrow \overline H^3_{\nr,S}(Z,\QZ(2))^D\rightarrow \overline H^3_{\nr,X^{(1)}}(Z,\QZ(2))^D\rightarrow 1.
\addtag\label{complete exact sequence hoped}\]
\end{thm}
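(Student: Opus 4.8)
The plan is to run the same diagram chase as in Theorem~\ref{thm rec only one sc}, but to replace the explicit cyclic computation available for $\SL_1(A)$ with the Poitou--Tate machinery of \cite{harari2015weak} and the Blinstein--Merkurjev description of $\Inv^3(T,\QZ(2))_\rmnorm$. First I would start from the standard classifying-variety diagram with exact rows
\[
\begin{tikzcd}
E(K)\arrow[r]\arrow[d]&Z(K)\arrow[r]\arrow[d]&H^1(K,T)\arrow[r]\arrow[d]&1\\
E(\BA_K^S)\arrow[r]&Z(\BA_K^S)\arrow[r]&\BP^1_S(T)\arrow[r]&1
\end{tikzcd}
\]
where the local groups $H^1(K_v,T)$ are finite and $\BP^1_S(T)$ carries the restricted-product topology. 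Since $E$ is split semisimple simply connected it satisfies strong approximation away from $S$ (Corollary~\ref{split groups satisfy SA}), so $E(\BA_K^S)$ lies in the closure of $E(K)$; a diagram chase then reduces the entire statement to understanding the image, and the cokernel, of the localization map $H^1(K,T)\to\BP^1_S(T)$, together with its pairing against the obstruction group.

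Next I would identify the obstruction group with dual-torus cohomology. Via the isomorphism $\theta$ of (\ref{theta}) and its ``trivial on $S$'' form (\ref{theta mod constant}), the group $\overline H^3_{\nr,S}(Z,\QZ(2))$ is identified with $\ker\big(\Inv^3(T,\QZ(2))_\rmnorm\to\prod_{v\in S}\Inv^3(T_{K_v},\QZ(2))_\rmnorm\big)$, and by Proposition~\ref{prop inv rec compatible} the reciprocity pairing factors through the evaluation pairing $\BP^1_S(T)\times\overline H^3_{\nr,S}(Z,\QZ(2))\to\QZ$ on the local torsors $x_v^*E$. The essential input is the Blinstein--Merkurjev computation in \cite{blinstein2013cohomological}, which expresses $\Inv^3(T,\QZ(2))_\rmnorm$ through the cohomology of the dual torus $T^\prime$ of \cite{harari2015weak} together with a piece governed by $H^0(K,\CHOW^2(\overline Z))$. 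Matching this with the evaluation pairing, the obstruction pairing becomes the local Poitou--Tate pairing between $\BP^1_S(T)$ and $\Sha$-type groups of $T^\prime$, so that ``trivial on $S$'' classes correspond to the kernel defining the relevant Tate--Shafarevich group of $T^\prime$.

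With these identifications in place, the unconditional three-term sequence follows from the exactness of the appropriate segment of the Poitou--Tate sequence of \cite{harari2015weak}: the image of $H^1(K,T)$ in $\BP^1_S(T)$ is exactly the orthogonal complement of the ``trivial on $S$'' dual classes, which after the diagram chase says that $\overline{Z(K)}^S$ is precisely the set of adelic points orthogonal to $\overline H^3_{\nr,S}(Z,\QZ(2))$. For the four-term refinement I would invoke the surjectivity hypothesis $\CHOW^2(Z)\to H^0(K,\CHOW^2(\overline Z))$: through the Bloch--Ogus sequence $H^3(Z,\mu_m^{\otimes2})\to H^3_\nr(Z,\mu_m^{\otimes2})\to\CHOW^2(Z)/m\to H^4(Z,\mu_m^{\otimes2})$ recalled earlier, this controls exactly the $\CHOW^2$-part entering the Blinstein--Merkurjev description, so that the Poitou--Tate sequence closes up and the cokernel of $Z(\BA_K^S)\to\overline H^3_{\nr,S}(Z,\QZ(2))^D$ is identified with $\overline H^3_{\nr,X^{(1)}}(Z,\QZ(2))^D$.

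The hard part will be the bookkeeping of dualities: one must verify that the Blinstein--Merkurjev identification of $\Inv^3(T,\QZ(2))_\rmnorm$ with cohomology of $T^\prime$ is compatible, Tate twist by Tate twist and place by place, with both the evaluation pairing of Proposition~\ref{prop inv rec compatible} and the Poitou--Tate pairing of \cite{harari2015weak}, so that the two notions of ``trivial on $S$'' really coincide. A secondary difficulty is topological: one must ensure that orthogonality for the continuous pairing cuts out exactly the closure $\overline{Z(K)}^S$ and not a strictly larger subset, which requires the local finiteness of $H^1(K_v,T)$ and the restricted-product structure to interact correctly with the density provided by strong approximation for $E$.
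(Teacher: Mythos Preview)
Your proposal is correct and follows essentially the same route as the paper: the classifying-variety diagram, strong approximation for $E$, the identification (\ref{theta mod constant}), Blinstein--Merkurjev's map $\alpha:H^1(K,T')\to\Inv^3(T,\QZ(2))_\rmnorm$, the Poitou--Tate sequence of \cite{harari2015weak} dualized to produce the exact right column, and the compatibility check via Proposition~\ref{prop inv rec compatible} are exactly what the paper does.

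One small correction on the four-term refinement: the $\CHOW^2$ hypothesis does not enter through the Bloch--Ogus sequence $H^3(Z,\mu_m^{\otimes2})\to H^3_\nr(Z,\mu_m^{\otimes2})\to\CHOW^2(Z)/m\to H^4(Z,\mu_m^{\otimes2})$ that you cite, but directly through the Blinstein--Merkurjev exact sequence
\[
1\to\CHOW^2(Z)_\tors\to H^1(K,T')\xrightarrow{\alpha}\Inv^3(T,\QZ(2))_\rmnorm\to H^0(K,\CHOW^2(\overline Z))/\Im(\CHOW^2(Z))\to H^2(K,T').
\]
The surjectivity of $\CHOW^2(Z)\to H^0(K,\CHOW^2(\overline Z))$ kills the displayed quotient, making $\alpha$ surjective; this forces the induced map $\Sha^1_{X^{(1)}\backslash S}(K,T')\to\overline H^3_{\nr,S}(Z,\QZ(2))$ to be surjective as well, so after dualizing the horizontal maps at the bottom of the diagram become injective and the four-term sequence closes up. The object $\CHOW^2(Z)/m$ from Bloch--Ogus plays no role here.
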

\begin{proof}
We claim that there is a commutative diagram as follows, and the right column is an exact sequence:
\[
\begin{tikzcd}
E(K)\arrow[r]\arrow[d]&Z(K) \arrow[d] \arrow[r] & {H^1(K,T)} \arrow[d] \arrow[r]                 & 1 \\ E(\BA_K^S)      \arrow[r]&Z(\BA_K^S) \arrow[r]\arrow[d]        & \BP_S^1(T) \arrow[r]\arrow[d] & 1\\
&\overline H^3_{\nr,S}(Z,\QZ(2))^D\arrow[r]\arrow [d]&\Sha^1_{X^{(1)}\backslash S}(K,T^\prime)^D\arrow[d]\\
&\overline H^3_{\nr,X^{(1)}}(Z,\QZ(2))^D\arrow[r]\arrow[d]&\Sha^1(K,T^\prime)^D\arrow[d]\\
&1&1
\end{tikzcd}\addtag\label{diagram torus}\]
where $T^\prime$ denotes the dual torus of $T$ (i.e. the torus whose character group is the cocharacter group of $T$), and
\[\Sha^1_{X^{(1)}\backslash S}(K,T):=\Ker(H^1(K,T)\rightarrow\prod_{v\in S}H^1(K_v,T)),\]\[
\Sha^1(K,T):=\Ker(H^1(K,T)\rightarrow\prod_{v\in X^{(1)}}H^1(K_v,T)).\]
Blinstein and Merkurjev gave a description of $\Inv^3(T,\QZ(2))_\rmnorm$ by the exact sequence (cf. Theorem B and Lemma 4.2 of \cite{blinstein2013cohomological})
\[
\begin{split}1\rightarrow \CHOW^2(Z)_\tors \rightarrow H^1(K,T^\prime)\xrightarrow{\alpha} \Inv^3(T,\QZ(2))_\rmnorm\\
\rightarrow H^0(K,\CHOW^2(\overline Z))/\Im(\CHOW^2 (Z))\rightarrow H^2(K,T^\prime).\end{split}\addtag\label{inv of torus sequence}\]
The morphism $\alpha$ is constructed as follows: for every $a\in H^1(K,T^\prime),\ b\in H^1(F,T)$ and every field extension $F/K$, the invariant $\alpha(a)$ sends $b$ to $a_F\cup b$ under the cup-product pairing
\[H^1(F,T^\prime)\otimes H^1(F,T)\rightarrow H^4(F,\BBZ(2))\simeq H^3(F,\QZ(2))\addtag\label{torus pairing}\] which is defined using the quasi-isomorphisms
\[T\simeq\widehat{T^\prime}\otimes^{\BL}\BBZ(1)[1],\quad T^\prime\simeq \widehat T\otimes^\BL \BBZ(1)[1]\addtag\label{quasi isomorphisms}\] 
and pairings \[\widehat{T^\prime}\otimes\widehat T\rightarrow\BBZ,\quad \BBZ(1)[1]\otimes^\BL\BBZ(1)[1]\rightarrow\BBZ(2)[2].\addtag\label{pairings character of tori}\]
On the other hand, there is a Poitou-Tate type exact sequence of topological groups established by Harari, Scheiderer and Szamuely (cf. Theorem 0.1 of \cite{harari2015weak}):
\[\cdots\rightarrow H^1(K,T)\rightarrow \BP^1(T)\rightarrow H^1(K,T^\prime)^D\rightarrow \cdots\addtag\label{Poitou}\] More precisely, the image of the group $H^1(K,T)$ in $\BP^1(T)$ is the right kernel of the pairing 
\[H^1(K,T^\prime)\times \BP^1(T)\xrightarrow{\Sigma}\QZ\] induced by
\[H^1(K_v,T^\prime)\times H^1(K_v,T)\rightarrow H^3(K_v,\QZ(2))\addtag\label{local duality pairing}\] which are defined exactly by using (\ref{torus pairing}). Taking $T$ to be $T^\prime$ in (\ref{Poitou}), and noting that any elements of $\BP_S^1(T^\prime)$ can be completed by $0$'s into an element of $\BP^1(T^\prime)$, we get an exact sequence 
\[ \Sha^1_{X^{(1)}\backslash S}(K,T^\prime)\rightarrow \BP_S^1(T^\prime)\rightarrow H^1(K,T)^D\] which can be completed into the exact sequence 
\[1\rightarrow\Sha^1(K,T^\prime)\rightarrow \Sha^1_{X^{(1)}\backslash S}(K,T^\prime)\rightarrow \BP_S^1(T^\prime)\rightarrow H^1(K,T)^D\addtag\label{before dualizing}\] by definition.
The pairing (\ref{local duality pairing}) is actually a perfect duality, and the subgroups $H^1(\mathcal O_v,\mathcal T)\subseteq H^1(K_v,T)$ and $H^1(\mathcal O_v,\mathcal T^\prime)\subseteq H^1(K_v,T^\prime)$ are exact annihilators of each other (Proposition 1.1 and 1.3 of \cite{harari2015weak}). Therefore, by dualizing (\ref{before dualizing}), we get the exactness of the right column in (\ref{diagram torus}). \par
The commutative diagram
\[\begin{tikzcd}
H^1(K,T^\prime)\arrow[r,"\alpha"]\arrow[d]&\Inv^3(T,\QZ(2))\arrow[d]\\
\prod_{v\in S}H^1(K_v,T^\prime)\arrow[r, "\prod \alpha"]&\prod_{v\in S}\Inv^3(T_{K_v},\QZ(2))
\end{tikzcd}\]
induces a morphism \[
\begin{split}\Sha^1_{X^{(1)}\backslash S}(K,T^\prime)\rightarrow \ker(\Inv^3(T,\QZ(2))_\rmnorm \rightarrow \prod_{v\in S}\Inv^3(T_{K_v},\QZ(2))_\rmnorm)\\
\simeq \overline H^3_{\nr,S}(Z,\QZ(2)).\end{split}\addtag\label{sha induced map}\]
Now applying Proposition \ref{prop inv rec compatible} and the compatibility between the morphism $\alpha$ (\ref{torus pairing}) and the pairing (\ref{local duality pairing}), we get the commutativity of the two squares at the bottom of (\ref{diagram torus}).
\par
Since $E$ satisfies strong approximation away from $S$ (cf. Corollary \ref{split groups satisfy SA}), chasing the diagram (\ref{diagram torus}) gives the desired exactness of (\ref{incomplet hoped sequence}).
\par
Under the further assumption of the surjectivity of $\CHOW^2(Z)\rightarrow H^0(K,\CHOW^2(\overline Z))$, the morphism $\alpha$ is thus surjective by (\ref{inv of torus sequence}), and so is the induced morphism (\ref{sha induced map}). Hence the two bottom rows in (\ref{diagram torus}) are injective. Now a diagram chasing gives the exact sequence (\ref{complete exact sequence hoped}).  
\end{proof}
\begin{eg}
We consider the torus $T^\prime=R_{L/K}(\BBG_{m,L})/\gm$ in Example 4.14 of \cite{blinstein2013cohomological}, where $L/K$ is a degree $n$ field extension induced by a continuous surjective group morphism from the absolute Galois group of $K$ to the symmetric group $S_n$. For example, the generic maximal torus of the group $\PGL_n$ is of this form. The dual torus $T$ of $T^\prime$ is the norm one torus $R_{L/K}^{(1)}(\BBG_{m,L}).$ Let $Z=E/T$ be a classifying variety. Then $H^0(K,\CHOW^2(\overline Z))/\Im(\CHOW(Z))$ is trivial, and we have $\overline H^3_{\nr}(Z,\QZ(2))\simeq \br(L/K)$. Since local-global principle holds for the Brauer group of $K$  (cf. Remark \ref{local-global for brauer}), the group $\overline H^3_{\nr,X^{(1)}}(Z,\QZ(2))$ is trivial. We apply (\ref{complete exact sequence hoped}) and get the exact sequence of pointed sets
\[1\rightarrow \overline{Z(K)}^S\rightarrow Z(\BA_K^S)\rightarrow \ker(\br(L/K)\rightarrow\prod_{v\in S}\br(K_v\otimes L/K_v))^D\rightarrow 1. \]
\end{eg}
\begin{rmk}
The exact sequence in Theorem \ref{thm rec only one sc} actually also takes the form (\ref{complete exact sequence hoped}), because $$\overline H^3_{\nr, X^{(1)}}(Z,\QZ(2))\subseteq \overline H^3_{\nr,S^\prime}(Z,\QZ(2))=1$$ for some finite set $S^\prime$ such that $I(S^\prime)=I(X).$ 
Therefore, we may hope this exact sequence to hold in other more general situations.
\end{rmk}
In fact, the commutative diagram in the proof of Proposition \ref{prop inv rec compatible} showing local compatibility is generally true for $\Inv^d(H,\QZ(d-1))$ and $H^d_\nr(Z,\QZ(d-1))$ over any field of characteristic $0$ (while in characteristic $p$, we can identify $\Inv^d(H,\QZ(d-1))$ with the subgroup of ``balanced elements'' in $H^0_\Zar(Z,\mathcal H ^d(\QZ(d-1)))$ under certain conditions, cf. Theorem A of \cite{blinstein2013cohomological}). In particular, taking $d=2$, we get the group $\Inv(H,\br)$ of invariants with values in the Brauer group, and $$H^0_\Zar(Z,\mathcal H^2(\QZ(1)))=H^2(Z,\QZ(1))=\br Z.$$ This enables us to relate $\Inv(H,\br)$ with the Brauer-Manin pairing, which is used for studying the arithmetic of varieties over a number field $F$, for which we get the following commutative diagram
\[
\begin{tikzcd}
Z(\BA_K) \arrow[d,Rightarrow, no head] \arrow[r,phantom,"\times" description] & \Inv(H,\br) \arrow[r] \arrow[d, "\simeq"]       & \QZ \arrow[d,Rightarrow, no head] \\
Z(\BA_K)                                \arrow[r,phantom,"\times" description] & {\br Z} \arrow[r]  & \QZ.     
\end{tikzcd}\addtag\label{cd=2 diagram pairing compatibility}
\]
Now we explain how this is compatible with known results of strong approximation over $F$ a number field.\par 
For $H$ a connected linear group over a field $K$ of characteristic $0$, Blinstein and Merkurjev (cf. Theorem 2.4 of \cite{blinstein2013cohomological}) showed that 
\[\Pic H\oplus \br K\xrightarrow[~\simeq~]{\nu_1}\Inv(H,\br)\simeq\br Z\]
whose construction and proof are based on the exact sequence 
\[ \Pic W \rightarrow \Pic H\rightarrow\br Y\rightarrow \br W\addtag\label{Pic Pic Br Br exact sequence} \]
associated to a smooth $K$-variety $Y$ and a $Y$-torsor $W$ under $H$, as defined in Proposition 6.10 of \cite{sansuc1981groupe}. In fact, if we use the exact sequence in Theorem 2.8 of \cite{borovoi2013manin} which is of the same form as (\ref{Pic Pic Br Br exact sequence}) but with $\Pic H\rightarrow \br Y$ constructed in a different way via the abelian group $\ext_F^c(H,\gm)$ of isomorphism classes of central extensions of $K$-algebraic groups of $H$ by $\gm$, we can still get an isomorphism \[\Pic H\oplus \br K\xrightarrow[~\simeq~]{\nu_2}\Inv(H,\br)\simeq\br Z\addtag\label{Br Z = Inv = Pic + Br F}\] with the same proof as in Theorem A of \cite{blinstein2013cohomological}. 
\par
Now for $F$ a number field, the above diagram (\ref{cd=2 diagram pairing compatibility}) along with the identification (\ref{Br Z = Inv = Pic + Br F}) gives rise to the following commutative diagram, where the right column is an exact sequence of pointed sets (cf. Corollary 2.5 and Proposition 2.6 of \cite{kottwitz1986stable}):
 \[
\begin{tikzcd}
E(F)\arrow[r]\arrow[d]&Z(F) \arrow[d] \arrow[r] & {H^1(F,H)} \arrow[d] \arrow[r]                 & 1 \\ E(\BA_F)      \arrow[r]&Z(\BA_F) \arrow[r]\arrow[d]        & \BP^1(H) \arrow[r]\arrow[d] & 1\\
&(\br Z/\Im(\br F))^D\arrow[r, "\simeq"]&(\Pic H)^D.\\
\end{tikzcd}\]
For $S\subseteq \Omega_F$ such that $E$ satisfies strong approximation away from $S$ (e.g. $E=\SL_n$ with any non-empty finite set $S$), a diagram chasing argument shows that the Brauer-Manin obstruction to strong approximation away from $S$ (defined using projection and the whole Brauer group, cf. Remark \ref{rmk new def of obstruction})  is the only one for the classifying variety $Z$. See Theorem 3.7 of \cite{colliot-xu2009integral} where these results were obtained for the first time.
\par Now we apply Demeio's version of obstruction given by $\br _S Z$ (see Remark \ref{rmk new def of obstruction}). Let $S$ be a set containing only non-archimedean places and denote by $Z(\BA_F)_\bullet$ the adelic points where each archimedean component is collapsed to the (discrete) topological space of its connected components. We have the following commutative diagram
\[
\begin{tikzcd}
E(F)\arrow[r]\arrow[d]&Z(F) \arrow[d] \arrow[r] & {H^1(F,H)} \arrow[d] \arrow[r]                 & 1 \\ E(\BA_F^S)_\bullet      \arrow[r]&Z(\BA_F^S)_\bullet \arrow[r]\arrow[d]        & \BP_S^1(H) \arrow[r]\arrow[d] & 1\\
&(\br_S Z/\Im(\br_S F))^D\arrow[r, "\simeq"]&(\ker(\Pic H\rightarrow\prod_{v\in S}\Pic H_{F_v}))^D
\end{tikzcd}\addtag\label{diagram BM}\]
where the bottom row is induced by (\ref{Br Z = Inv = Pic + Br F}).

The right column is an exact sequence of pointed sets, which can be seen by chasing the following commutative diagram (cf. Theorem 1.2 of \cite{kottwitz1986stable} for the right-most bijection):
\[
\begin{tikzcd}
H^1(F,H)\arrow[r,equal]\arrow[d]& H^1(F,H)\arrow[d]\\
 \BP_S^1(H)\arrow[d]&\BP^1(H)\arrow[l,two heads]\arrow[d]&\prod_{v\in S}H^1(F_v,H)\arrow[l,hook']\arrow[d,"\simeq"]\\
 (\ker(\Pic H\rightarrow\prod_{v\in S}\Pic H_{F_v}))^D &(\Pic H)^D\arrow[l,two heads]& (\prod_{v\in S}\Pic H_{F_v})^D\arrow[l,hook'].
\end{tikzcd}\addtag\label{diagram removing S in PT sequence, Pic case}
\]
 If $E(F)$ is dense in $E(\BA_F^S)_\bullet$ (for example $E=\SL_n$), then chasing the diagram (\ref{diagram BM}) gives $\overline{Z(F)}^S=Z(\BA_F^S)_\bullet^{\br_S Z}$, meaning that the Brauer-Manin obstruction to strong approximation away from $S$ is the only one for $Z$. See Theorem 6.1 and Remark 6.3 of Demeio's work \cite{demeio2022etale} for the general result.
\subsection{Application to tori}
For a torus over a $p$-adic function field, strong approximation is far from being satisfied. In fact, Harari and Izquierdo (cf. \cite{harari2019espace}) showed that for $K$ the function field of a smooth projective $X$ over any field $k$ of characteristic $0$, any finite set $S\subseteq X^{(1)}$, and any $K$-torus $T$, the image of $T(K)$ in $T(\BA_K^S)$ is a discrete (hence closed) subgroup. In particular, strong approximation never holds for a torus of dimension $>0.$ The group $A(T):=T(\BA_K)/\overline{T(K)}$ measuring the defect of strong approximation is thus the same as $T(\BA_K)/T(K)$. With the exact sequence
\[1\rightarrow H^0(K,T)_\wedge\rightarrow \BP^0(T)_\wedge\rightarrow H^2(K,T^\prime)^D\rightarrow \Sha^2(K,T^\prime)^D\rightarrow 1\addtag\label{Poitou-Tate H0}\]
from the Poitou-Tate type exact sequence in \cite{harari2015weak}, where the pairing \[\BP^0(T)\times H^2(K,T^\prime)\rightarrow \QZ\] comes from the cup-products induced by $T\otimes T^\prime\rightarrow \Z(2)[2]$ (cf. (\ref{quasi isomorphisms}) and (\ref{pairings character of tori})), Harari and Izquierdo described $A(T)$ when $K$ is a $p$-adic function field:
\begin{thm}[Corollaire 6.7 and Corollaire 6.9 of \cite{harari2019espace}]\label{hi19 result}
There is an injective morphism 
\[{A(T)/\Div}\hookrightarrow H^2(K,T^\prime)\]
where ${A(T)/\Div}$ denotes the quotient of $A(T)$ by its maximal divisible subgroup. The closure $J$ of its image fits into an exact sequence
\[1\rightarrow J\rightarrow H^2(K,T^\prime)^D\rightarrow\Sha^2(T^\prime)^D\rightarrow 1.\]
There is a perfect pairing
\[A(T)_\tors\times\frac{H^2(K,T^\prime)_\wedge}{\Im(\Sha^2(K,T^\prime))}\rightarrow \QZ.\]

\end{thm}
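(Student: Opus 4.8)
The plan is to derive the whole statement from the Poitou--Tate type exact sequence (\ref{Poitou-Tate H0}), viewing it as the completed shadow of the defect of strong approximation. First I would rewrite (\ref{Poitou-Tate H0}) using $\BP^0(T)=T(\BA_K)$ and $H^0(K,T)=T(K)$, so that it reads
\[1\rightarrow T(K)_\wedge\rightarrow T(\BA_K)_\wedge\xrightarrow{\ \beta\ } H^2(K,T^\prime)^D\rightarrow \Sha^2(K,T^\prime)^D\rightarrow 1.\]
Because $H^2(K,T^\prime)$ is torsion, the cup-product pairing $\BP^0(T)\times H^2(K,T^\prime)\rightarrow\QZ$ kills $n\cdot T(\BA_K)$ against $n$-torsion, so the induced map $\psi\colon T(\BA_K)\rightarrow H^2(K,T^\prime)^D$ factors through the completion map $T(\BA_K)\rightarrow T(\BA_K)_\wedge$ followed by $\beta$. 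The reciprocity encoded in (\ref{Poitou-Tate H0}) shows $T(K)\subseteq\ker\psi$, so $\psi$ descends to $\bar\psi\colon A(T)\rightarrow H^2(K,T^\prime)^D$, which is the morphism inducing the claimed injections.

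For the first injection I would note that $H^2(K,T^\prime)^D$, as the dual of a torsion group, is profinite and hence reduced, so the maximal divisible subgroup of $A(T)$ is killed by $\bar\psi$ and we obtain $A(T)/\Div\rightarrow H^2(K,T^\prime)^D$. The real content is the reverse inclusion $\ker\bar\psi\subseteq\Div(A(T))$. For this I would identify $\bar\psi$ with the comparison map
\[A(T)=\coker\bigl(T(K)\rightarrow T(\BA_K)\bigr)\rightarrow\coker\bigl(T(K)_\wedge\rightarrow T(\BA_K)_\wedge\bigr)\]
induced by completion, whose target is exactly $J:=\ker\bigl(H^2(K,T^\prime)^D\rightarrow\Sha^2(K,T^\prime)^D\bigr)$ by the exactness of (\ref{Poitou-Tate H0}); using that $T(K)$ is discrete and closed in $T(\BA_K)$ together with the local decomposition of each $T(K_v)$ into a maximal compact subgroup and a finitely generated lattice, I would show this comparison map has divisible kernel. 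The exact sequence $1\rightarrow J\rightarrow H^2(K,T^\prime)^D\rightarrow\Sha^2(K,T^\prime)^D\rightarrow 1$ is then the tail of (\ref{Poitou-Tate H0}), and since $T(\BA_K)\rightarrow T(\BA_K)_\wedge$ has dense image, $J$ is the closure of the image of $A(T)/\Div$.

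For the perfect pairing on torsion, the plan is to feed multiplication by $m$ into (\ref{Poitou-Tate H0}), apply the snake lemma, and then pass to the limit over $m$. Restricting the pairing $A(T)\times H^2(K,T^\prime)\rightarrow\QZ$ to torsion on the left and to the appropriate completion on the right, the resulting limit identifies $A(T)_\tors$ as the exact Pontryagin dual of $H^2(K,T^\prime)_\wedge/\Im(\Sha^2(K,T^\prime))$, the nondegeneracy being inherited from the perfectness of the local dualities underlying (\ref{Poitou-Tate H0}). The hard part throughout is the bookkeeping of how the cokernel functor $A(T)=\coker(T(K)\rightarrow T(\BA_K))$ fails to commute with completion $(-)_\wedge$: isolating the maximal divisible subgroup, checking that the relevant $\varprojlim^1$ terms vanish when completing, and matching the quotient topology on $A(T)$ with the profinite topology on the target. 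This analytic--algebraic matching, rather than any formal diagram chase, is the technical heart of the argument.
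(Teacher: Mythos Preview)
The paper does not prove this theorem: it is stated as a quotation of Corollaire~6.7 and Corollaire~6.9 of \cite{harari2019espace}, with no argument given. There is therefore no ``paper's own proof'' to compare your proposal against.

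That said, your sketch is a reasonable outline of how the cited results are actually obtained in \cite{harari2019espace}: one does start from the Poitou--Tate sequence (\ref{Poitou-Tate H0}), factor the reciprocity map through the profinite completion, and then analyse how $A(T)=\coker(T(K)\to T(\BA_K))$ compares with $\coker(T(K)_\wedge\to T(\BA_K)_\wedge)$. You have correctly identified the technical heart --- showing that the kernel of $\bar\psi$ is exactly the maximal divisible subgroup --- and you rightly flag that this requires structural input about $T(K_v)$ and the discreteness of $T(K)$ in $T(\BA_K)$ (this is Lemme~2.7 and the surrounding material in \cite{harari2019espace}, which the present paper invokes in the proof of Corollary~\ref{cor SA tori}). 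Your treatment of the perfect pairing on torsion is more hand-wavy: the actual argument in \cite{harari2019espace} is somewhat delicate about which $\varprojlim^1$ terms vanish and why $\Sha^2(K,T')$ injects into $H^2(K,T')_\wedge$, and ``apply the snake lemma and pass to the limit'' elides real work. Note also that the target of the first injection should be $H^2(K,T')^D$, not $H^2(K,T')$ as printed; your proposal silently (and correctly) fixes this.
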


Now we explain that this is compatible with our reciprocity obstruction given by $H^3_\nr(T,\QZ(2))$.
\begin{prop}\label{prop cup product commutes with rec obs}
There is a morphism $H^2(K,T^\prime)\rightarrow H^3_\nr(T,\QZ(2))$ which fits into the following commutative diagram
\[\begin{tikzcd}
{H^0(K_v,T)} \arrow[r,phantom, "\times" description] \ar[equal,d] & {H^2(K,T^\prime)} \arrow[d,] \arrow[r]           & {H^3(K_v,\QZ(2))} \arrow[equal,d] \\
T(K_v) \arrow[r,phantom, "\times" description]                                      & {H^3_\nr(T,\QZ(2))} \arrow[r, "(\ref{pairing at one place})"]                   & {H^3(K_v,\QZ(2)).}                               
\end{tikzcd}\]
\end{prop}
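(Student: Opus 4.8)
The plan is to build the map $H^2(K,T')\to H^3_\nr(T,\QZ(2))$ by cupping a given class against the tautological section of the torus $T$, and then to read off both the unramifiedness and the stated compatibility from the fact that this construction is induced by a single class living globally over $T$.

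First I would produce the tautological class. Viewing $T$ as a smooth $K$-variety, the identity morphism furnishes a tautological class $\mathrm{id}_T\in H^0(T,T)$ (cohomology of the variety $T$ with values in the sheaf represented by $T$), whose restriction to the generic point $\eta\colon\Spec K(T)\to T$ is the universal point $\eta\in T(K(T))=H^0(K(T),T)$. Given $a\in H^2(K,T')$, let $a_T\in H^2(T,T')$ be its pullback along $T\to\Spec K$. Using the quasi-isomorphisms (\ref{quasi isomorphisms}) together with the pairings (\ref{pairings character of tori}), which combine into $T\otimes^{\BL}T'\to\Z(2)[2]$, I would form the cup product
\[\mathrm{id}_T\cup a_T\in H^4(T,\Z(2)),\]
and define $\Phi(a)\in H^3(K(T),\QZ(2))$ to be the restriction of this class to $\eta$, transported through the connecting isomorphism $H^3(K(T),\QZ(2))\simeq H^4(K(T),\Z(2))$ valid for the field $K(T)$ in degree $3>2$. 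Concretely $\Phi(a)=\eta\cup a_{K(T)}$, the incarnation in degrees $0$ and $2$ of the pairing (\ref{torus pairing}).

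The main point is unramifiedness. Here I would exploit that $\Phi(a)$ is, by construction, the value at the generic point of a class defined over all of $T$: for every codimension one point $P\in T^{(1)}$, the localisation of $\mathrm{id}_T\cup a_T$ already lives in $H^4(\mathcal O_{T,P},\Z(2))$, since both factors extend integrally — the section $\mathrm{id}_T$ is defined everywhere on $T$, and $a_T$ is a constant pullback from the base. Hence $\Phi(a)$ comes from a class in $H^3(\mathcal O_{T,P},\QZ(2))$ for each such $P$, which is exactly characterisation \ref{def}\ref{3} of $H^3_\nr(T,\QZ(2))$, so $\Phi$ indeed lands in $H^3_\nr(T,\QZ(2))$. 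I expect the one genuinely delicate step to be the coefficient bookkeeping: matching the $\Z(2)[2]$-valued cup product with the $\QZ(2)$-coefficient residue/localisation sequence defining $H^3_\nr$, and checking these identifications are compatible over the local rings $\mathcal O_{T,P}$ rather than only over fields. As a safeguard I would also be prepared to compute residues directly after passing to a finite separable splitting field of $T$, where $T$ becomes $\Gm^r$, the universal point $\eta$ becomes the tuple of coordinate units, and the residue of a symbol $\{u\}\cup\beta$ with $u$ a unit and $\beta$ unramified vanishes.

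Finally, commutativity of the square follows from functoriality. For a point $x\in T(K_v)$ with image $P\in T$, the evaluation pairing (\ref{pairing at one place}) computes $\Phi(a)(x)$ by lifting $\Phi(a)$ to $H^3(\mathcal O_{T,P},\QZ(2))$ and pulling back along $x$. Because $\Phi(a)$ is the generic restriction of the global class $\mathrm{id}_T\cup a_T$, this lift is just that global class localised at $P$, and pulling back along $x\colon\Spec K_v\to T$ sends $\mathrm{id}_T$ to $x\in H^0(K_v,T)=T(K_v)$ and $a_T$ to $a_{K_v}$. By compatibility of cup products with pullback, $\Phi(a)(x)=x\cup a_{K_v}$, which is precisely the top horizontal pairing of the diagram, namely the cup product induced by (\ref{pairings character of tori}) in degrees $0$ and $2$ — the degree $(0,2)$ analogue of (\ref{local duality pairing}). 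This yields the asserted commutativity.
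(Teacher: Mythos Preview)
Your proposal is correct and follows essentially the same route as the paper: both cup the identity $\Id_T\in H^0(T,T)$ with the pulled-back class to produce an element of $H^4(T,\Z(2))$, then pass to $H^3_\nr(T,\QZ(2))$, and deduce commutativity from functoriality of the cup product under the pullback $x^*$. The one place the paper is cleaner is precisely the ``delicate step'' you flag: rather than checking by hand that a class in $H^4(\mathcal O_{T,P},\Z(2))$ maps into $H^3(\mathcal O_{T,P},\QZ(2))$, the paper invokes Kahn's edge map $H^4(Z,\Z(2))\to H^0_\Zar(Z,R^4\alpha_*\Z(2))\simeq H^0_\Zar(Z,R^3\alpha_*\QZ(2))=H^3_\nr(Z,\QZ(2))$, built from the Leray spectral sequence for $\alpha\colon Z_\et\to Z_\Zar$ together with the isomorphism $\Q(2)_\Zar\simeq R\alpha_*\Q(2)$, which packages the $\Z(2)$-to-$\QZ(2)$ passage globally and avoids working over individual local rings.
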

\begin{proof}
Given an element $a:\Spec K_v\rightarrow T$ in $H^0(K_v,T)$, the functoriality of the cup-products induced by $T\otimes T^\prime\rightarrow \Z(2)[2]$ gives the following commutative diagram:
\[
\begin{tikzcd}
H^0(K_v,T)\arrow[r,phantom, "\times" description] & H^2(K,T^\prime)\arrow[d,equal]\arrow[r]& H^4(K_v,\Z(2))\\
H^0(T,T)\arrow[u,"a^*"]\arrow[r,phantom,"\times" description]& H^2(K, T^\prime)\arrow[r]& H^4(T,\Z(2))\arrow[u,"a^*"],
\end{tikzcd}
\]
where $a^*$ denotes the pullback maps induced by $a$. The identity $\Id\in H^0(T,T)$ is mapped to $a$ under $a^*$, and thus we get the commutativity of the upper part of the following diagram:
\[\begin{tikzcd}
{H^0(K_v,T)} \arrow[r,phantom, "\times" description] \ar[equal,d] & {H^2(K,T^\prime)} \arrow[d, "\Id\cup -"] \arrow[r]           & {H^4(K_v,\BBZ(2))} \arrow[equal,d] \\
{T(K_v)} \arrow[r,phantom, "\times" description] \arrow[d, equal] & {H^4(T,\BBZ(2))} \arrow[d, "{(\ref{kahn map})}"] \arrow[r,"\text{evaluation}"] & {H^4(K_v,\BBZ(2))} \arrow[d,leftarrow, "\simeq"]                      \\
T(K_v) \arrow[r,phantom, "\times" description]                                      & {H^3_\nr(T,\QZ(2))} \arrow[r, "(\ref{pairing at one place})"]                   & {H^3(K_v,\QZ(2))}                               
\end{tikzcd}\addtag\label{diagram cup product pairing}\]
where the morphism $H^2(K,T^\prime)\rightarrow H^4(T,\Z(2))$ is given by taking the cup-product with $\Id\in H^0(T,T)$.
\par Now we construct and prove the commutativity of the lower part of this diagram. Recall that (cf. Proposition 2.9 of \cite{kahn2012classes}) for a smooth $K$-variety $Z,$ we have a natural map $H^4(Z,\Z(2))\rightarrow H^3_{\nr}(Z,\QZ(2))$ 
defined as follows. Let $\alpha: Z_\et\rightarrow Z_{\Zar}$ be the change-of-sites map. The natural map $\BBQ(2)_\Zar\rightarrow R\alpha_*\BBQ(2)$ is an isomorphism in the derived category of Zariski sheaves (cf. Lemma 2.5 and Theorem 2.6 of \cite{kahn2012classes}). Since $\BBQ(2)_\Zar$ is concentrated in degree $\leq 2$, we have $R^4\alpha_*\BBQ(2)=R^3\alpha_*\BBQ(2)=0$, hence $R^4\alpha_*\Z(2)\simeq R^3\alpha_*\QZ(2)$. Then the Leray spectral sequence for $\alpha$ yields an edge map
\[H^4(Z,\Z(2)\rightarrow H^0_{\Zar}(Z,R^4{\alpha_*}\Z(2))\simeq H^0_{\Zar}(Z,R^3\alpha_*\QZ(2))\overset{\text{\ref{def}}}{=} H^3_{\nr}(Z,\QZ(2)).\addtag\label{kahn map}\]
The construction of the Gersten resolution implies we have the commutative diagram
\[\begin{tikzcd}
{H^4(Z,\BBZ(2))} \arrow[d, "(\ref{kahn map})"] \arrow[r] & H^4(K(Z),\BBZ(2))\arrow[d,leftarrow, "\simeq"]\\
{H^3_\nr(Z,\QZ(2))}\arrow[r,hook]& H^3(K(Z),\QZ(2))
\end{tikzcd}\]
 where $H^4(Z,\BBZ(2))\rightarrow H^4(K(Z),\BBZ(2))$ is the restriction map. This gives the commutativity of the lower part of (\ref{diagram cup product pairing}). 
\end{proof}
\begin{cor}\label{cor SA tori}
The morphism $T(\BA_K)\rightarrow H^3_{\nr}(T,\QZ(2))^D$ induces injective morphisms
\[{A(T)/\Div}\hookrightarrow H^3_{\nr}(T,\QZ(2))^D,\]
\[A(T)_\tors\hookrightarrow ({H^3_{\nr}(T,\QZ(2))_\wedge})^D.\]
\end{cor}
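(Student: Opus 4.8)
The plan is to deduce both injections from the corresponding results of Harari--Izquierdo (Theorem \ref{hi19 result}) by transporting them through the morphism $\varphi\colon H^2(K,T^\prime)\to H^3_\nr(T,\QZ(2))$ constructed in Proposition \ref{prop cup product commutes with rec obs}. Write $\lambda\colon T(\BA_K)\to H^3_\nr(T,\QZ(2))^D$ for the map adjoint to the reciprocity pairing (\ref{pairing adelic space H3nr}), and $\mu\colon T(\BA_K)=\BP^0(T)\to H^2(K,T^\prime)^D$ for the map adjoint to the cup-product pairing that underlies Theorem \ref{hi19 result}. First I would check that $\lambda$ descends to $A(T)$: the generalized Weil reciprocity law (\ref{Weil}) makes $T(K)$ orthogonal to $H^3_\nr(T,\QZ(2))$, and since $\overline{T(K)}=T(K)$ for a torus over such $K$ (the discreteness input from \cite{harari2019espace} recalled before Theorem \ref{hi19 result}), $\lambda$ induces $\bar\lambda\colon A(T)\to H^3_\nr(T,\QZ(2))^D$. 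The key identity is $\varphi^D\circ\lambda=\mu$, obtained by summing Proposition \ref{prop cup product commutes with rec obs} over places: for $(x_v)_v\in T(\BA_K)$ and $b\in H^2(K,T^\prime)$ the local square identifies $\inv_v(x_v\cup b)$ with $\inv_v\!\big(\varphi(b)(x_v)\big)$, where $\inv_v=\Cor_{\kappa(v)/k}\circ\partial_v$ and $\varphi(b)(x_v)$ is the value of (\ref{pairing at one place}); summing yields $\mu((x_v)_v)(b)=\lambda((x_v)_v)(\varphi(b))$.

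For the first injection I would argue that, $H^3_\nr(T,\QZ(2))$ being torsion, its dual $H^3_\nr(T,\QZ(2))^D$ is profinite and therefore reduced (an inverse limit of finite groups has no nonzero divisible subgroup, since the image of a divisible subgroup in each finite quotient is trivial). Hence $\bar\lambda$ annihilates the maximal divisible subgroup and induces $A(T)/\Div\to H^3_\nr(T,\QZ(2))^D$. By Theorem \ref{hi19 result} the map $A(T)/\Div\to H^2(K,T^\prime)^D$ induced by $\mu$ is injective; as it equals $\varphi^D$ precomposed with $A(T)/\Div\to H^3_\nr(T,\QZ(2))^D$, the latter must be injective as well.

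For the second injection I would restrict to torsion and use the standard identification $(C_\wedge)^D\cong (C^D)_\tors$ for a torsion abelian group $C$ (apply $\Hom(-,\QZ)$ to $C\xrightarrow{m}C\to C/mC\to 0$ to get $(C/mC)^D\cong C^D[m]$, then pass to the limit $C^D=\varprojlim_m(C/mC)^D$ and its dual), applied to $C=H^3_\nr(T,\QZ(2))$ and $C=H^2(K,T^\prime)$; this identification is natural in $C$, so $\varphi$ induces $(\varphi_\wedge)^D$ compatibly with $\varphi^D$. Restricting $\lambda$ and $\mu$ to the torsion subgroup $A(T)_\tors$ gives $\lambda_\tors\colon A(T)_\tors\to (H^3_\nr(T,\QZ(2))_\wedge)^D$ and $\mu_\tors\colon A(T)_\tors\to (H^2(K,T^\prime)_\wedge)^D$ with $(\varphi_\wedge)^D\circ\lambda_\tors=\mu_\tors$. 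By Theorem \ref{hi19 result}, $\mu_\tors$ is the composite of the isomorphism $A(T)_\tors\xrightarrow{\simeq}\big(H^2(K,T^\prime)_\wedge/\Im(\Sha^2(K,T^\prime))\big)^D$ coming from the perfect pairing with the injection dual to $H^2(K,T^\prime)_\wedge\twoheadrightarrow H^2(K,T^\prime)_\wedge/\Im(\Sha^2(K,T^\prime))$, hence $\mu_\tors$ is injective. It follows that $\lambda_\tors$ is injective, which is the claim.

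The step I expect to be the main obstacle is this last one: I must verify that restricting the cup-product map $\mu$ to $A(T)_\tors$ reproduces exactly the map arising from the Harari--Izquierdo perfect pairing under the identification $(H^2(K,T^\prime)_\wedge)^D=(H^2(K,T^\prime)^D)_\tors$, and that the completion-then-dualize functors are natural enough to intertwine $\varphi^D$ with $(\varphi_\wedge)^D$. This is essentially bookkeeping with the completion and duality functors, but it is where one must be careful that the two pairings --- theirs built from $T\otimes^{\BL}T^\prime\to\Z(2)[2]$ and ours routed through (\ref{pairing at one place}) via $\varphi$ --- genuinely agree after restriction to torsion.
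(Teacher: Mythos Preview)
Your proposal is correct and follows essentially the same route as the paper: both arguments transport the Harari--Izquierdo injections through the factorization $\varphi^{D}\circ\lambda=\mu$ coming from Proposition~\ref{prop cup product commutes with rec obs}, and for the torsion statement both pass to the induced map $\varphi_\wedge$ on completions. The paper's proof is terser---it cites Lemme~2.7 and Th\'eor\`eme~6.6 of \cite{harari2019espace} directly rather than restating Theorem~\ref{hi19 result}, and it does not spell out why $\bar\lambda$ kills the maximal divisible subgroup---but the substance is identical. One small slip in your parenthetical: the identification $(C_\wedge)^D\cong (C^D)_\tors$ should read $(C_\wedge)^D=\varinjlim_m (C/mC)^D=\varinjlim_m (C^D)[m]=(C^D)_\tors$, not $C^D=\varprojlim_m(C/mC)^D$; the direction of the limit flips under dualizing.
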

\begin{proof}
Proposition \ref{prop cup product commutes with rec obs} along with (\ref{Poitou-Tate H0}) gives the exact sequence
\[1\rightarrow T(K)\rightarrow T(\BA_K)\rightarrow H^3_\nr(T,\QZ(2))^D.\]
Now apply Lemme 2.7 and Théorème 6.6 of \cite{harari2019espace}, and we get the first injection. \par
The morphism constructed in Proposition \ref{prop cup product commutes with rec obs} gives $H^2(K,T^\prime)/m\rightarrow H^3_\nr(T,\QZ(2))/m$ for all $m>0$, and thus a morphism $H^2(K,T^\prime)_\wedge \rightarrow H^3_\nr(T,\QZ(2))_\wedge$, compatible with the pairing in Theorem \ref{hi19 result}, yielding the second injection.
\end{proof}

\section{Weak approximation for classifying varieties}
In the study of weak approximation problems over $K$ a $p$-adic function field, we can define a pairing (cf. \cite{harari2015weak})
\[\prod_{v\in X^{(1)}}Z(K_v)\times H_\nr^3(K(Z)/K,\QZ(2))\rightarrow\QZ\addtag\label{WA pairing}\]
whose left kernel contains the closure $\overline {Z(K)}$ inside $\prod_{v\in X^{(1)}}Z(K_v)$ with respect to the product topology, defining the reciprocity obstruction to weak approximation.
\par
For $Z=E/H$ a classifying variety where the special rational group $E$ is split semisimple simply connected, we can establish the same compatibility between the reciprocity obstruction and the cohomological invariants.\par
An invariant $I\in \Inv^3(H,\QZ(2))$ is called \textit{unramified} if for every field extension $F/K$ and every element $H^1(F,H)$, we have $I(a)$ in $H^3_\nr(F/K,\QZ(2))$ the group of elements unramified with respect to all discrete valuations of $F$ trivial on $K$. 
Under the identification (\ref{theta}), the subgroup $H^3_\nr(K(Z)/K,\QZ(2))$ of $H^3_\nr(Z,\QZ(2))$ is identified with the group $\Inv^3_\nr(H,\QZ(2))$ of unramified invariants (cf. Proposition 4.1 of \cite{merkurjev2016reductive}). We denote by $\Inv^3_\nr(H,\QZ(2))_\rmnorm$ the group of unramified normalized invariants. 
\begin{prop}\label{WA analogue pairing prop}
There is a well-defined pairing 
\begin{align*}
\prod_{v\in X^{(1)}}Z(K_v)\times\Inv_\nr^3(H,\QZ(2))&\rightarrow\QZ\\
((x_v)_v,I) &\mapsto \sum_{v\in X^{(1)}}\Cor_{\kappa(v)/k}(\partial_v(I_{K_v}(x_v^*E)))\nonumber\end{align*}
induced by evaluating an invariant $I$ at the $H$-torsors $x_v^*E$ over $K_v$, where $x_v^*E$ denotes the fiber of the $H$-torsor $E\rightarrow Z$ above the point $x_v\in Z(K_v)$. 
This pairing is compatible with (\ref{WA pairing}) in the sense of the following commutative diagram 
\[
\begin{tikzcd}
\prod_{v\in X^{(1)}}Z(K_v) \arrow[d,Rightarrow, no head] \arrow[r,phantom,"\times" description] & {\Inv_\nr^3(H,\QZ(2))_\rmnorm} \arrow[r]        & \QZ \arrow[d,Rightarrow, no head] \\
\prod_{v\in X^{(1)}}Z(K_v)                              \arrow[r,phantom,"\times" description] & {H^3_\nr(K(Z)/K,\QZ(2))/\Im(H^3(K,\QZ(2)))} \arrow[r] \arrow[u, leftarrow,"\simeq","\theta"' ] & \QZ.
\end{tikzcd}\addtag\label{unramified pairing}\]
\end{prop}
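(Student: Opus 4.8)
The plan is to reduce the statement to the place-by-place compatibility already established in Proposition \ref{prop inv rec compatible}, and then to dispatch the one genuinely new point, the well-definedness of the sum over the \emph{unrestricted} product $\prod_{v\in X^{(1)}}Z(K_v)$. For each place $v$, the commutative diagram in the proof of Proposition \ref{prop inv rec compatible} gives, for every $I\in\Inv^3(H,\QZ(2))$ and every $x_v\in Z(K_v)$, the equality $I_{K_v}(x_v^*E)=\theta(I)(x_v)$ in $H^3(K_v,\QZ(2))$, where $\theta(I)(x_v)$ is the value of the evaluation pairing (\ref{pairing at one place}). Hence the terms $\Cor_{\kappa(v)/k}(\partial_v(\,\cdot\,))$ of the invariant pairing and of the pairing (\ref{WA pairing}) coincide at every $v$, once we know both are well-defined.

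For the finiteness I would use that $I$ is unramified. By Proposition 4.1 of \cite{merkurjev2016reductive} the isomorphism (\ref{theta}) restricts to $\Inv^3_\nr(H,\QZ(2))\simeq H^3_\nr(K(Z)/K,\QZ(2))$, so $\theta(I)$ extends over a smooth compactification $Z^c$ of $Z$. Given $(x_v)_v\in\prod_v Z(K_v)\subseteq\prod_v Z^c(K_v)$, I would shrink $U\subseteq X$ so that a smooth proper model $\mathcal Z^c\to U$ carries $\theta(I)$ away from finitely many closed fibres; by the valuative criterion of properness each $x_v$ extends to an integral point of $\mathcal Z^c(\mathcal O_v)$ for $v\in U$, so $\theta(I)(x_v)$ comes from $H^3(\mathcal O_v,\QZ(2))$ and is unramified, forcing $\partial_v(\theta(I)(x_v))=0$ for all but finitely many $v$. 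Thus the sum is finite and the invariant pairing is well-defined; equivalently, its well-definedness is inherited from that of (\ref{WA pairing}), proved in \cite{harari2015weak}, through the term-by-term agreement above.

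Summing the local identities over all $v\in X^{(1)}$ and passing to the quotient by $\Im(H^3(K,\QZ(2)))$ then yields the commutative diagram (\ref{unramified pairing}). The step I expect to be the main obstacle is the finiteness argument, which is the only place where unramifiedness of $I$ enters essentially: in the strong-approximation setting of Proposition \ref{prop inv rec compatible} the required integral points were furnished by the restricted-product structure, whereas over the full product they have to be produced from the properness of $\mathcal Z^c$, and that properness is usable only because $\theta(I)$ extends over $Z^c$, i.e. only because $I$ is unramified.
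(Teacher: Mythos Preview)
Your proposal is correct and follows essentially the same approach as the paper, whose proof reads in full ``Same as that of Proposition \ref{prop inv rec compatible}.'' You are simply more explicit about the one point that genuinely differs from the strong-approximation case, namely the finiteness of the sum over the unrestricted product, which the paper leaves implicit (presumably because the well-definedness of (\ref{WA pairing}) is already established in \cite{harari2015weak} and the term-by-term identification with $\theta$ transfers it).
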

\begin{proof}
Same as that of Proposition \ref{prop inv rec compatible}.
\end{proof}
As an application, we show how this method gives answers to weak approximation problems for classifying varieties of tori over $K$. The following result was also obtained by Linh in a different way in his very recent work (cf. Theorem B of \cite{linh2022arithmetics}). Now we give our proof using cohomological invariants.
\begin{thm}\label{thm WA SLn/T}
    Let $Z=E/T$ be a classifying variety of a torus $T$ over $K$. Then the reciprocity obstruction to weak approximation is the only one for $Z$. In fact, there is a morphism $$\Sha_S^1(T^\prime)\rightarrow H^3_\nr(K(Z)/K,\QZ(2))/\Im(H^3(K,\QZ(2)))$$ such that the subset of elements in $\prod_{v\in S}Z(K_v)$ orthogonal to the image of $\Sha_S^1(T^\prime)$ in $\frac{H^3_\nr(K(Z)/K,\QZ(2))}{\Im(H^3(K,\QZ(2))) }$ with respect to the pairing (\ref{WA pairing}) already equals the closure $\overline {Z(K)}$ in the topological product $\prod_{v\in S}Z(K_v)$, where $\Sha^1_S(T^\prime):=\Ker(H^1(K,T^\prime)\rightarrow\prod_{v
\in X^{(1)}\backslash S}H^1(K_v,T^\prime))$ for a finite set $S$.
\end{thm}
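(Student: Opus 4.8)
The plan is to run the argument of Theorem \ref{thm rec only one for SLn/tori} in parallel, replacing the restricted adelic products and the strong approximation of $E$ by the finite products $\prod_{v\in S}Z(K_v)$ and the weak approximation of $E$. I would start from the commutative diagram with exact rows
\[
\begin{tikzcd}
E(K)\arrow[r]\arrow[d]&Z(K) \arrow[d] \arrow[r] & {H^1(K,T)} \arrow[d] \arrow[r] & 1 \\
\prod_{v\in S}E(K_v)\arrow[r]&\prod_{v\in S}Z(K_v) \arrow[r] & \prod_{v\in S}H^1(K_v,T) \arrow[r] & 1,
\end{tikzcd}
\]
whose rows are exact because $E$ is special, so that $Z(F)/E(F)=H^1(F,T)$ for every extension $F/K$. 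As $E$ is split semisimple simply connected it is a rational variety, hence $E(K)$ is dense in $\prod_{v\in S}E(K_v)$ (the big cell reduces this to density of $K$ in $\prod_{v\in S}K_v$). Since the orbit map $E(K_v)\to Z(K_v)$ is open, the vertical map on the right is locally constant, and a diagram chase identical to the number field case of (\ref{diagram removing S in PT sequence, Pic case}) shows that $\overline{Z(K)}$ is exactly the preimage of $W:=\mathrm{im}\!\big(H^1(K,T)\to\prod_{v\in S}H^1(K_v,T)\big)$ under $\prod_{v\in S}Z(K_v)\to\prod_{v\in S}H^1(K_v,T)$.

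The next step is to compute $W$ by duality. Feeding $T'$ into the Poitou--Tate sequence (\ref{Poitou}) and projecting onto the finite set $S$ (extending classes by zero outside $S$ to land inside $\BP^1(T')$), I would obtain the exact sequence
\[1\to\Sha^1(K,T')\to\Sha^1_S(T')\xrightarrow{\ \mathrm{res}_S\ }\prod_{v\in S}H^1(K_v,T')\to H^1(K,T)^D,\]
the kernel on the left being the classes trivial at every place. Dualizing this sequence and using the perfect local duality $H^1(K_v,T')\times H^1(K_v,T)\to\QZ$ of (\ref{local duality pairing}) gives $W=\big(\mathrm{res}_S\Sha^1_S(T')\big)^{\perp}$; that is, $(x_v)_{v\in S}\in W$ if and only if $\sum_{v\in S}\langle x_v,\mathrm{res}_v(a)\rangle=0$ for all $a\in\Sha^1_S(T')$.

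It remains to match this with the reciprocity pairing (\ref{WA pairing}). I would take the morphism $\Sha^1_S(T')\to H^3_\nr(K(Z)/K,\QZ(2))/\Im(H^3(K,\QZ(2)))$ to be the Blinstein--Merkurjev map $\alpha$ of (\ref{inv of torus sequence}) followed by the isomorphism $\theta$ of (\ref{theta}), valued in the unramified invariants $\Inv^3_\nr(T,\QZ(2))_\rmnorm\cong H^3_\nr(K(Z)/K,\QZ(2))/\Im(H^3(K,\QZ(2)))$. By the compatibility diagram of Proposition \ref{WA analogue pairing prop} and the defining formula $\alpha(a)(b)=a\cup b$, the value of (\ref{WA pairing}) between $(x_v)_v$ and $\alpha(a)$ equals $\sum_{v}\Cor_{\kappa(v)/k}\big(\partial_v(a_v\cup[x_v^*E])\big)$, which collapses to $\sum_{v\in S}\langle[x_v^*E],\mathrm{res}_v(a)\rangle$ because $a_v=0$ for $v\notin S$. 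Hence $(x_v)$ is orthogonal to the image of $\Sha^1_S(T')$ exactly when $([x_v^*E])_{v\in S}\in W$, i.e. exactly when $(x_v)\in\overline{Z(K)}$ by the first two steps. Since the image of $\Sha^1_S(T')$ is a subgroup of $H^3_\nr(K(Z)/K,\QZ(2))/\Im$, the subset orthogonal to the whole group is contained in the subset orthogonal to this image, which we have just shown equals $\overline{Z(K)}$; as reciprocity provides the reverse inclusion, the two coincide and the reciprocity obstruction is indeed the only one.

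I expect the main obstacle to be the very construction of the morphism out of $\Sha^1_S(T')$: one must check that $\alpha$ carries $\Sha^1_S(T')$ into the \emph{unramified} normalized invariants, so that its image genuinely lies in $H^3_\nr(K(Z)/K,\QZ(2))$, where the pairing (\ref{WA pairing}) is defined. Unlike the elements of $\Inv^3(T,\QZ(2))_\rmnorm$, which are only unramified on $Z$ itself, controlling the residues along the boundary of a smooth compactification $Z^c$ requires the finer description of $\Inv^3_\nr(T,\QZ(2))_\rmnorm$ from Blinstein--Merkurjev and Merkurjev. The remaining bookkeeping --- passing from the restricted product $\BP^1$ to the finite product $\prod_{v\in S}$, and the exactness of the dualizations --- is routine within the framework of \cite{harari2015weak}.
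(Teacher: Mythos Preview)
Your architecture is exactly the paper's: the same diagram with exact rows, the same Poitou--Tate column for $T'$ projected to the finite set $S$, and the same diagram chase using weak approximation for the split semisimple simply connected group $E$. The only substantive point left open is, as you say yourself, the construction of the morphism
\[
\Sha^1_S(T')\longrightarrow \Inv^3_\nr(T,\QZ(2))_\rmnorm\simeq H^3_\nr(K(Z)/K,\QZ(2))/\Im(H^3(K,\QZ(2))).
\]
Composing $\alpha$ of (\ref{inv of torus sequence}) with $\theta$ does not a priori land in the \emph{unramified} invariants: $\alpha$ only targets $\Inv^3(T,\QZ(2))_\rmnorm$, and nothing in its definition forces the residues along the boundary of a compactification $Z^c$ to vanish. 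So as written your morphism is not yet defined where the weak-approximation pairing (\ref{WA pairing}) lives.

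The paper closes this gap with a flasque resolution $1\to T\xrightarrow{g}T_1\to P\to 1$ ($P$ quasi-trivial). Blinstein--Merkurjev's Theorem~5.5 gives a natural isomorphism $\Inv^3(T_1,\QZ(2))_\rmnorm\xrightarrow{\sim}\Inv^3_\nr(T,\QZ(2))_\rmnorm$, $I_1\mapsto(b\mapsto I_1(g_*b))$. Dualizing the resolution one gets $1\to P'\to T_1'\to T'\to 1$, and since $\Sha^2_S(P')=0$ (Lemma~3.2 of \cite{harari2015weak}) the induced map $\Sha^1_S(T_1')\to\Sha^1_S(T')$ is an isomorphism. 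The morphism is then the composite
\[
\Sha^1_S(T')\simeq\Sha^1_S(T_1')\hookrightarrow H^1(K,T_1')\xrightarrow{\ \alpha_{T_1}\ }\Inv^3(T_1,\QZ(2))_\rmnorm\simeq\Inv^3_\nr(T,\QZ(2))_\rmnorm,
\]
which now honestly takes values where (\ref{WA pairing}) is defined. The compatibility with your local computation is the projection formula $a\cup b=a_1\cup g_*(b)$ for $a_1\in H^1(K,T_1')$ mapping to $a\in H^1(K,T')$; this shows $\alpha(a)(b)=\alpha_{T_1}(a_1)(g_*b)$, so the invariant attached to $a\in\Sha^1_S(T')$ by the paper's composite agrees with yours and your pairing identity $\sum_{v\in S}\langle[x_v^*E],\mathrm{res}_v(a)\rangle$ goes through verbatim. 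In short, the missing ingredient you anticipated is precisely the flasque resolution plus Theorem~5.5 of \cite{blinstein2013cohomological}; once that is supplied, your argument coincides with the paper's.
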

\begin{proof}
Blinstein and Merkurjev (cf. Theorem 5.5 of \cite{blinstein2013cohomological}) gave a natural isomorphism 
\begin{align*}\Inv^3(T_1,\QZ(2))&\simeq \Inv^3_\nr(T,\QZ(2))\\
I_1&\mapsto (b\mapsto I_1(g_*(b)))
\end{align*} where $T_1$ is a flasque torus, fitting into the exact sequence $1\rightarrow T\xrightarrow{g} T_1\rightarrow P\rightarrow 1$
with $P$ a quasi-trivial torus (such a resolution for a given $T$ always exists by Lemma 0.6 of \cite{colliot1987principal}).  
 We have its dual exact sequence $1\rightarrow P^\prime\rightarrow T_1^\prime\rightarrow T^\prime\rightarrow 1$
where $P^\prime$ being the dual of a quasi-trivial torus is still quasi-trivial. The induced exact sequence 
\[1\rightarrow H^1(K,T_1^\prime)\rightarrow H^1(K,T^\prime)\rightarrow H^2(K,P^\prime)\] gives an isomorphism $\Sha_S^1(T_1^\prime)\simeq \Sha_S^1(T^\prime)$ because $\Sha^2_S(P^\prime)=0$ by Lemma 3.2 of \cite{harari2015weak}. Consider the composite map 
\[\begin{split}\Sha_S^1(T^\prime)\simeq\Sha_S^1(T_1^\prime)\hookrightarrow H^1(K,T_1^\prime)\xrightarrow[(\ref{torus pairing})]{\alpha} \Inv^3(T_1,\QZ(2))_\rmnorm \\
\simeq \Inv^3_{\nr}(T,\QZ(2))_\rmnorm \simeq H^3_\nr(K(Z)/K,\QZ(2))/\Im(H^3(K,\QZ(2)))\end{split}\addtag\label{long iso construction WA tori}\]
which gives rise to the map at the bottom of the following diagram
\[
\begin{tikzcd}
E(K)\arrow[r]\arrow[d]&Z(K) \arrow[d] \arrow[r] & {H^1(K,T)} \arrow[d] \arrow[r]                 & 1 \\ \prod_{v\in S}E(K_v)      \arrow[r]&\prod_{v\in S}Z(K_v) \arrow[r]\arrow[d]        & \prod_{v\in S} H^1(K_v,T) \arrow[r]\arrow[d] & 1\\
&(\frac{H^3_\nr(K(Z)/K,\QZ(2))}{\Im(H^3(K,\QZ(2))))})^D\arrow[r]&\Sha^1_S(T^\prime)^D.\\
\end{tikzcd}\addtag\label{diagram WA SLn/torus}\]
 The right column of the diagram comes from the Poitou-Tate type exact sequence (\ref{Poitou}) and is exact by the same argument as in the proof in Proposition \ref{thm rec only one for SLn/tori}.
 Now we check that the bottom square of (\ref{diagram WA SLn/torus}) is commutative. In virtue of Proposition \ref{WA analogue pairing prop}, it suffices to show the commutativity of the following diagram
 \[\begin{tikzcd}
{H^1(K_v,T)} \arrow[r,phantom, "\times" description] \ar[equal,d] & {\Sha_S^1(T^\prime)} \arrow[d,"(\ref{long iso construction WA tori})"] \arrow[r, "(\ref{torus pairing})"]           & \QZ \arrow[equal,d] \\
H^1(K_v,T) \arrow[r,phantom, "\times" description]                                      & \Inv^3_\nr(T,\QZ(2))\arrow[r]                   & \QZ.                          
\end{tikzcd}\]
Let $a\in\Sha^1_S(T^\prime)$. Let $a_1\in H^1(K,T_1^\prime),I_1\in\Inv^3(T_1,\QZ(2))_\rmnorm, I\in\Inv^3_\nr(T,\QZ(2))_\rmnorm$ be the images of $a$ through the consecutive maps (\ref{long iso construction WA tori}). For all $b\in H^1(K_v,T)$, we have
\[I(b)=I_1(g_*(b))= a_1\cup g_*(b)=a\cup b \]
where $I(b)$ (resp. $a\cup b$) gives exactly the pairing between $b$ and $I$ (resp. $a$) with respect to the lower (resp. upper) row, yielding the commutativity.
\par
The special rational group $E$ satisfies weak approximation, then chasing the diagram (\ref{diagram WA SLn/torus}) gives the desired result.
\end{proof}
It is still an open question whether weak approximation holds for $Z=\SL_n/H$ where $H$ is a semisimple simply connected group over $K$ a $p$-adic function field. In case of a negative answer, we might hope to find a reciprocity obstruction using $H^3_\nr(K(Z)/K,\QZ(2))\simeq \Inv^3_\nr (H,\QZ(2))$. Merkurjev studied $\Inv^3_\nr(H,\QZ(2))$ for all classical semisimple simply connected groups $G$, and he showed in \cite{merkurjev2002unramified} that $\Inv^3_\nr(H,\QZ(2))_\rmnorm$ is trivial except for $H$ of type $\prescript{2}{}{\mathsf A}_{n-1},\prescript{2}{}{\mathsf D_3}$ or $\prescript{1}{}{\mathsf D_4}$ under certain conditions, where $\Inv^3_\nr(H,\QZ(2))_\rmnorm\simeq\Z/2\Z$. Garibaldi computed the remaining cases when $H$ is \textit{exceptional}, i.e. when $H$ is of type $\mathsf G_2,\prescript{3}{}{\mathsf D}_4,\prescript{6}{}{\mathsf D}_4,\mathsf F_4,\mathsf E_6,\mathsf E_7$ or $\mathsf E_8$, and he showed in \cite{garibaldi2006unramified} that for $H$ a simple simply connected exceptional algebraic group, the group $\Inv^3_\nr(G,\QZ(2))_\rmnorm$ is $\Z/2\Z$ if $H$ is of type $\prescript{3}{}{\mathsf D}_4$ with a nontrivial Tits algebra; otherwise $\Inv^3_\nr(H,\QZ(2))_\rmnorm$ is trivial. It would be interesting to see if weak approximation could fail for $\SL_n/H$ in the case of a non-trivial $\Inv^3_\nr(H,\QZ(2))_\rmnorm$.

\bibliographystyle{alpha}
\bibliography{bibliographie}

\end{document}